\documentclass[production,11pt]{article}
\usepackage{amsmath}
\usepackage{amssymb}
\usepackage{amsthm}
\usepackage{amsfonts}
\usepackage{latexsym}

\theoremstyle{plain}
\newtheorem{thm}{Theorem}
\newtheorem{cor}{Corollary}[section]
\newtheorem{lem}{Lemma}[section]

\theoremstyle{definition}
\newtheorem{defn}{Definition}[section]

\theoremstyle{remark}

\newtheorem{rem}{Remark}[section]

\hyphenation{ma-ni-fold ma-ni-folds re-pre-sen-ta-tion ope-ra-tor
sa-ti-sfy-ing re-pre-sen-ta-tions mul-ti-pli-ci-ties va-lu-ed
com-pa-ti-ble po-la-ri-za-tion par-ti-cu-lar sti-mu-la-ting tri-vial
dif-fe-ren-tial va-ni-shing me-ta-li-near na-tu-ral-ly
e-qui-va-len-tly ge-ne-ra-li-ty na-tu-ral fa-mi-ly geo-me-tric
uni-ta-ri-ly e-qui-va-rian-tly nor-ma-li-zed}
\title{Asymptotics of Szeg\"{o} kernels\\ under Hamiltonian torus actions}
\author{Roberto Paoletti\footnote{\noindent{\bf Address:}
Dipartimento di Matematica e Applicazioni, Universit\`a degli Studi
di Milano Bicocca, Via R. Cozzi 53, 20125 Milano,
Italy; {\bf e-mail}: roberto.paoletti@unimib.it }}
\date{}

\begin{document}

\maketitle

\begin{abstract}
Let $X$ be the circle bundle associated to a positive line bundle on a
complex projective (or, more generally, compact symplectic)
manifold. The Tian-Zelditch expansion on $X$ may be seen as a local manifestation of the
decomposition of the (generalized) Hardy space $H(X)$ into isotypes for the $S^1$-action. More generally, given a compatible
action of a compact Lie group, and under general assumptions guaranteeing finite dimensionality of isotypes,
we may look for asymptotic expansions locally reflecting the equivariant decomposition of $H(X)$ over the irreducible
representations of the group. We focus here on the case of compact tori.
\end{abstract}

\noindent
\textbf{Keywords.} Tian-Zelditch expansion, scaling
asymptotics, positive line bundle, Hardy space, Szeg\"{o} kernel, Hamiltonian action.

\noindent
\textbf{AMS Subject Classification.} 53D05, 53D20, 53D50, 30H10, 32T15

\section{Introduction}

Let $M$ be a connected d-dimensional complex projective manifold, with an ample line bundle $A$
on it. Thus there exists an Hermitian metric $h$ on $A$, such that the curvature form
of the unique
connection $\nabla _A$ compatible with both the holomorphic structure and the metric is $\Theta=-2i\,\omega$, where
$\omega$ is a K\"{a}hler form;
we shall adopt the volume form $dV_M=(1/\mathrm{d}!)\,\omega^{\wedge \mathrm{d}}$
on $M$.
Each space of global holomorphic sections $H^0\left(M,A^{\otimes k}\right)$
is naturally an Hermitian vector space, and we may form the Hilbert
space direct sum $H(A)=:\bigoplus_{k\ge 0}H^0\left(M,A^{\otimes k}\right)$.

Now suppose that $\mu$ is an holomorphic
Hamiltonian action of the g-dimensional compact torus $\mathbb{T}=\mathbb{T}^\mathrm{g}$ on $(M,2\omega)$.
Assume, in addition, that $\mu$ can be linearized to an action $\widetilde{\mu}$ on $A$; after averaging,
we may suppose that $h$ and $\nabla_A$ are $\mathbb{T}$-invariant.  This lifting induces natural unitary representations
of $\mathbb{T}$ on $H^0\left(M,A^{\otimes k}\right)$, $k=0,1,2,\ldots$, and therefore on $H(A)$. Thus we can unitarily
and equivariantly decompose $H(A)$ over the irreducible representations of $\mathbb{T}$, which are of course just its characters:
\begin{equation}
\label{eqn:decomp-A}
H(A)=:\bigoplus _{\varpi \in \mathbb{Z}^{\mathrm{g}}}H_\varpi(A).
\end{equation}
For example, when $\mathbb{T}^1=S^1$ acts trivially on $M$, with constant moment map equal to $1$, we have
$H(A)_k=H^0\left(M,A^{\otimes k}\right)$. In general, however, $H(A)_\varpi$ needn't be contained in a space of
global sections, nor need it be finite-dimensional. Nonetheless, finite dimensionality
is ensured under the additional hypothesis that $\mathbf{0}\not\in \Phi(M)$, where $\Phi:M\rightarrow \mathfrak{t}^\vee$
is the moment map (here $\mathfrak{t}=\mathrm{Lie}(\mathbb{T})$).
The latter requirement may be seen as a sort of \lq homogeneous properness\rq,
since it implies that for any $\varpi\in \mathfrak{t}^\vee\setminus\{\mathbf{0}\}$ the inverse image in $M$
of the ray $\mathbb{R}_+\cdot \varpi$ is compact.

Under this assumption,
the orthogonal projector $P_\varpi:H(A)\rightarrow H(A)_\varpi$ is a smoothing operator; restricting to the diagonal,
we obtain a well-defined smooth function on $M$, which pictorially describes the \lq local contribution to $H(A)_\varpi$\rq.
In particular, a point-wise estimate of the latter leads to a global estimate on $\dim \big(H(A)_\varpi\big)$. In the classical
case of the standard circle action on $A$, thus with $\Phi=1$,
this point-wise estimate is the celebrated Tian-Zelditch expansion.

As we have remarked, in the latter basic case the underlying action on the base manifold is trivial, while the
lifted action on the line bundle is not; this accounts for the non-trivial equivariant decomposition
described by the TYZ expansion. A basic theme in geometric quantization of Hamiltonian group actions, that
we recall explicitly below, is that the lifted action is infinitesimally described by a combination of
the horizontal lift of the action on the base manifold, and of the structure circle action, with a weight
controlled by the moment map. In particular, different choices of $\Phi$ for the same $\mu$
determine different lifts $\widetilde{\mu}$, whence different unitary representations on $H(A)$.

The present
paper is devoted to the case of a general torus action.
One motivation is to provide some local counterpart
to the general philosophy of \cite{gs2}, where one considers the various possible reductions of a symplectic
cone (in particular, if $\Sigma$ is the symplectic cone sprayed by the connection, the base manifold $M$ is the
reduction associated to the standard circle action).

Let us clarify the issue by giving some explicit examples.

Consider first the unitary representation
$T^1\times \mathbb{C}^2\rightarrow \mathbb{C}^2$ given by $t\cdot (z_0,z_1)=:\left(t\,z_0,t^s\,z_1\right)$
for some integer $s\ge 1$.
This representation determines an action on $\mathbb{P}^1$ as well as a built-in linearization
to the hyperplane line bundle $A=\mathcal{O}_{\mathbb{P}^1}(1)$, associated to the moment map
$$
\Phi:\mathbb{P}^1\rightarrow \mathbb{R},\,\,
[z_0:z_1]\mapsto \frac{|z_0|^2+s\,|z_1|^2}{|z_0|^2+|z_1|^2};
$$
here the underlying K\"{a}hler structure on $\mathbb{P}^1$ is of course the Fubini-Study form.
In particular, $\Phi(\mathbb{P}^1)=[1,s]$.
The action on the dual $A^\vee$ is simply the induced action on the incidence correspondence
$\{(\mathbf{v},[\mathbf{z}]): \mathbf{v}\wedge \mathbf{z}=\mathbf{0}\}\subseteq \mathbb{C}^2\times \mathbb{P}^1$,
and it obviously preserves the unit sphere $X=S^3\subseteq \mathbb{C}^2$.

Clearly, $H(A)_k$ is the span of those monomials $Z_0^a\,Z_1^b$ with $a+b\,s=k$; if $k=s\,k_0+k_1$,
with $0\le k_1<s$, then $(a,b)=(k_1,k_0),\,(k_1+s,k_0-1),\ldots,(k,0)$; in particular, $\dim H_k(A)=k_0+1$.
In addition, if $a=k_1+js$ and $b=k_0-j$, then
$$
a+b=k_1+k_0+j\,(s-1);
$$
therefore, if $s\ge 2$ then no two of these generators are sections of $A^{\otimes l}$ for the same $l$,
so that $H(A)_k$ may not be interpreted as a space of sections of some power of $A$. If $s=1$, of course, we
fall back on the standard case of the structure circle action.

Next, we consider the unitary representation $\mathbb{T}^1\times \mathbb{C}^3\rightarrow \mathbb{C}^3$ given by, say,
$t\cdot (z_0,z_1,z_3)=:\left(t\,z_0,t^2\,z_1,\,t^3\,z_2\right)$. Again, this descends to an action on $\mathbb{P}^2$
with a built-in linearization to the hyperplane line bundle $A$, and the associated moment map $\Phi:\mathbb{P}^2\rightarrow
\mathbb{R}$ is
$$
\Phi\big([z_0:z_1:z_2]\big)=:\frac{|z_0|^2+2\,|z_1|^2+3\,|z_2|^2}{|z_0|^2+|z_1|^2+|z_2|^2}.
$$
In particular, $\Phi\left(\mathbb{P}^2\right)=[1,3]$.
Now $H_k(A)$ is the span of those monomials $Z_0^a\,Z_1^b\,Z_2^c$ such that
$a+2\,b+\,3c=k$. Thus we need to have $b\le \lfloor k/2\rfloor$, $c\le \lfloor (k-2b)/3\rfloor$ and then
$a=k-2b-3c$. It follows that $\dim H_k(A)=O\left(k^2\right)$ as $k\rightarrow +\infty$. Again, $a+b+c$ is not constant
over the set of these monomials, so that $H(A)_k$ is not a space of sections of some power of $A$.

When a linearized Hamiltonian action leaves invariant a projective submanifold of the base, there is a linearized
action induced by restriction; this is Hamiltonian with respect to the restriction of the K\"{a}hler
structure on the ambient space.

For example, the action on $\mathbb{P}^2$
induced by the previous representation
leaves invariant the smooth conic $S\,: \,X_1^2-X_0\,X_2=0$,
and therefore there is an induced linearization to the restricted hyperplane line bundle on $S$; the moment map
on $S$ is simply the restricted moment map and therefore it is positive. Now as a projective manifold
$S$ is isomorphic to $\mathbb{P}^1$ under the
Veronese embedding $\nu_{1,2}:\mathbb{P}^1\rightarrow \mathbb{P}^2$, $[u:s]\mapsto \left[u^2:u\,s:s^2\right]$, and
the induced action on $\mathbb{P}^1$ is given by $t\cdot [u:s]=\left[t^{1/2}u:t^{3/2}s\right]$;
the action is well-defined, although $t^{1/2}$ is not. Under this isomorphism, the pull-back
$A_S=\nu_{1,2}^*(A)\cong \mathcal{O}_{\mathbb{P}^1}(2)$,
and the moment map for this linearized action is
$$
\Phi_S=:\Phi\circ \nu_{1,2}:\mathbb{P}^1\rightarrow \mathbb{R},\,\,\,\,\,\,[u:s]\mapsto
\frac{|u|^4+2\,|u|^2\,|s|^2+3\,|s|^4}{|u|^4+|u|^2\,|s|^2+|s|^4}.
$$
Clearly, $\Phi_S\left(\mathbb{P}^1\right)=[1,3]$.

Under the Veronese embedding,  $H(A_S)_k$ may be identified with the span of all monomials
$Z_0^a\,Z_1^b$ subject to the conditions $a+b=2r$ and $r+b=k$ for some integer $r$.
It follows that $0\le b\le \lfloor 2k/3\rfloor$, $a=2k-3b$; hence $\dim H(A_S)_k= \lfloor 2k/3\rfloor$.
In particular, $a+b=2\,(k-b)$ is different for all these generators, so that $H(A_S)_k$
picks up 1-dimensional contributions from $H^0\left(\mathbb{P}^1,\mathcal{O}_{\mathbb{P}^1}(2\,l)\right)$ for
$(k/3)-1\le l\le k$.

Obviously, these considerations may be generalized to any unitary representation of $\mathbb{T}^1$ on $\mathbb{C}^{\mathrm{d}+1}$
of the form $t\cdot (z_0,\ldots,z_\mathrm{d})=:\left(t^{\ell_0}\,z_0,\ldots,t^{\ell_\mathrm{d}}\,z_\mathrm{d}\right)$, where
$\ell _j>0$ for every $j$;
the moment map has image the interval $[l,L]$, where
$l=\min (\ell_j)$, $L=\max (\ell_j)$. They all determine linearized actions on the polarized pair
$\left(\mathbb{P}^\mathrm{d},\mathcal{O}_{\mathbb{P}^\mathrm{d}}(1)\right)$, as well as on any invariant submanifold
polarized by the restriction of the hyperplane bundle.

For instance, $t\cdot (z_0,z_1,z_2,z_3)=:\left(t\,z_0,t^2\,z_1,t^3\,z_2,t^4\,z_3\right)$ induces an
action on $\mathbb{P}^3$
that leaves invariant the quadric defined by $X_1\,X_2-X_0\,X_3=0$, which is isomorphic to $\mathbb{P}^1\times
\mathbb{P}^1$ under the Segre embedding. The corresponding linearization then also restricts.
On the other hand,  the action on $\mathbb{P}^4$
induced by $t\cdot (z_0,z_1,z_2,z_4)=:\left(t\,z_0,t^2\,z_1,t^3\,z_2,t^4\,z_3,t^5\,z_4\right)$
leaves invariant the smooth quadric hypersurface defined by $X_0\,X_4+X_1\,X_3+X_2^2=0$.

Noteworthy is the example associated to
$$t\cdot (z_0,z_1,z_2,z_3,z_4,z_5)=:\left(t\,z_0,t^2\,z_1,t^3\,z_2,t^3\,z_3,t^4\,z_4,t^5\,z_5\right),
$$
which may
be restricted to the invariant Klein quadric $X_0\,X_5-X_1\,X_4+X_2\,X_3=0$ in $\mathbb{P}^5$.
Under the Pl\"{u}cker embedding, the latter is
isomorphic to the Grassmanian $G(2,4)$ of two-dimensional vector subspaces in $\mathbb{C}^4$, and with this identification
the restricted action on $G(2,4)$ is the one associated to the unitary action
$t\cdot (x,y,z,u)\mapsto \left(x,t\,y,t^2\,z,t^3\,u\right)$ on $\mathbb{C}^4$. In terms of Pl\"{u}cker
coordinates, the moment map for the linearized action pulled back to $G(2,4)$ is
$$
\Phi_{G(2,4)}=\frac{|p_{01}|^2+2\,|p_{02}|^2+3\,|p_{03}|^2+3\,|p_{12}|^2+4\,|p_{13}|^2+5\,|p_{23}|^2}
{|p_{01}|^2+|p_{02}|^2+|p_{03}|^2+|p_{12}|^2+|p_{13}|^2+|p_{23}|^2}.
$$

These examples may be generalized to higher dimensions of the group as well as of
the manifold $M$. Let us consider a couple of cases with $\mathrm{g}=2$.

Consider first the linearized action of $\mathbb{T}^2=S^1\times S^1$ on
$\left(\mathbb{P}^1,\mathcal{O}_{\mathbb{P}^1}(1)\right)$ induced by
the unitary representation
$(t,s)\cdot (z_0,z_1)=(t\,z_0,s\,z_1)$. The moment map $\Phi:\mathbb{P}^1\rightarrow \mathbb{R}^2$
is
$$\Phi\big([z_0:z_1]\big)=:\left(\frac{|z_0|^2}{|z_0|^2+|z_1|^2},\frac{|z_1|^2}{|z_0|^2+|z_1|^2}\right),$$
and is therefore never zero; the image is the segment $\big[(1,0),(0,1)\big]$ on the line $x+y=1$.
Choose $\varpi=(\varpi_0,\varpi_1)\in \mathbb{Z}^2\setminus \{(0,0)\}$; with
$A=\mathcal{O}_{\mathbb{P}^1}(1)$, we see that $H(A)_{k\varpi}$ is the span of $Z_0^{k\varpi_1}\,Z_1^{k\varpi_2}$
if $\varpi_0,\varpi_1\ge 0$, the null space otherwise.
Thus, in this case
$H(A)_{k\varpi}\subseteq H^0\left(\mathbb{P}^1,\mathcal{O}_{\mathbb{P}^1}\big(k(\varpi_1+\varpi_2)\big)\right)$.
Introducing the appropriate normalization constant, the orthogonal projector onto $H(A)_{k\varpi}$ has kernel
$$
\widetilde{\Pi}_{k\varpi}(Z,W)=:\frac{\big(k\,|\varpi|+1)!}{\pi\,(k\varpi_1)!\,(k\varpi_2)!}\cdot
(Z_0\,\overline{W}_0)^{k\varpi_1}\cdot (Z_1\,\overline{W}_1)^{k\varpi_2},
$$
where $|\varpi|=|\varpi_0|+|\varpi_1|$.
Suppose $\varpi_0,\varpi_1> 0$, so that the ray $\mathbb{R}_+\varpi$ meets
$\Phi\left(\mathbb{P}^1\right)$ transversely. Using the Stirling formula, we get
\begin{eqnarray*}
\widetilde{\Pi}_{k\varpi}(Z,Z)&=&\frac{\big(k\,|\varpi|+1)!}{\pi\,(k\varpi_1)!\,(k\varpi_2)!}\cdot
|Z_0|^{2k\varpi_0}\cdot |Z_1|^{2k\varpi_1}\\
&\sim& c_\varpi\,\sqrt{\frac{|\varpi|}{|\varpi_0|\,|\varpi_1|}}\cdot k^{1/2}\,
\left(\frac{|\varpi|}{\varpi_0}\,|Z_0|^2\right)^{k\varpi_0}\,\left(\frac{|\varpi|}{\varpi_1}\,|Z_1|^2\right)^{k\varpi_1}.
\end{eqnarray*}
We can view $\widetilde{\Pi}_{k\varpi}(Z,Z)$
as a function of $[z_0:z_1]\in\mathbb{P}^1$ by restricting it to the unit sphere
$S^3\subseteq \mathbb{C}^2$; then it is $O\left(k^{-\infty}\right)$ unless
$$\Phi\big([z_0:z_1]\big)=\big(\varpi_0/|\varpi|,\varpi_1/|\varpi|\big),$$
that is, unless
$\Phi\big([z_0:z_1]\big)\in \mathbb{R}_+\varpi$. In the latter case, on the other hand, it is
$O\left(k^{1/2}\right)$.

The unitary representation $(t,s)\cdot (z_0,z_1,z_2)=\big(t\,z_0,s\,z_1,ts\,z_2\big)$
of $\mathbb{T}^2$ on $\mathbb{C}^3$ descends to a linearized Hamiltonian action on
$\left(\mathbb{P}^2,\mathcal{O}_{\mathbb{P}^2}(1)\right)$, with moment map
$$\Phi\big([z_0:z_1:z_2]\big)=:\left(\frac{|z_0|^2+|z_2|^2}{|z_0|^2+|z_1|^2+|z_2|^2},
\frac{|z_1|^2+|z_2|^2}{|z_0|^2+|z_1|^2+|z_2|^2}\right).
$$
Again, $\Phi$ is never vanishing, and its image is the full triangle with vertexes
$(1,0),\,(1,1),\,(0,1)$.

If $\varpi=(\varpi_0,\varpi_1)\in \mathbb{Z}^2\setminus \{(0,0)\}$, then
$H(A)_{k\varpi}$ is the span of those monomials
$Z_0^a\,Z_1^b\,Z_2^c$ for which $a+c=k\varpi_0$, $b+c=k\varpi_1$ if $\varpi_0,\varpi_1\ge 0$, the null space otherwise.
Supposing, say, $\varpi_1\le \varpi_0$ we may take $k\,\varpi_0\ge a\ge k\,(\varpi_0-\varpi_1)$,
$c=k\,\varpi_0-a$, $b=a-k\,(\varpi_0-\varpi_1)$. Thus, $\dim H(A)_k=1+k\,\varpi_1=O(k)$ if $\varpi_1>0$.
Furthermore, $a+b+c=a+k\,\varpi_1$, so that $H(A)_{k\varpi}$ picks up one-dimensional contributions
from $H^0\left(\mathbb{P}^2, \mathcal{O}_{\mathbb{P}^2}(l)\right)$ for any
$l=k\varpi_0,\ldots,k\,|\varpi|$.

Let us return to the general problem in point.

Tian-Zelditch expansions appeared in  \cite{t}, \cite{z}, \cite{c},
and have since been studied extensively by many authors; as is well-known,
there exist to date various approaches to the asymptotics for the Bergman-Szeg\"{o} kernel of a
positive line bundle on a complex projective (or, more generally, compact symplectic) manifold.
In this paper, we shall specifically
build on ideas and techniques from \cite{z}, \cite{bsz}, \cite{sz},
based on the theory of \cite{bs}.
The same approach was applied to scaling asymptotics of Szeg\"{o} kernels in the presence
of Hamiltonian symmetries in
\cite{p-jsg}, but from a different perspective (see the discussion at the end of this
introduction).

Thus, following \cite{z}, we shall lift the analysis to the unit circle bundle $X\subseteq A^\vee$, where
$A^{\vee}=A^{-1}$ is the dual (or inverse) line bundle, and work with the Hardy space $H(X)$.
This point of view
seems quite
intrinsic to our problem, since the equivariant spaces in point are generally not spaces of sections
of powers of $A$.

Now $X$ is a principal $S^1$-bundle on $M$, with projection
$\pi:X\rightarrow M$ (we shall generally denote the circle by $S^1$ when it acts on $X$ in the standard manner, and
by $\mathbb{T}^1$ when it acts by $\widetilde{\mu}$);
for example, when $(M,A)=\left(\mathbb{P}^\mathrm{d},\mathcal{O}_{\mathbb{P}^\mathrm{d}}(1)\right)$ and $\omega$
is the Fubini-Study form, $X$ may be identified with the unit sphere $S^{2\mathrm{d}+1}\subseteq \mathbb{C}^{\mathrm{d}}$,
and $\pi$ with the Hopf map.

If $\alpha$ is the normalized connection form on $X$, then $(X,\alpha)$ is a contact manifold and $dV_X=:(1/2\pi)\,\alpha\wedge
\pi^*(dV_M)$ is a volume form on $X$.
In the case of projective space, the contact structure on $S^{2\mathrm{d}+1}$
is the standard one that it inherits as the boundary of a strictly pseudoconvex domain of $\mathbb{C}^{\mathrm{d}+1}$;
similarly, in the general case $X$ is the boundary a strictly pseudoconvex domanin in $A^\vee$,
given by the unit disc bundle.
As is well-known, if $H_k(X)$ is the $k$-th isotype of the Hardy space
 $H(X)\subseteq L^2(X)$ under the bundle $S^1$-action,
 there is a natural unitary isomorphism $H_k(X)\cong H^0\left(M,A^{\otimes k}\right)$
for every $k$; thus $H(X)\cong H(A)$.
If $\Pi_k \in \mathcal{C}^\infty(X\times X)$ is the level-$k$ Szeg\"{o} kernel, that is, the distributional kernel of the
orthogonal projector $L^2(X)\rightarrow H_k(X)$, the \lq classical\rq \,TYZ expansion states that as $k\rightarrow +\infty$
$$
\Pi_k(x,x)\sim \left(\frac k\pi\right)^\mathrm{d}\cdot \left (1+\sum_{j=1}^{+\infty}a_j(m)\,k^{-j}\right),
$$
where $m=\pi(x)$ and each $a_j$ is a differential polynomial in the metric.

This representation-theoretical description may be extended to a general
$\widetilde{\mu}$, as follows.
Since $h$ is $\widetilde{\mu}$-invariant, so is $X$; hence $\mathbb{T}$ acts on it as a group of contactomorphisms, and the ensuing
unitary representation on $L^2(X)$ preserves $H(X)$.
We shall write the counterpart of (\ref{eqn:decomp-A}) as
\begin{equation}
\label{eqn:decomp-X}
H(X)=:\bigoplus _{\varpi \in \mathbb{Z}^{\mathrm{g}}}\widetilde{H}_\varpi(X),
\end{equation}
where obviously $\widetilde{H}_\varpi(X)\cong H_\varpi(A)$. For the bundle circle action
$\widetilde{H}_k(X)= H_k(X)$, but for a general $\mathbb{T}^1$-action these are different subspaces; actually, in general
$\widetilde{H}_\varpi(X)\cap H_k(X)\neq \{0\}$ for some fixed $\varpi$ and several $k$'s, and so $\widetilde{H}_\varpi(X)$
is not (isomorphic to) a space of global sections of some power of $A$. Now for any
$\varpi\in \mathbb{Z}^\mathrm{g}$ we may consider the
level-$\varpi$ Szeg\"{o} kernel $\widetilde{\Pi}_\varpi\in \mathcal{D}'(X\times X)$, that is, the
distributional kernel of the orthogonal projector $L^2(X)\rightarrow \widetilde{H}_\varpi (X)$; if $\mathbf{0}\not\in \Phi(M)$,
$\widetilde{\Pi}_\varpi$ is smooth, and its diagonal restriction is well-defined on $M$ (the given choices provide
natural identifications between densities, half-densities and functions). The modified Tian-Zelditch expansion we are aiming at is
an asymptotic expansion for $\widetilde{\Pi}_{k\varpi}(x,x)$ as $k\rightarrow +\infty$.

As we have remarked, symplectically
the requirement that $\mathbf{0}\not\in \Phi(M)$ is a form of properness of the moment map;
analytically, it is really an ellipticity condition on the action, so that these actions
might be reasonably called \lq elliptic\rq.
To see this, let $\mathfrak{t}$ be the Lie algebra of $\mathbb{T}^\mathrm{g}$,
and for any $\xi\in \mathfrak{t}$ of $\mathbb{T}^\mathrm{g}$
let $\xi_M$ and $\xi_X$ be the smooth vectors induced by $\xi$ on $M$ and $X$ under $\mu$
and $\widetilde{\mu}$, respectively.
In particular, in standard notation we shall write
$\partial/\partial\theta$ for the generator of the structure circle action on $X$.
Then the relation between $\xi_X$ and $\xi_M$ is expressed by the relation
$\xi_{X}=\xi_{M}^{\sharp}-\langle \Phi,\xi\rangle\cdot \partial/\partial\theta$,
where the suffix $\sharp$ denotes the horizontal lifting for the connection.
Now let $(\xi_1,\ldots,\xi_\mathrm{g})$
be a basis of the Lie algebra,
and let $\overline{\partial} _b$ be the CR operator on $X$.
Then imposing $\mathbf{0}\not\in \Phi(M)$ amounts to requiring that
$(\overline{\partial} _b,\xi_{1X},\ldots,\xi_{\mathrm{g}X})$ be jointly elliptic.
The same holds of $\big(\overline{\partial} _b,\xi_{1X}-\langle\varpi,\xi_1\rangle,\ldots,
\xi_{\mathrm{g}X}-\langle\varpi,\xi_{\mathrm{g}}\rangle \big)$ for any
$\varpi\in \mathfrak{t}^\vee$ (the Lie coalgebra);
finite dimensionality of isotypes follows from this and Theorem  19.5.1 of \cite{hor-III}.
Nonetheless, a direct elementary symplectic proof will be given in \S 2,
based on the theory
of \cite{gs1} (this applies to non-toric actions as well).

For ease of exposition, we shall consider circle actions separately. In this case, we shall provide
scaling asymptotics akin to those in \cite{sz}. Thus, we are dealing for the
time being with a Hamiltonian action $\mu:\mathbb{T}^1\times M\rightarrow M$ on $(M,2\omega)$,
with (say) moment map $\Phi>0$, and a linearization $\widetilde{\mu}:\mathbb{T}^1\times A\rightarrow A$;
$\widetilde{\mu}$ will also be the contact action on $X$, and we write $\widetilde{\mu}_t(x)$ for $\widetilde{\mu}(t,x)$.
Also, for any $k\in \mathbb{Z}$ we set
\begin{equation}
\label{eqn:equivariant-piece}
\widetilde{H}_k(X)=\left\{s\in H(X)\,:\,s\left(\widetilde{\mu}_{t^{-1}}(x)\right)=t^k\,s(x)\,\forall\,t\in \mathbb{T}^1,
\,x\in X\right\},
\end{equation}
and $\widetilde{\Pi}_k$ is the (smooth) Schwartz kernel of the orthogonal projector $L^2(X)\rightarrow \widetilde{H}_k(X)$.

Since $\Phi>0$, $\widetilde{\mu}$ is locally free on $X$, hence
the stabilizer subgroup $T_x\subseteq \mathbb{T}^1$
of any $x\in X$ is finite; as it depends only on $m=\pi(x)$, we shall write $T_m$ for $T_x$.
In fact, if $\xi_M$ and $\xi_X$ are the vector fields on $M$ and $X$ induced by $\mu$ and $\widetilde{\mu}$,
respectively, then in Heisenberg local coordinates (\S \ref{subsect:heis})
$\xi_X(x)=\big(-\Phi(m),\xi_M(m)\big)$ if $m=\pi(x)$.
The tangent space to the $\widetilde{\mu}$-orbit through $x$ is
$T_x\left(\mathbb{T}^1\cdot x\right)=\mathrm{span}\big\{\xi_X(x)\big\}$.
Let $\xi_X(x)^\perp\subseteq T_xX$ be the orthocomplement to $\xi_X(x)$.

For the next definition, recall that the connection yields at any $x\in X$ a built-in unitary isomorphism
$T_xX\cong \mathbb{R}\times T_mM$, so that any $\upsilon\in T_xX$ can be intrinsically
decomposed as $\upsilon=(\theta,\mathbf{v})$.

\begin{rem}
Suppose $x\in X$, $m=\pi(x)$, and $t\in T_m=T_x$ (the stabilizer of $x$ in $\mathbb{T}^1$ under $\widetilde{\mu}$).
If $\upsilon=(\theta,\mathbf{v})\in T_xX$,
then $d_x\widetilde{\mu}_t(\upsilon)=\big(\theta,d_m\mu_t(\mathbf{v})\big)$ (Lemma \ref{lem:heis-tg}).
\end{rem}

\begin{defn}
Let $E:TX\oplus TX\rightarrow \mathbb{C}$ be defined as follows: if $x\in X$ and
$\upsilon_j=(\theta_j,\mathbf{v}_j)\in T_xX$,
$j=1,2$, then
\begin{eqnarray*}
E(\upsilon_1,\upsilon_2)
&=:&\frac{1}{\Phi(m)}\left\{i\,\left[\frac{\theta_2-\theta_1}{\Phi(m)}\,\omega_m\big(\xi_M(m),\mathbf{v}_1+\mathbf{v}_2\big)
-
\omega_m\left(\mathbf{v}_1,\mathbf{v}_2\right)\right]\right.\\
&&\left.-\frac 12\,\left\|\left(\mathbf{v}_1-\mathbf{v}_2\right)
-\frac{\theta_2-\theta_1}{\Phi(m)}\, \xi_M(m)\right\|^2\right\}.
\end{eqnarray*}
\end{defn}

\begin{rem}
If $(\theta_2-\theta_1)\,\xi_M(m)=\mathbf{0}$, then $E(\upsilon_1,\upsilon_2)=\psi_2(\mathbf{v}_1,\mathbf{v}_2)/\Phi(m)$,
where $\psi_2$ is the invariant introduced in \cite{bsz} and \S 3 of \cite{sz} to describe the universality of
the leading scaling asymptotics of Szeg\"{o} kernels.

We have $\Re \big(E(\upsilon_1,\upsilon_2)\big)\le 0$ for any $\upsilon_j$, and
$\Re \big(E(\upsilon_1,\upsilon_2)\big)= 0$ if and only if $\upsilon_1-\upsilon_2\in \mathrm{span}\big(\xi_X(x)\big)$.
\end{rem}

\begin{thm}
\label{thm:circle-case}
Let $M$ be a connected d-dimensional complex projective manifold, and $(A,h)$ an Hermitian ample line bundle on it;
suppose that the curvature of the unique compatible connection is $\Theta=-2i\omega$, where $\omega$ is
K\"{a}hler.
Let $\mu:\mathbb{T}^1\times M\rightarrow M$ be an holomorphic Hamiltonian action on $(M,2\omega)$ with moment map
$\Phi>0$, admitting the linearization $\widetilde{\mu}:\mathbb{T}^1\times A\rightarrow A$; assume that
$h$ is $\widetilde{\mu}$-invariant.
Then:

\begin{enumerate}
  \item $\widetilde{\Pi}_k=0$ for any $k\le 0$.
  \item For any $\epsilon,\,C>0$, uniformly for
  $\mathrm{dist}_X\left(\mathbb{T}^1\cdot x,\mathbb{T}^1\cdot y\right)\ge Ck^{\epsilon-\frac 12}$ we have
  $\widetilde{\Pi}_k(x,y)=O\left(k^{-\infty}\right)$ as $k\rightarrow +\infty$; here $\mathbb{T}^1\cdot x$
  is the $\widetilde{\mu}$-orbit of $x\in X$.
  \item Uniformly in $x\in X$ and
in
$\upsilon_l=(\theta_l,\mathbf{v}_l)\in T_xX\cong\mathbb{R}\times T_mM$
satisfying
\begin{center}
$\upsilon_l\in \xi_X(x)^\perp$ and
$\big\|\upsilon_l\big\|\le C\,k^{1/9}$,
\end{center}
as $k\rightarrow +\infty$ we have
\begin{eqnarray*}
\lefteqn{\widetilde{\Pi}_k\left(x+\frac{\upsilon_1}{\sqrt{k}},
x+\frac{\upsilon_2}{\sqrt{k}}\right)\sim \left(\frac k\pi\right)^\mathrm{d}\,\Phi(m)^{-(\mathrm{d}+1)}
\,e^{i\sqrt{k}\,(\theta_1-\theta_2)/\Phi(m)}}\\
&&\cdot
\left(\sum_{t\in T_m}t^{k}\,e^{E\big(d_x\widetilde{\mu}_{t^{-1}}(\upsilon_1),\upsilon_2\big)}\right)\cdot
\left(1+\sum _{j\ge 1}R_j(m,\upsilon_1,\upsilon_2)\,k^{-j/2}\right),
\end{eqnarray*}
for certain smooth functions $R_j$, polynomal in the $\upsilon_l$'s.
\end{enumerate}
\end{thm}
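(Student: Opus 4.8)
The plan is to follow the microlocal strategy of Boutet de Monvel--Sj\"{o}strand as developed in \cite{z}, \cite{bsz}, \cite{sz}, but with the equivariant projector realized as a group average. First I would write the level-$k$ equivariant Szeg\"{o} projector as
\[
\widetilde{\Pi}_k(x,y)=\frac{1}{2\pi}\int_{\mathbb{T}^1}t^{-k}\,\Pi\big(\widetilde{\mu}_{t^{-1}}(x),y\big)\,dt,
\]
where $\Pi$ is the full Szeg\"{o} kernel on $X$, and insert the Boutet de Monvel--Sj\"{o}strand description of $\Pi$ as a Fourier integral operator with complex phase, $\Pi(x,y)=\int_0^{+\infty}e^{iu\,\psi(x,y)}\,s(x,y,u)\,du$ (modulo smoothing), with $\Im\psi\ge0$, $\psi$ vanishing to second order on the diagonal, and $s$ a classical symbol of order $\mathrm{d}$. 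Rescaling $u\mapsto ku$ turns this into a phase $ik\big(u\,\psi(\widetilde{\mu}_{t^{-1}}(x),y)+i(\log t)/i\big)$-type integral in $(t,u)$, i.e.\ into a stationary-phase problem in the two real variables $(t,u)$ after the substitution $x\mapsto x+\upsilon_1/\sqrt k$, $y\mapsto y+\upsilon_2/\sqrt k$.

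Next I would carry out the localization in $t$. Because $\Phi>0$ forces $\widetilde{\mu}$ to be locally free, the critical points of the $(t,u)$-phase occur where $\widetilde{\mu}_{t^{-1}}(x)$ and $y$ lie on the same fiber with $u\,\partial_u\psi$ matching the weight $k$; non-stationary-phase (integration by parts in $t$ and $u$, using that $\psi$ controls the distance to the diagonal) gives the $O(k^{-\infty})$ decay of part (2) once $\mathrm{dist}_X(\mathbb{T}^1\cdot x,\mathbb{T}^1\cdot y)\gg k^{\epsilon-1/2}$, and reduces the analysis to a neighborhood of the finitely many $t$ with $\widetilde{\mu}_{t^{-1}}(x)$ on the $S^1$-fiber over $\pi(y)$; modulo the structure circle action these are indexed precisely by $t\in T_m$. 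Positivity of $\Phi$ also gives part (1): $\widetilde{H}_k(X)=0$ for $k\le0$ because on each $S^1$-fiber the weight of $\widetilde{\mu}$ relative to $\partial/\partial\theta$ is $-\Phi(m)<0$, so no holomorphic section can transform with a non-positive $\mathbb{T}^1$-weight. The cleanest way to see this is the joint-ellipticity/positivity argument already sketched in the introduction, or directly from $\xi_X=\xi_M^\sharp-\langle\Phi,\xi\rangle\,\partial/\partial\theta$.

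For part (3), near each $t_0\in T_m$ I would put everything in Heisenberg local coordinates about $x$, use the Remark that $d_x\widetilde{\mu}_{t_0}(\theta,\mathbf v)=(\theta,d_m\mu_{t_0}(\mathbf v))$, and expand the scaled phase $k\,u\,\psi$ together with the amplitude to the needed order in $k^{-1/2}$; the universal second-order part of $\psi$ reproduces exactly the quantity $\psi_2$ of \cite{bsz}, \cite{sz}, and after the $\frac1{\Phi(m)}$-rescaling coming from $\xi_X(x)=(-\Phi(m),\xi_M(m))$ it becomes the exponent $E$ of the Definition, evaluated at $\big(d_x\widetilde{\mu}_{t_0^{-1}}(\upsilon_1),\upsilon_2\big)$; the factor $t_0^{k}$ is the value of $t^{-k}$ at the critical point, the oscillatory factor $e^{i\sqrt k(\theta_1-\theta_2)/\Phi(m)}$ comes from the linear term in the fiber directions, and $\Phi(m)^{-(\mathrm d+1)}$ from the Hessian determinant in $(t,u)$ together with the leading symbol. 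Summing the contributions over $t_0\in T_m$ produces the bracketed sum. The growth condition $\|\upsilon_l\|\le Ck^{1/9}$ is the standard range in which the remainder terms of the stationary-phase expansion, which are polynomial in $\upsilon_l$ of controlled degree times $k^{-j/2}$, remain genuinely lower order (the exponent $1/9$ is what the cubic-and-higher Taylor remainders of $\psi$ and of the amplitude force; cf.\ \cite{sz}).

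The main obstacle I expect is precisely the uniform control of these remainders over the expanding polydisc $\|\upsilon_l\|\le Ck^{1/9}$ while simultaneously performing the two-dimensional stationary phase in $(t,u)$ near each $t_0\in T_m$: one must show that the Taylor remainders of the phase $u\,\psi$ and of the symbol, after the $k^{-1/2}$-rescaling of the spatial variables and the $k$-rescaling of $u$, can be organized into a genuine asymptotic expansion with coefficients that are polynomial in $\upsilon_l$ and uniformly bounded in $m$. This is where the compactness of $X$ (equivalently, the properness encoded by $\mathbf 0\notin\Phi(M)$) and the real-analytic/$\mathcal C^\infty$ nature of the Boutet de Monvel--Sj\"{o}strand phase are used in an essential way, and where the bulk of the technical work lies; the algebra identifying the leading term with $E$ and with $\Phi(m)^{-(\mathrm d+1)}$ is then a finite computation in Heisenberg coordinates.
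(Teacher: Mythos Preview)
Your proposal is correct and follows essentially the same route as the paper: express $\widetilde{\Pi}_k$ as the Fourier coefficient of $\Pi\big(\widetilde{\mu}_{-\vartheta}(x_{1k}),x_{2k}\big)$, insert the Boutet de Monvel--Sj\"{o}strand FIO parametrix, localize to the finitely many stabilizer points $t\in T_m$, rescale, and apply stationary phase in the pair $(t,\vartheta)$ using Heisenberg local coordinates to identify the leading exponent with $E$. The only execution detail you do not make explicit is that, after the rescaling $\eta\mapsto\eta/\sqrt{k}$ near each stabilizer, the paper separates out a \emph{real} phase $\Upsilon(t,\eta)=t\big(\eta\,\Phi(m)+\theta_1-\theta_2\big)-\eta$ with large parameter $\sqrt{k}$ and pushes the complex second-order terms (including $\psi_2$) into a Gaussian-type amplitude $e^{G}$, which is what makes the remainder bookkeeping on the expanding range $\|\upsilon_l\|\le Ck^{1/9}$ clean; this is exactly the ``main obstacle'' you flag, and the paper resolves it by that decomposition together with a $k^{7/18}$-scale cut-off in $\eta$.
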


The condition $(\theta_l,\mathbf{v}_l)\in \xi_X(x)^\perp$ could be replaced by the condition
$(\theta_l,\mathbf{v}_l)\in W$, where $W\subset T_xX$ is any fixed vector subspace with
$\xi_X(x)\not\in W$.

Actually, if $\upsilon_1=\upsilon_2=\mathbf{0}$ an asymptotic expansion in descending powers
of $k$ holds (rather than $k^{1/2}$); this can be seen either by modifying the proof of Theorem \ref{thm:circle-case}, or else by
applying Theorem \ref{thm:higher-dim-case} below. We state this as a

\begin{cor}
\label{cor:point-wise-circle}
Under the assumptions of Theorem \ref{thm:circle-case}, for any $x\in X$ the following
asymptotic expansion holds as $k\rightarrow +\infty$:
\begin{equation*}
\widetilde{\Pi}_k(x,x)\sim \left(\frac k\pi\right)^{\mathrm{d}}\cdot \Phi(m)^{-(\mathrm{d}+1)}\,
\sum _{g\in T_m}g^k \cdot \left(1+\sum_{l\ge 1}k^{-l}B_l(m)\right),
\end{equation*}
where $m=\pi(x)$, and each $B_l$ is a smooth function on $M$.
\end{cor}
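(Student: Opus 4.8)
The plan is to obtain Corollary \ref{cor:point-wise-circle} as the diagonal specialization of Theorem \ref{thm:circle-case}, with the improvement from half-integer to integer powers of $k$ coming from the absence of the odd-order terms when $\upsilon_1=\upsilon_2=\mathbf{0}$. First I would set $\upsilon_1=\upsilon_2=\mathbf{0}$ directly in part (3) of Theorem \ref{thm:circle-case}. The prefactor $e^{i\sqrt{k}(\theta_1-\theta_2)/\Phi(m)}$ becomes $1$, and for each $t\in T_m$ the exponent $E\big(d_x\widetilde{\mu}_{t^{-1}}(\mathbf{0}),\mathbf{0}\big)=E(\mathbf{0},\mathbf{0})=0$, so the sum over the stabilizer collapses to $\sum_{t\in T_m}t^k$. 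This already yields the leading term $(k/\pi)^{\mathrm{d}}\,\Phi(m)^{-(\mathrm{d}+1)}\sum_{g\in T_m}g^k$ and an expansion in powers $k^{-j/2}$ with coefficients $R_j(m,\mathbf{0},\mathbf{0})$; what remains is to argue that the odd coefficients $R_{2l+1}(m,\mathbf{0},\mathbf{0})$ all vanish, so that only integer powers survive and we may set $B_l(m)=R_{2l}(m,\mathbf{0},\mathbf{0})$.

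To kill the odd terms I would revisit the stationary phase / oscillatory integral representation of $\widetilde{\Pi}_k(x,x)$ that underlies the proof of Theorem \ref{thm:circle-case} — following the scheme of \cite{z}, \cite{bsz}, \cite{sz}, one writes $\widetilde{\Pi}_k(x,y)$ as an integral over $S^1\times\mathbb{R}_+$ (or over the group times a radial variable) of the global Szeg\"{o} kernel composed with the $\widetilde{\mu}$-action, localized near the stationary set by the microlocal description of \cite{bs}. On the diagonal with $\upsilon_1=\upsilon_2=\mathbf{0}$ the critical point is nondegenerate, isolated modulo the stabilizer, and — this is the key structural point — the phase function and amplitude have a parity/reality symmetry at the critical point: the Hessian is real and the half-density amplitude, expanded in the rescaled variables, contains only even-order contributions after integrating out the fiber variable, so the stationary phase expansion of Hörmander type produces only integer powers of $k^{-1}$ beyond the leading $k^{\mathrm d}$. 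Concretely I would track, term by term, that each would-be half-integer coefficient is an integral over an odd number of Gaussian-weighted monomials in the fiber directions and hence vanishes by symmetry; this is exactly the mechanism by which the classical TYZ expansion on the diagonal is in $k^{-1}$ rather than $k^{-1/2}$, and the torus-equivariant modification does not disturb it because each stabilizer term $t\in T_m$ contributes its own such integral with the same parity.

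The alternative route, which I would mention as a shortcut, is to deduce the statement from Theorem \ref{thm:higher-dim-case} (the general torus case), applied to the group $\mathbb{T}^1$: that theorem is presumably stated directly in descending integer powers of $k$ on the diagonal, with leading term $(k/\pi)^{\mathrm d}\,\Phi(m)^{-(\mathrm d+1)}\sum_{g\in T_m}g^k$, so Corollary \ref{cor:point-wise-circle} is literally its specialization to $\mathrm{g}=1$ and $\varpi$ a generator. Either way, the smoothness of the $B_l$ on $M$ follows because the $R_j$ (resp.\ the coefficients from Theorem \ref{thm:higher-dim-case}) are smooth, and because $T_m$ is locally constant on the complement of the (closed, lower-dimensional) locus where the stabilizer jumps — on a neighborhood of any point $m$ the sum $\sum_{g\in T_m}g^k$ is governed by the locally constant group $T_m$, and the $B_l$ patch together to a global smooth function.

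The main obstacle I expect is the vanishing of the odd-order coefficients: it is not a formal consequence of part (3) of Theorem \ref{thm:circle-case} as stated (which only gives a $k^{1/2}$-expansion), so one must either open up the proof of that theorem and exhibit the parity symmetry of the phase and amplitude at the diagonal critical point, or else invoke Theorem \ref{thm:higher-dim-case}. The rest — collapsing the exponential prefactors, identifying the stabilizer sum, and checking smoothness of the $B_l$ across the stabilizer-jump locus — is routine.
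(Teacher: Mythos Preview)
Your proposal is correct and matches the paper's own (very brief) indication of proof: the text immediately preceding the Corollary says exactly that the integer-power expansion ``can be seen either by modifying the proof of Theorem~\ref{thm:circle-case}, or else by applying Theorem~\ref{thm:higher-dim-case} below,'' and you identify both routes.

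One small comment on your first route. The ``parity of Gaussian monomials'' mechanism you describe is more elaborate than necessary. The cleaner way to ``modify the proof'' is simply this: when $\upsilon_1=\upsilon_2=\mathbf{0}$ there is no $1/\sqrt{k}$ displacement to compete with, so one can skip the rescaling $\eta\rightsquigarrow\eta/\sqrt{k}$ altogether and apply stationary phase directly to the phase $\Psi(x,t,\vartheta)=t\,\psi\big(\widetilde{\mu}_{-\vartheta}(x),x\big)-\vartheta$ with large parameter $k$ (not $\sqrt{k}$). This is precisely what the proof of Theorem~\ref{thm:higher-dim-case} does in the general torus case, and for $\mathrm{g}=1$, $\varpi=1$ it specializes to the Corollary, with $M_\varpi=M$ and $\mathcal{D}(m)=1$. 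So your ``shortcut'' via Theorem~\ref{thm:higher-dim-case} is in fact the more transparent argument, and the paper treats it as such.

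A minor caution on smoothness: your justification that ``$T_m$ is locally constant on the complement of the jump locus'' does not by itself explain why the $B_l$ extend smoothly \emph{across} that locus. The actual reason, implicit in the proofs, is that each stabilizer element contributes the \emph{same} stationary-phase coefficients (since $\widetilde{\mu}_{-\mathbf{b}_e}(x)=x$ for every $e$), so the $B_l$ factor out of the sum over $T_m$ and depend only on local data near the diagonal, which varies smoothly with $m$; the remark after the proof of Theorem~\ref{thm:circle-case} about smooth deformation of the nondegenerate critical point makes this precise.
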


In particular, $\widetilde{\Pi}_k(x,x)=0$ unless $k$ is a multiple of $|T_m|$.
Now the cardinality $|T_m|$ needn't be constant on $M$, but it does attain a generic minimal value $\ell$
on some dense open subset $M^0\subseteq M$ (Corollary B47 of \cite{ggk}). In fact, in the present Abelian
setting there is a finite subgroup $L\subseteq \mathbb{T}^1$ which is the stabilizer of a general $x\in X$,
and $\ell=|L|$
(we might as well quotient by $L$ and reduce to the case where it is trivial, whence $\ell=1$). Clearly,
$\widetilde{H}_{k}(X)=\{0\}$ unless $\ell|k$. On the other hand, we have the following:

\begin{cor}
\label{cor:dim-circle-case}
In the hypothesis of Theorem \ref{thm:circle-case},
$$
\lim_{k\rightarrow +\infty}\frac{\mathrm{d}!}{(\ell k)^\mathrm{d}}\,\dim \left(\widetilde{H}_{kl}(X)\right)=
\ell\,\int_M\Phi^{-(d+1)}\,c_1(A)^\mathrm{d}.
$$
\end{cor}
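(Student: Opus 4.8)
The plan is to evaluate $\dim\big(\widetilde{H}_{k\ell}(X)\big)$ as the $L^2$-trace of the orthogonal projector onto $\widetilde{H}_{k\ell}(X)$ and then insert the on-diagonal expansion of Corollary \ref{cor:point-wise-circle}. Since $\mathbf{0}\notin\Phi(M)$ this projector is smoothing, hence trace class, and
\[
\dim\big(\widetilde{H}_{k\ell}(X)\big)=\int_X\widetilde{\Pi}_{k\ell}(x,x)\,dV_X(x).
\]
As recalled in the introduction, $x\mapsto\widetilde{\Pi}_{k\ell}(x,x)$ is the pull-back to $X$ of a smooth function on $M$; since $dV_X=(1/2\pi)\,\alpha\wedge\pi^*(dV_M)$ and $(1/2\pi)\,\alpha$ integrates to $1$ along the fibres of $\pi$, integration over $X$ collapses to $\int_M\widetilde{\Pi}_{k\ell}(m,m)\,dV_M(m)$, with the obvious abuse of notation.

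Next I would localize on the dense open set $M^0\subseteq M$ on which the stabilizer equals the generic finite subgroup $L\subseteq\mathbb{T}^1$, $|L|=\ell$. By the orbit-type stratification (Corollary B47 of \cite{ggk}) the complement $M\setminus M^0$ is a finite union of fixed-point submanifolds of elements of $\mathbb{T}^1$, each of positive codimension, hence of Lebesgue measure zero; as $\widetilde{\Pi}_{k\ell}(\cdot,\cdot)$ is smooth, it contributes nothing to the integral above. For $m\in M^0$ we have $T_m=L$ and, because $g^\ell=1$ for every $g\in L$, $\sum_{g\in T_m}g^{k\ell}=|L|=\ell$; thus Corollary \ref{cor:point-wise-circle} yields, for $m\in M^0$,
\[
\widetilde{\Pi}_{k\ell}(m,m)=\Big(\tfrac{k\ell}{\pi}\Big)^{\mathrm{d}}\,\Phi(m)^{-(\mathrm{d}+1)}\,\ell\,\big(1+O(k^{-1})\big),
\]
uniformly on compact subsets of $M^0$. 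To interchange the limit with the integral I would use dominated convergence: putting $\upsilon_1=\upsilon_2=\mathbf{0}$ in Theorem \ref{thm:circle-case}(3) gives the bound $\widetilde{\Pi}_{k\ell}(m,m)=O\big((k\ell)^{\mathrm{d}}\big)$ uniformly in $m$ (here one uses that $m\mapsto|T_m|$ is bounded on the compact $M$, and that $\Phi^{-(\mathrm{d}+1)}$ is bounded). Therefore
\[
\lim_{k\to+\infty}\frac{\mathrm{d}!}{(k\ell)^{\mathrm{d}}}\,\dim\big(\widetilde{H}_{k\ell}(X)\big)=\frac{\mathrm{d}!\,\ell}{\pi^{\mathrm{d}}}\int_M\Phi^{-(\mathrm{d}+1)}\,dV_M .
\]

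It remains to recast the right-hand side. From $\Theta=-2i\omega$ the first Chern form is $c_1(A)=\tfrac{i}{2\pi}\Theta=\omega/\pi$, so at the level of forms $c_1(A)^{\mathrm{d}}=\omega^{\wedge\mathrm{d}}/\pi^{\mathrm{d}}=(\mathrm{d}!/\pi^{\mathrm{d}})\,dV_M$; substituting this turns the previous display into
\[
\lim_{k\to+\infty}\frac{\mathrm{d}!}{(k\ell)^{\mathrm{d}}}\,\dim\big(\widetilde{H}_{k\ell}(X)\big)=\ell\int_M\Phi^{-(\mathrm{d}+1)}\,c_1(A)^{\mathrm{d}},
\]
which is the assertion. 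The step I expect to be the crux is the justification of the passage to the limit under the integral sign near the stabilizer-jump locus $M\setminus M^0$: this is what forces one to combine the uniform $O(k^{\mathrm{d}})$ upper bound coming from Theorem \ref{thm:circle-case}(3) with the (measure-zero) orbit-type stratification, rather than relying on the pointwise expansion of Corollary \ref{cor:point-wise-circle} alone.
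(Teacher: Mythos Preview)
Your proof is correct and follows essentially the same route as the paper's: express the dimension as the integral of the diagonal kernel over $M$, invoke the uniform $O(k^{\mathrm{d}})$ bound from Theorem~\ref{thm:circle-case} to apply dominated convergence, use the pointwise limit on the generic stratum $M^0$ where $T_m=L$, and finish with the identification $dV_M/\pi^{\mathrm{d}}=(1/\mathrm{d}!)\,c_1(A)^{\mathrm{d}}$. Your write-up is in fact a bit more explicit than the paper's in justifying the fiber integration, the measure-zero nature of $M\setminus M^0$, and the computation $\sum_{g\in L}g^{k\ell}=\ell$.
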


We shall now consider point-wise expansions and scaling asymptotics
for general $\mathrm{g}$.

\begin{defn}
If $\varpi=(\varpi_1,\ldots,\varpi_\mathrm{g})\in \mathbb{Z}^\mathrm{g}$ and $t=(t_1,\ldots,t_{\mathrm{g}})\in\mathbb{T}^\mathrm{g}$,
we shall set $t^\varpi=:t_1^{\varpi_1}\cdots t_\mathrm{g}^{\varpi_\mathrm{g}}$ and $\chi_\varpi(t)=:t^\varpi$.
\end{defn}

Thus
$\widetilde{\Pi}_\varpi$ is the Schwartz kernel of the orthogonal projector of $L^2(X)$ onto the subspace
$$
H_\varpi(X)=:\Big\{s\in H(X)\,:\,s\left(\widetilde{\mu}_{t^{-1}}(x)\right)
=t^\varpi\,s(x)\,\forall\,x\in X,\,t\in\mathbb{T}^\mathrm{g}\Big\}.
$$

Given that $\mathbf{0}\not\in \Phi(M)$,
if in addition $\Phi$ is transverse to $\mathbb{R}_+\cdot \varpi$ then $M_\varpi=:\Phi^{-1}(\mathbb{R}_+\cdot \varpi)$
(if non-empty)
is a connected compact submanifold of $M$, of real codimension $\mathrm{g}-1$ (Lemma \ref{lem:connected}).
Furthermore, if $m\in M_\varpi$ and $m=\pi(x)$, then the stabilizer subgroup $T_m\subseteq \mathbb{T}^\mathrm{g}$
of $x$ for $\widetilde{\mu}$
is finite (Lemma \ref{lem:moment-map-transverse}).

Under the same transversality assumption, furthermore,
the normal bundle $N$ of $M_\varpi$ in $M$ is naturally isomorphic to the
vector bundle with fiber $\ker \big(\Phi(m)\big)$ ($m\in M_\varpi$) (Lemma \ref{lem:N-ker-Phi-transverse}).
Thus for every
$m\in M_\varpi$ we have two Euclidean structures on $\ker\big(\Phi(m)\big)$, induced from
$\mathfrak{t}$ and $T_mM$, respectively. Let $D(m)$ be the matrix representing the
latter Euclidean product on $N_m$, with respect to an orthonormal basis of the former. Then
$\det D(m)$ is independent of the choice of an orthonormal basis for $\ker \big(\Phi(m)\big)\subseteq \mathfrak{t}$,
and so it determines a positive smooth function on $M_\varpi$.

\begin{defn}
\label{defn:calD}
Define $\mathcal{D}\in \mathcal{C}^\infty(M_\varpi)$ by setting:
$$
\mathcal{D}(m)=:\sqrt{\det D(m)}\,\,\,\,\,\,\,\,\,\,\,\,\,\,\,(m\in M_\varpi).
$$
\end{defn}

\begin{thm}
\label{thm:higher-dim-case}
Let $\mu:\mathbb{T}^\mathrm{g}\times M\rightarrow M$ be a holomorphic Hamiltonian action on $(M,2\omega)$, with
moment map $\Phi:M\rightarrow \mathfrak{t}^\vee$ satisfying $\mathbf{0}\not\in \Phi(M)$.
Suppose that $\widetilde{\mu}:\mathbb{T}^\mathrm{g}\times A\rightarrow A$ is a linearization of $\mu$ leaving $h$
invariant. Then for any $\varpi\in \mathbb{Z}^\mathrm{g}$ the following holds.

If  $m=\pi(x)$ and $\Phi (m)\not\in \mathbb{R}_+\cdot \varpi$, then $\widetilde{\Pi}_{k\varpi}(x,x)=O\left(k^{-\infty}\right)$ as
$k\rightarrow +\infty$.

Assume that $\Phi$ is transversal
to $\mathbb{R}_+\cdot \varpi$. Then for every $m=\pi(x)\in M_\varpi$ as $k\rightarrow +\infty$ we have
\begin{eqnarray*}
\widetilde{\Pi}_{k\varpi}(x,x)&\sim&\frac{1}{\left(\sqrt{2}\pi\right)^{\mathrm{g}-1}}
\left(\|\varpi\|\cdot \frac k\pi\right)^{\mathrm{d}+(1-\mathrm{g})/2}\cdot
\sum_{g\in T_m}\chi_\varpi(g)^k\\
&&\cdot \frac{1}{\mathcal{D}(m)}\,\left(\frac{1}{\|\Phi(m)\|}\right)^{d+1+(1-\mathrm{g})/2}
\cdot \left(1+\sum_{l\ge 1}B_l\,k^{-l}\right),
\end{eqnarray*}
where the $B_l$'s are smooth functions on $M_\varpi$.
\end{thm}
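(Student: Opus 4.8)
The strategy is to express $\widetilde{\Pi}_{k\varpi}$ as a group average of the $S^1$-Szeg\H{o} kernel and then apply stationary phase in the torus variables. Concretely, writing $\Pi=\sum_k\Pi_k$ for the full Szeg\H{o} projector and recalling that each $\Pi_k$ has the Boutet de Monvel--Sj\"ostrand oscillatory integral representation used in \cite{z,bsz,sz}, we have for the $\varpi$-isotype
\begin{equation*}
\widetilde{\Pi}_{k\varpi}(x,x)=\int_{\mathbb{T}^\mathrm{g}}\chi_\varpi(t)^{-1}\,\Pi_k\big(\widetilde{\mu}_{t^{-1}}(x),x\big)\,dt,
\end{equation*}
where $dt$ is normalized Haar measure. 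Inserting the microlocal description $\Pi_k(x,y)=\big(k/\pi\big)^{\mathrm d}\int e^{ik\psi(x,y)}\,s(x,y,k)\,\mathrm{(phase/amplitude)}$ and using the transformation law for the phase under $\widetilde{\mu}_{t^{-1}}$ (which, by the relation $\xi_X=\xi_M^\sharp-\langle\Phi,\xi\rangle\,\partial/\partial\theta$, introduces the factor $e^{-ik\langle\Phi(m),\vartheta\rangle}$ where $t=\exp(\vartheta)$), one obtains an oscillatory integral over $\mathbb{T}^\mathrm{g}$ with phase $k\Psi(m,\vartheta)$ whose critical point in $\vartheta$ is governed by $\Phi(m)\parallel\varpi$. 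This is why the leading term vanishes to infinite order off $M_\varpi$ (first assertion): integration by parts in $\vartheta$ gains powers of $k^{-1}$ when $\Phi(m)\notin\mathbb{R}_+\cdot\varpi$.

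The main body of the argument is the stationary phase analysis on $M_\varpi$. Near a point $m\in M_\varpi$ with $\Phi(m)=(\|\Phi(m)\|/\|\varpi\|)\,\varpi$, one decomposes $\mathfrak{t}=\mathbb{R}\cdot\overline\varpi\oplus\ker\Phi(m)$ (with $\overline\varpi=\varpi/\|\varpi\|$). The component of $\vartheta$ along $\overline\varpi$ contributes the $k$-th power isotype: summing over this direction restores exactly the structure-circle Szeg\H{o} kernel and produces the factor $\big(\|\varpi\|\,k/\pi\big)^{\mathrm d}$ together with the scalar $\|\Phi(m)\|^{-(\mathrm d+1)}$, mirroring Corollary \ref{cor:point-wise-circle}. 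The remaining $\mathrm g-1$ directions in $\ker\Phi(m)$ are genuine stationary-phase variables: the transversality of $\Phi$ to $\mathbb{R}_+\cdot\varpi$ makes the Hessian of $\Psi$ in those directions nondegenerate, and a computation identifies this Hessian with the Euclidean form on $\ker\Phi(m)\cong N_m$ coming from $T_mM$. Applying the stationary phase lemma to these $\mathrm g-1$ variables yields the Gaussian normalization $\big(\sqrt2\,\pi\big)^{-(\mathrm g-1)}$, the power shift $k^{(1-\mathrm g)/2}$ and $\|\Phi(m)\|^{(1-\mathrm g)/2}$, and the Jacobian factor $1/\mathcal{D}(m)=1/\sqrt{\det D(m)}$ from Definition \ref{defn:calD}. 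The finite stabilizer $T_m$ is detected because the integral localizes at the finitely many $\vartheta$'s with $\widetilde{\mu}_{\exp\vartheta}$ fixing $x$ up to the structure circle; each contributes $\chi_\varpi(g)^k$, giving the sum over $g\in T_m$. The lower-order terms $B_l$ come from the standard stationary-phase expansion applied to the amplitude $s(x,y,k)$ and the curvature of $\Psi$.

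\textbf{Main obstacle.}
The delicate point is controlling the stationary phase expansion uniformly and matching the geometry: one must show that, after the reduction, the phase $\Psi$ restricted to the $\ker\Phi(m)$-directions has a \emph{clean} critical manifold whose transverse Hessian is exactly the $D(m)$ form, and that no spurious critical points survive inside the fundamental domain of $\mathbb{T}^\mathrm{g}$ other than those corresponding to the stabilizer $T_m$. This requires a careful bookkeeping of the interplay between the Heisenberg local coordinates on $X$, the horizontal lift $\xi_M^\sharp$, and the $\Phi$-twist in $\xi_X$ — essentially the content of Lemmas \ref{lem:heis-tg}, \ref{lem:connected}, \ref{lem:moment-map-transverse} and \ref{lem:N-ker-Phi-transverse}, assembled into one phase computation. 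A secondary technical hurdle is justifying that the off-diagonal decay and the scaling-variable bounds from Theorem \ref{thm:circle-case} can be leveraged to discard the contribution of $m$ away from a shrinking neighborhood of $M_\varpi$, so that the whole estimate reduces to the local model near $M_\varpi$. Once the phase identification is in place, the rest is a routine, if lengthy, application of the stationary phase method together with the known structure of $\Pi_k$.
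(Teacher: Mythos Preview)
Your starting formula is incorrect, and this is not a minor slip. You write
\[
\widetilde{\Pi}_{k\varpi}(x,x)=\int_{\mathbb{T}^\mathrm{g}}\chi_\varpi(t)^{-1}\,\Pi_k\big(\widetilde{\mu}_{t^{-1}}(x),x\big)\,dt,
\]
but the $k$ in $\Pi_k$ indexes the isotype of the \emph{structure} circle action, which is unrelated to the scaling parameter $k$ in the weight $k\varpi$. As the introduction stresses, $\widetilde{H}_{k\varpi}(X)$ is not contained in any single $H_l(X)$; it picks up pieces from $H_l(X)$ for many different $l$. The correct identity is
\[
\widetilde{\Pi}_{k\varpi}(x,x)=\frac{1}{(2\pi)^{\mathrm g}}\int_{(-\pi,\pi)^{\mathrm g}} e^{-ik\vartheta\cdot\varpi}\,\Pi\big(\widetilde{\mu}_{-\vartheta}(x),x\big)\,d\vartheta,
\]
with the \emph{full} Szeg\H{o} projector $\Pi$. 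This matters: to obtain an oscillatory integral with large parameter $k$ you must now insert the Boutet de Monvel--Sj\"ostrand representation of $\Pi$, which brings in an extra real variable $t\in(0,\infty)$, and only after rescaling $t\mapsto kt$ does the phase $\Psi(x,t,\vartheta)=t\,\psi\big(\widetilde{\mu}_{-\vartheta}(x),x\big)-\vartheta\cdot\varpi$ appear. Your claim that the relation $\xi_X=\xi_M^\sharp-\langle\Phi,\xi\rangle\,\partial/\partial\theta$ ``introduces the factor $e^{-ik\langle\Phi(m),\vartheta\rangle}$'' is a symptom of the same confusion: what actually enters is $e^{ikt\,\vartheta\cdot\Phi(m)}$ with the FIO variable $t$, and the critical-point equation $\partial_\vartheta\Psi=0$ reads $t\,\Phi(m)=\varpi$, singling out $t=\|\varpi\|/\|\Phi(m)\|$ and $\Phi(m)\in\mathbb{R}_+\cdot\varpi$.

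Once the starting point is fixed, your overall plan (localize to stabilizer points, stationary phase, identify the transverse Hessian with $D(m)$) is sound, but the route you sketch---splitting $\mathfrak t=\mathbb R\,\overline\varpi\oplus\ker\Phi(m)$ and treating the $\overline\varpi$-direction first to ``restore the structure-circle Szeg\H{o} kernel''---is not what the paper does for this theorem. The paper applies stationary phase \emph{simultaneously} in all $\mathrm g+1$ variables $(t,\vartheta)$, computes the $(\mathrm g{+}1)\times(\mathrm g{+}1)$ Hessian directly, and evaluates its determinant via Lemmas \ref{lem:key-det-1}--\ref{lem:key-det-3} to obtain $\|\Phi(m)\|^2\det D(m)$. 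Your sequential scheme (handle $(t,\vartheta_1)$ first, then the remaining $\mathrm g-1$ directions) is essentially the method the paper uses later in the proof of Theorem \ref{thm:scaling-limit-general-precise}; it can be made to work here too, but the assertion that the $\overline\varpi$-direction literally reproduces the structure-circle kernel is not accurate---one is not averaging over the structure $S^1$, one is performing a two-variable stationary phase in $(t,\vartheta_1)$ whose output merely \emph{mimics} the leading term of Corollary \ref{cor:point-wise-circle}.
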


If non-empty, the locus $M'\subseteq M$ where $\mu$ is locally free is open and dense
(Corollary B.47 of \cite{ggk}). Similarly,
if $M_\varpi'=:M_\varpi\cap M'\neq \emptyset$, then $M_\varpi'$ is open and dense in $M_\varpi$.
In this case, the leading term in the asymptotic expansion in
Theorem \ref{thm:higher-dim-case} may be given an alternative expression if
$m\in M'_\varpi$.

If $\eta\in \mathfrak{t}$, let
$\eta_M$ be the induced smooth vector field on $M$; for any $m\in M$, evaluation yields a linear map $\mathrm{val}_m:\mathfrak{t}\rightarrow T_mM$, $\xi\mapsto \xi_M(m)$. Clearly, $m\in M'$ if and only if
$\mathrm{val}_m$ is injective.

For $m\in M'$, let
$\langle \,,\,\rangle _m$ be the Euclidean product on $\mathfrak{t}$ given by pull-back under $\mathrm{val}_m$
of the Riemannian structure $g$ of $T_mM$:
$$
\langle \xi,\upsilon\rangle _m=:g_m\big(\xi_M(m),\upsilon_M(m)\big) \,\,\,\,\,\,\,\,\,\,(\xi,\upsilon \in \mathfrak{t}).
$$
Also, let
$\|\cdot\|_m:\mathfrak{t}\rightarrow \mathbb{R}$ be the corresponding norm. The same notation will denote the
Euclidean product and the norm induced on $\mathfrak{t}^*$ under duality.

Let
$V_{\mathrm{eff}}:M'\rightarrow \mathbb{R}$ be the effective potential of $\mu$, that is, $V_{\mathrm{eff}}(m)$
is the volume of the orbit $\mathbb{T}\cdot m$ ($m\in M'$).
If  $m=\pi(x)\in M'_\varpi$ and $\Phi (m)\in \mathbb{R}_+\cdot \varpi$, then
\begin{eqnarray}
\label{eqn:alternative-thm-2}
\widetilde{\Pi}_{k\varpi}(x,x)&\sim&\left(\|\varpi\|\cdot \frac k\pi\right)^{\mathrm{d}+(1-\mathrm{g})/2}\cdot
\frac{1}{|T_m|}\,\sum_{g\in T_m}\chi_\varpi(g)^k\\
&&\cdot \frac{2^{(\mathrm{g}+1)/2}\pi}{V_{\mathrm{eff}}(m)\,\|\Phi(m)\|_m}\,\left(\frac{1}{\|\Phi(m)\|}\right)^{d+(1-\mathrm{g})/2}
\cdot \left(1+\sum_{l\ge 1}B_l\,k^{-l}\right).\nonumber
\end{eqnarray}
For $\mathrm{g}=1$, one can see that
$\|\Phi(m)\|=(2\pi)^{-1}\cdot V_{\mathrm{eff}}(m)\,|T_m|\,\|\Phi(m)\|_m$, so that
(\ref{eqn:alternative-thm-2}) tallies with Theorem \ref{thm:circle-case}.

Let us make more precise the sense in which $\widetilde{\Pi}_{k\varpi}$ localizes around
$X_\varpi$. First we have:

\begin{thm}
\label{thm:scaling-limit-general}
Assume that $\mathbf{0}\not\in \Phi(M)$ and $\Phi$ is transversal
to $\mathbb{R}_+\cdot \varpi$. Let $C,\epsilon>0$. Then, uniformly for
$$
\max\big\{\mathrm{dist}_X\big(\mathbb{T}\cdot x,X_\varpi\big),
\mathrm{dist}_X\big(\mathbb{T}\cdot x,\mathbb{T}\cdot y\big)\big\}
\ge C\,k^{\epsilon-1/2},
$$
we have
$\widetilde{\Pi}_{k\varpi}(x,y)=O\left(k^{-\infty}\right)$ as
$k\rightarrow +\infty$.
\end{thm}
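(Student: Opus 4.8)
The plan is to represent $\widetilde{\Pi}_{k\varpi}$ as an oscillatory integral against the full Szeg\"o kernel $\Pi$ of $X$, and then to exhibit the exponential decay via repeated integration by parts in a non-stationary phase. Concretely, one starts from the group-averaging formula
\begin{equation*}
\widetilde{\Pi}_{k\varpi}(x,y)=\int_{\mathbb{T}^\mathrm{g}}\overline{\chi_\varpi(t)}\,^k\,\Pi\big(\widetilde{\mu}_{t^{-1}}(x),y\big)\,dt,
\end{equation*}
and inserts the Boutet de Monvel--Sj\"ostrand description $\Pi(x,y)=\int_0^{+\infty}e^{i\,u\,\psi(x,y)}\,s(u,x,y)\,du$, where $\psi$ is the standard phase with $\Im\psi\ge 0$ and $s$ is a classical symbol of order $\mathrm{d}$ in $u$. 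Rescaling $u=k\,r$ as in \cite{z,sz} turns this into an integral over $(r,t)$ with large parameter $k$ and phase $\Psi=r\,\psi\big(\widetilde{\mu}_{t^{-1}}(x),y\big)-\langle\varpi,\log t\rangle$ (interpreting the last term as $r$-independent after noting $\chi_\varpi(t)^k=e^{ik\langle\varpi,\log t\rangle}$ appropriately on the torus). One first localizes to $r$ bounded away from $0$ and $+\infty$ by the usual argument (the $r\to 0$ end is killed by the symbol estimates after integrating in $t$, using $\Phi$-positivity to bound the $t$-integral, and the $r\to\infty$ end by $\Im\psi$), so that the remaining region is compact.

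Next I would compute the critical-point equations of $\Psi$ in $(r,t)$. Differentiating in $r$ forces $\psi\big(\widetilde{\mu}_{t^{-1}}(x),y\big)=0$, which by the standard properties of $\psi$ means $\widetilde{\mu}_{t^{-1}}(x)=y$, i.e.\ $y\in\mathbb{T}\cdot x$; so if $\mathrm{dist}_X(\mathbb{T}\cdot x,\mathbb{T}\cdot y)\ge C\,k^{\epsilon-1/2}$ the $r$-gradient has size $\gtrsim k^{\epsilon-1/2}$ in a quantitative sense (this uses that $\Re(i\psi)$ controls $\mathrm{dist}_X$ near the diagonal, exactly as in \cite{bsz,sz}). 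Differentiating in $t$ brings in the moment map: using the infinitesimal relation $\xi_X=\xi_M^\sharp-\langle\Phi,\xi\rangle\,\partial/\partial\theta$ recalled in the introduction, together with $d\psi$ along the orbit direction, the $t$-critical equation becomes $r\,\Phi(\pi(x))=\varpi$ up to the identification $\mathfrak{t}^\vee\cong\mathbb{R}^\mathrm{g}$ (modulo the finite stabilizer $T_m$, which produces the sum over $g\in T_m$ in the diagonal asymptotics but is irrelevant for the off-diagonal decay statement). Hence the $t$-gradient of $\Psi$ has size $\gtrsim\mathrm{dist}\big(\mathbb{R}_+\varpi,\Phi(\pi(x))\big)$, which by the transversality of $\Phi$ to $\mathbb{R}_+\varpi$ is comparable to $\mathrm{dist}_X(\mathbb{T}\cdot x,X_\varpi)$. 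Therefore on the hypothesized region the total gradient of $\Psi$ in $(r,t)$ is bounded below by $c\,k^{\epsilon-1/2}$.

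With a quantitative lower bound $|\nabla_{(r,t)}\Psi|\ge c\,k^{\epsilon-1/2}$ on the compact integration region, I would apply the standard non-stationary phase / integration-by-parts lemma (Lemma of \cite{hor-III}, or the version with parameters used in \cite{sz}): each integration by parts gains a factor $O\big((k\cdot k^{\epsilon-1/2})^{-1}\big)=O(k^{-1/2-\epsilon})$ while differentiating the symbol costs at most a fixed power of $k$ (from the rescaling $u=kr$ and the contribution of $\log t$-derivatives), so $N$ steps yield $O(k^{-N(1/2+\epsilon)}\cdot k^{c_0 N})$ for some fixed $c_0$ coming from the worst symbol derivative; choosing the bookkeeping so the net exponent is negative and letting $N\to\infty$ gives $O(k^{-\infty})$, uniformly, since all constants depend only on fixed geometric data and on $C,\epsilon$. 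The main obstacle is the uniformity: one must track that the lower bound on $|\nabla\Psi|$, and the symbol-derivative bounds, hold with constants independent of $x,y$ in the stated region — this is where the compactness coming from $\mathbf{0}\notin\Phi(M)$ (\S 2) and from transversality of $\Phi$ to $\mathbb{R}_+\varpi$ (so that $X_\varpi$ is compact) is essential, and where one has to be careful near points where the stabilizer $T_m$ is nontrivial, handling the sum over $t$ near each $g\in T_m$ separately by the same integration-by-parts estimate.
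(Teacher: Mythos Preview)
Your approach is essentially the paper's: represent $\widetilde{\Pi}_{k\varpi}$ by group-averaging against the Boutet de Monvel--Sj\"ostrand parametrix, localize the radial variable to a compact interval, and kill the integral by integration by parts once the phase gradient is bounded below---the $r$-derivative controlled by $\mathrm{dist}_X(\mathbb{T}\cdot x,\mathbb{T}\cdot y)$, the $\vartheta$-derivative by $\|r\,\Phi(\pi(x))-\varpi\|$ and hence (via transversality) by $\mathrm{dist}_X(\mathbb{T}\cdot x,X_\varpi)$. The paper organizes the argument a little differently: it proceeds by contradiction along sequences, treats the two distance conditions as separate cases rather than through a combined gradient bound, and for the condition $\mathrm{dist}_X(\mathbb{T}\cdot x,X_\varpi)\ge Ck^{\epsilon-1/2}$ it first proves the \emph{diagonal} estimate $\widetilde{\Pi}_{k\varpi}(x,x)=O(k^{-\infty})$ and then passes to arbitrary $(x,y)$ by Cauchy--Schwarz, $|\widetilde{\Pi}_{k\varpi}(x,y)|\le\widetilde{\Pi}_{k\varpi}(x,x)^{1/2}\,\widetilde{\Pi}_{k\varpi}(y,y)^{1/2}$; this sidesteps having to control the $\vartheta$-gradient of $\psi$ at points where $\widetilde{\mu}_{-\vartheta}(x)$ is not close to $y$, which your combined-gradient argument needs but does not spell out. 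One quantitative slip worth fixing: since $|\psi(x',y)|\asymp\mathrm{dist}_X(x',y)^2$ near the diagonal, the $r$-gradient lower bound is $\gtrsim k^{2\epsilon-1}$, not $k^{\epsilon-1/2}$; the conclusion is unaffected because each integration by parts in $r$ then gains a factor $k^{-2\epsilon}$.
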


Given this, we are led to studying scaling asymptotics for $\widetilde{\Pi}_{k\varpi}$ at points of
$M_\varpi$.
If $\mathbf{v}\in T_mM$ we shall write $x+\mathbf{v}=x+(0,\mathbf{v})$.

\begin{defn}
For $m\in M$ and $\mathbf{v}_1,\mathbf{v}_2\in T_mM$, let us set
$$
\lambda_\varpi(m)=:\|\varpi\|/\|\Phi(m)\|
$$
and
$$
H_m(\mathbf{v}_1,\mathbf{v}_2)=:\lambda_\varpi(m)\,\Big[-i\,\omega_m(\mathbf{v}_1,\mathbf{v}_2)-
\big(\|\mathbf{v}_1\|^2+\|\mathbf{v}_2\|^2\big)\Big].
$$
\end{defn}

\begin{thm}
\label{thm:scaling-limit-general-precise}
Assume that $\mathbf{0}\not\in \Phi(M)$, $M_\varpi\neq \emptyset$ and $\Phi$ is transversal
to $\mathbb{R}_+\cdot \varpi$. Then:

\begin{enumerate}
  \item $\widetilde{\Pi}_{k\varpi}=0$ for any $k\le 0$.
 \item Uniformly in $x\in X_\varpi$ and
in
$\mathbf{v}_l\in  T_mM$, with $m=\pi(x)$,
satisfying
\begin{center}
$\mathbf{v}_l\in N_m$ and
$\big\|\mathbf{v}_l\big\|\le C\,k^{1/9}$,
\end{center}
as $k\rightarrow +\infty$ we have
\begin{eqnarray*}
\lefteqn{\widetilde{\Pi}_{k\varpi}\left(x+\frac{\mathbf{v}_1}{\sqrt{k}},
x+\frac{\mathbf{v}_2}{\sqrt{k}}\right)}\\
&\sim &\frac{1}{(\sqrt{2}\pi)^{\mathrm{g}-1}}\left(\|\varpi\|\cdot\frac k\pi\right)^{\mathrm{d}+(1-\mathrm{g})/2}\,
\left(\sum_{t\in T_m}\chi_\varpi(t)^k\,e^{H_m\big(d_m\mu_{t^{-1}}(\mathbf{v}_1),\mathbf{v}_2\big)}\right)\\
&&\cdot \frac{1}{\mathcal{D}(m)}\,
\left(\frac{1}{\|\Phi(m)\|}\right)^{\mathrm{d}+1+(1-\mathrm{g})/2}\cdot
\left(1+\sum _{j\ge 1}R_j(m,\mathbf{v}_1,\mathbf{v}_2)\,k^{-j/2}\right),
\end{eqnarray*}
for certain smooth functions $R_j$, polynomal in the $\mathbf{v}_l$'s.
\end{enumerate}
\end{thm}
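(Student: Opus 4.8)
The plan is to reduce Theorem \ref{thm:scaling-limit-general-precise} to a stationary-phase analysis of the Szeg\"{o} kernel $\Pi$ of $X$ (the full projector onto $H(X)$), exactly as in the circle case. First I would express the equivariant projector by averaging over the torus: since $\mathbb{T}^\mathrm{g}$ is compact and acts on $X$ as contactomorphisms preserving $H(X)$, one has
\begin{equation*}
\widetilde{\Pi}_{k\varpi}(x,y)=\int_{\mathbb{T}^\mathrm{g}}\overline{\chi_\varpi(t)^{\,k}}\;\Pi_k\big(\widetilde{\mu}_{t^{-1}}(x),y\big)\,dt \cdot (\text{const}),
\end{equation*}
where $\Pi_k$ is obtained from $\Pi$ by a further averaging over the \emph{structure} circle. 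Plugging in the microlocal description of $\Pi$ as a Fourier integral operator with complex phase (Boutet de Monvel--Sj\"ostrand, as used in \cite{z},\cite{bsz},\cite{sz}), one gets a scalar oscillatory integral over $\mathbb{T}^\mathrm{g}\times \mathbb{R}_+$ (the torus variable $t$ and the cometric variable $u$ dual to the structure $S^1$), whose phase, after the rescalings $x\mapsto x+\mathbf{v}_1/\sqrt k$, $y\mapsto x+\mathbf{v}_2/\sqrt k$, I would expand in powers of $k^{-1/2}$.

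The second step is the critical-point analysis. Writing $t=\exp(\theta)$ with $\theta\in\mathfrak{t}$, the exponent at leading order is (a multiple of) $-i\,k\,\langle\Phi(m)\cdot u-\varpi,\theta\rangle + ku(\text{something})$; stationarity in $\theta$ forces $\Phi(m)\,u=\varpi$, i.e. $\Phi(m)\in\mathbb{R}_+\cdot\varpi$, which is why the kernel is $O(k^{-\infty})$ off $M_\varpi$ and why the transversality hypothesis enters: it guarantees that $M_\varpi=\Phi^{-1}(\mathbb{R}_+\varpi)$ is a clean critical manifold and pins down $u=\|\varpi\|/\|\Phi(m)\|=\lambda_\varpi(m)$. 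Transversality also makes the Hessian in the remaining $(\mathrm{g}-1)$ torus directions non-degenerate, and its determinant is precisely what is packaged as $\mathcal{D}(m)^2$ (the discrepancy between the $\mathfrak{t}$-metric and the $T_mM$-metric on $\ker\Phi(m)\cong N_m$, cf. Definition \ref{defn:calD} and Lemma \ref{lem:N-ker-Phi-transverse}). Carrying out stationary phase in $(\theta,u)$ produces the prefactor $(\sqrt2\pi)^{-(\mathrm{g}-1)}\big(\|\varpi\|k/\pi\big)^{\mathrm{d}+(1-\mathrm{g})/2}\,\mathcal{D}(m)^{-1}\,\|\Phi(m)\|^{-(\mathrm{d}+1+(1-\mathrm{g})/2)}$; the non-isolated stationary points (the orbit $\mathbb{T}^\mathrm{g}\cdot x$, and the finite stabilizer $T_m$) contribute the sum $\sum_{t\in T_m}\chi_\varpi(t)^k$, and the surviving $\sqrt k$-scaled variable is the normal/off-diagonal directions, which by Remark after Definition \ref{defn:calD} and the $\psi_2$-type computation of \cite{bsz},\cite{sz} assemble into the Gaussian factor $e^{H_m(d_m\mu_{t^{-1}}(\mathbf{v}_1),\mathbf{v}_2)}$. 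That $\mathbf{v}_l$ must lie in $N_m$ (rather than the full $T_mM$) is forced because tangential-to-$M_\varpi$ perturbations are absorbed into the orbit direction and the base-point, exactly as $\xi_X(x)^\perp$ appears in Theorem \ref{thm:circle-case}.

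The third step is to upgrade the leading term to a full asymptotic expansion in $k^{-1/2}$ with polynomial coefficients $R_j$ in $\mathbf{v}_1,\mathbf{v}_2$, and to make it uniform in the stated range $\|\mathbf{v}_l\|\le Ck^{1/9}$. For this I would Taylor-expand the amplitude and the non-quadratic part of the phase of the Boutet de Monvel--Sj\"ostrand parametrix around the critical manifold, rescale, and integrate term by term against the Gaussian; the exponent $1/9$ is the usual threshold (as in \cite{sz}) ensuring that the cubic-and-higher remainder terms $k^{-1/2}\|\mathbf{v}\|^3$, etc., stay $o(1)$ and that the tail of the oscillatory integral outside a $k^{\epsilon-1/2}$-neighbourhood of the critical set is $O(k^{-\infty})$ (this last point is Theorem \ref{thm:scaling-limit-general}). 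Part (1), $\widetilde{\Pi}_{k\varpi}=0$ for $k\le0$, is immediate from positivity: $\Pi_k=0$ for $k\le0$ since the line bundle is ample (as in part (1) of Theorem \ref{thm:circle-case}), and $\widetilde{H}_{k\varpi}(X)$ injects into $\bigoplus_{k'>0}H_{k'}(X)$ because $\Phi>0$ componentwise along $\mathbb{R}_+\varpi$ forces the structure-$S^1$ weight to be positive.

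The main obstacle I anticipate is the simultaneous double stationary-phase in the torus variable $\theta$ \emph{and} the radial cometric variable $u$ when $\mathrm{g}>1$: one must check the mixed Hessian is block-triangular in the right way so that the $u$-integration (which already appears in the classical TYZ case) decouples cleanly from the new $(\mathrm{g}-1)$-dimensional transverse torus integration, and then identify the latter determinant intrinsically as $\mathcal{D}(m)^2/\|\Phi(m)\|_m^{\,2}$-type data independent of trivialization. Keeping all of this uniform over $X_\varpi$ and over the polynomially-growing $\mathbf{v}_l$, while tracking the $T_m$-sum through the Heisenberg coordinate changes (Remark before Definition, and Lemma \ref{lem:heis-tg}), is where the bookkeeping is heaviest; the rest is a faithful adaptation of \cite{sz}.
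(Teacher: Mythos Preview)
Your overall strategy is correct and matches the paper's: average over $\mathbb{T}^\mathrm{g}$, insert the Boutet de Monvel--Sj\"ostrand parametrix for $\Pi$, localize near the finite stabilizer $T_m$, and extract the leading term by stationary phase. Two points of comparison are worth noting.

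\textbf{Organization of the stationary phase.} You propose a simultaneous $(\mathrm{g}+1)$-dimensional stationary phase in $(\theta,u)$, and you correctly flag the mixed-Hessian bookkeeping as the main obstacle. The paper sidesteps exactly this obstacle. After localizing near a stabilizer element and rescaling \emph{all} torus variables $\eta\mapsto\eta/\sqrt{k}$, it splits $\mathfrak{t}=\mathrm{span}(\xi_1)\oplus\ker\Phi(m)$ with $\xi_1$ the unit vector along $\Phi(m)$, writes $\eta=\eta_1\xi_1+\eta'$, and observes that the oscillatory part of the phase depends only on $(t,\eta_1)$ via $\Upsilon(t,\eta_1)=t(\eta_1\|\Phi(m)\|+\theta_1-\theta_2)-\eta_1\|\varpi\|$. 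This reduces the stationary-phase step to the two-dimensional one already carried out in the circle case (Theorem~\ref{thm:circle-case}), with large parameter $\sqrt{k}$. The remaining $(\mathrm{g}-1)$-dimensional integral in $\eta'$ is then a genuine Gaussian integral (no oscillation), whose evaluation produces the factor $\mathcal{D}(m)^{-1}(2\pi)^{(\mathrm{g}-1)/2}$ and the extra $e^{-\lambda_\varpi(m)\|\mathbf{v}_1^{(e)}+\mathbf{v}_2\|^2/2}$ that combines with $\psi_2$ to give $H_m$. Your route via the full Hessian (which the paper does compute, but only in the \emph{unscaled} diagonal case of Theorem~\ref{thm:higher-dim-case}) would also work, but the splitting makes the identification of $\mathcal{D}(m)$ and the Gaussian in $\mathbf{v}_l$ more transparent and recycles the circle-case computation verbatim.

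\textbf{Two small corrections.} First, in your averaging formula the integrand should be the \emph{full} Szeg\"{o} kernel $\Pi$, not $\Pi_k$; there is no separate averaging over the structure circle (the radial variable $u$ you mention comes from the FIO representation of $\Pi$, not from a second Fourier coefficient). Second, your argument for Part~(1) is not quite right: ``$\Phi>0$ componentwise'' is not meaningful for a $\mathfrak{t}^\vee$-valued map. The paper's argument (Lemma~\ref{lem:only-pos-k-matter}) uses that $\widetilde{H}_{k\varpi}(X)\cap H_l(X)\neq\{0\}$ forces $k\varpi\in l\,\Phi(M)$; since $M_\varpi\neq\emptyset$ gives $\lambda\varpi\in\Phi(M)$ for some $\lambda>0$, and $\Phi(M)$ is convex, having also $(k/l)\varpi\in\Phi(M)$ with $k\le 0<l$ would force $\mathbf{0}\in\Phi(M)$, contradicting the hypothesis.
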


More generally for any $\xi\in \mathfrak{t}$, let $\xi_X$ be smooth vector field on $X$ induced by the
infinitesimal action of $\xi$. For any $x\in X$, let
$$
\mathfrak{t}_X(x)=:\big\{\xi_X(x)\,:\,\xi\in \mathfrak{t}\big\}\subseteq T_xX.
$$
Thus $\mathfrak{t}_X(x)\subseteq T_xX$ is a vector subspace of $T_xX$, of dimension
g if $\Phi$ is transverse to $\mathbb{R}_+\cdot \varpi$ and $x\in X_\varpi$
(Lemma \ref{lem:moment-map-transverse}).
Then it follows from the proof of Theorem \ref{thm:scaling-limit-general-precise}
that similar expansions hold for rescaled displacements
$x+\upsilon_j/\sqrt{k}$, where $\upsilon_j\in T_xX$ satisfy, say, $\big\|\mathbf{v}_j\big\|\le C\,k^{1/9}$,
$\upsilon_j\in \mathfrak{t}_X(x)^\perp$ (the expression for the leading term will change).

Under the same hypothesis, we can
estimate the asymptotic growth of $\dim \widetilde{H}_{k\varpi}(X)$ as $k\rightarrow +\infty$.
The stabilizer subgroup of
any $x\in X_\varpi=:\pi^{-1}\left(M_\varpi\right)$ for $\widetilde{\mu}$ is finite, and
on a dense open subset of
$X_\varpi$ it is constant. Let then
$L_\varpi\subseteq \mathbb{T}^\mathrm{g}$ be the stabilizer subgroup of a general $x\in X_\varpi$.
Although the proof of Corollary \ref{cor:asymp-bound-dim-gen-case} below works with minor modifications in
the general case, for the sake of brevity let us restrict ourselves to the special case where $L_\varpi$ is trivial,
that is, $\widetilde{\mu}$ is generically free on $X_\varpi$. Then

\begin{cor}
\label{cor:asymp-bound-dim-gen-case}
Under the hypothesis of Theorem \ref{thm:scaling-limit-general-precise}, assume in addition that
$L_\varpi$ is trivial. Then
\begin{eqnarray*}
\lefteqn{\lim_{k\rightarrow +\infty}\left(\|\varpi\| \,\frac k\pi\right)^{-(\mathrm{d}+1-\mathrm{g})}
\,\dim\left(\widetilde{H}_{k\varpi }(X)\right)}\\
&=&
\frac{1}{(2\pi)^{\mathrm{g}-1}}\,\int_{M_\varpi}\|\Phi(m)\|^{-(\mathrm{d}+2-\mathrm{g})}\cdot
\frac{1}{\mathcal{D}(m)}\, dV_{M_\varpi}(m).
\end{eqnarray*}
\end{cor}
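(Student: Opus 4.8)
The plan is to obtain $\dim\big(\widetilde{H}_{k\varpi}(X)\big)$ as the integral over $M$ of the diagonal restriction of the Szeg\"o kernel $\widetilde{\Pi}_{k\varpi}$, and then to feed in the asymptotics from Theorems \ref{thm:higher-dim-case} and \ref{thm:scaling-limit-general-precise}. Concretely, since $\widetilde{\Pi}_{k\varpi}$ is the Schwartz kernel of an orthogonal projector onto a finite-dimensional subspace of $L^2(X)$, one has
$$
\dim\big(\widetilde{H}_{k\varpi}(X)\big)=\mathrm{tr}\,\widetilde{\Pi}_{k\varpi}=\int_X \widetilde{\Pi}_{k\varpi}(x,x)\,dV_X(x).
$$
By Theorem \ref{thm:higher-dim-case}, the integrand is $O(k^{-\infty})$ (uniformly) away from any fixed tubular neighbourhood of $X_\varpi=\pi^{-1}(M_\varpi)$, so only a shrinking neighbourhood of $X_\varpi$ contributes to leading order. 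The first real step is therefore to localize the trace integral to such a neighbourhood and to introduce, along the $\mathrm{g}$ normal directions to $M_\varpi$ in $M$ (the fibres of $N$, identified with $\ker\Phi(m)$ via Lemma \ref{lem:N-ker-Phi-transverse}) together with the $\mathbb{T}^{\mathrm g}$-orbit directions, rescaled coordinates of size $k^{-1/2}$. The orbit directions integrate out trivially because $\widetilde{\Pi}_{k\varpi}(x,x)$ is constant along $\widetilde{\mu}$-orbits; this produces the factor $V_{\mathrm{eff}}$ (the orbit volume), or equivalently, after combining with the transverse normalization, contributes to the geometric constant.

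Next I would apply the scaling asymptotics. On the slice transverse to the orbit, set $x'=x+\mathbf{v}/\sqrt{k}$ with $\mathbf{v}\in N_m$, $\|\mathbf{v}\|\le C k^{1/9}$, and use Theorem \ref{thm:scaling-limit-general-precise}(2) with $\mathbf{v}_1=\mathbf{v}_2=\mathbf{v}$. Since $L_\varpi$ is trivial, for a general $x\in X_\varpi$ the stabilizer $T_m$ is trivial, so the sum $\sum_{t\in T_m}\chi_\varpi(t)^k e^{H_m(d_m\mu_{t^{-1}}(\mathbf{v}),\mathbf{v})}$ reduces to the single term $t=e$, namely $e^{H_m(\mathbf{v},\mathbf{v})}=e^{-2\lambda_\varpi(m)\|\mathbf{v}\|^2}$ (the $\omega_m(\mathbf{v},\mathbf{v})$ term vanishes). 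The change of variables $x\mapsto x'$ in the $\mathrm{g}$ normal directions produces a Jacobian $k^{-\mathrm g/2}$ with respect to $dV_{M_\varpi}$ times the Euclidean volume on $N_m$; crucially the two Euclidean structures on $N_m$ (from $\mathfrak t$ and from $T_mM$) differ by $\mathcal D(m)=\sqrt{\det D(m)}$, and it is here that the factor $1/\mathcal D(m)$ in the answer will get combined with the Gaussian. One then evaluates the Gaussian integral
$$
\int_{N_m}e^{-2\lambda_\varpi(m)\|\mathbf{v}\|^2}\,d\mathbf{v}=\left(\frac{\pi}{2\lambda_\varpi(m)}\right)^{\mathrm g/2}
=\left(\frac{\pi\,\|\Phi(m)\|}{2\,\|\varpi\|}\right)^{\mathrm g/2},
$$
tracking the powers of $k$: the prefactor contributes $k^{\mathrm d+(1-\mathrm g)/2}$, the Jacobian $k^{-\mathrm g/2}$ wait — more carefully, $dV_X=(1/2\pi)\,\alpha\wedge\pi^*dV_M$, the orbit integration over the $\mathrm g$-torus directions and the $k^{-1/2}$-rescaling in the $\mathrm g-1$ transverse-to-$X_\varpi$-within-$X$ normal directions together yield $k^{-(\mathrm g-1)/2}$ (one of the $\mathrm g$ directions is the orbit), and the Gaussian in those $\mathrm g-1$ variables cancels against the $(\sqrt2\pi)^{-(\mathrm g-1)}$ and the extra $\|\varpi\|/\|\Phi(m)\|$-powers, leaving exactly $k^{\mathrm d+1-\mathrm g}$ overall with coefficient $\|\Phi(m)\|^{-(\mathrm d+2-\mathrm g)}/\mathcal D(m)$ integrated against $dV_{M_\varpi}$, times $(2\pi)^{-(\mathrm g-1)}(\|\varpi\|/\pi)^{\mathrm d+1-\mathrm g}$. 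Finally one invokes dominated convergence — legitimate because of the uniform $O(k^{-\infty})$ tail bound from Theorem \ref{thm:higher-dim-case} and the uniform polynomial control of the remainders $R_j$ in the stated range $\|\mathbf v_l\|\le Ck^{1/9}$ — to pass the limit inside the integral and conclude.

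The main obstacle is bookkeeping the measure factors consistently: one must reconcile $dV_X$, the connection-induced splitting $T_xX\cong\mathbb R\times T_mM$, the orbit volume $V_{\mathrm{eff}}$ (appearing via the alternative form \eqref{eqn:alternative-thm-2}), the two metrics on $N_m$ encoded by $\mathcal D(m)$, and the Gaussian normalization constants, so that the surviving constant is precisely $(2\pi)^{-(\mathrm g-1)}$ and no stray powers of $\|\varpi\|$ or $\|\Phi(m)\|$ remain. A clean way to organize this is to work entirely with the alternative leading term \eqref{eqn:alternative-thm-2}, where $V_{\mathrm{eff}}(m)$ is explicit, perform the orbit integral first (giving exactly $V_{\mathrm{eff}}(m)$ and cancelling it), and only then do the transverse Gaussian; with $|T_m|=1$ generically this is the most transparent route, and one checks at the end that it agrees with the form stated in the corollary by the relation between $\|\Phi(m)\|$ and $\|\Phi(m)\|_m$ used just below Theorem \ref{thm:higher-dim-case}. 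Measure-zero loci (the complement of $M_\varpi'$ in $M_\varpi$, where $T_m$ may be larger) do not affect the integral, so no extra care is needed there beyond noting they are negligible.
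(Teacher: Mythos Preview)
Your approach is essentially the paper's: write $\dim\widetilde H_{k\varpi}(X)=\int_X\widetilde\Pi_{k\varpi}(x,x)\,dV_X$, localize near $X_\varpi$ using Theorem~\ref{thm:scaling-limit-general}, rescale the normal variable by $\sqrt{k}$, feed in Theorem~\ref{thm:scaling-limit-general-precise} with $\mathbf v_1=\mathbf v_2=\mathbf v$, integrate the resulting Gaussian, and finish by dominated convergence (as in Corollary~\ref{cor:dim-circle-case}).

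One point to clean up: the normal bundle $N$ of $M_\varpi$ in $M$ has rank $\mathrm g-1$, not $\mathrm g$ (since $M_\varpi$ has real codimension $\mathrm g-1$; equivalently $\ker\Phi(m)$ is $(\mathrm g-1)$-dimensional). You catch this in your ``wait'' paragraph, but the first displayed Gaussian integral, with exponent $\mathrm g/2$, is wrong and should be over $\mathbb R^{\mathrm g-1}$. The paper avoids this muddle by never splitting into ``orbit directions plus normal'': it works directly on $X$, covers $X_\varpi$ by charts with Heisenberg local coordinates adapted so that $N_{x'}\cong\mathbb R^{\mathrm g-1}$, introduces a partition of unity, and for each chart writes $dV_X=\mathcal V(x,\mathbf v)\,du\,d\mathbf v$ with $dV_{X_\varpi}=\mathcal V(x,\mathbf 0)\,du$. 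This makes the Jacobian bookkeeping (your ``main obstacle'') transparent: the rescaling gives exactly $k^{-(\mathrm g-1)/2}$, and the orbit/effective-volume detour via \eqref{eqn:alternative-thm-2} is unnecessary.
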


While we have restricted the exposition to
the complex projective setting, the results in this paper admit natural
generalizations to the almost K\"{a}hler context, following the theory of generalized Szeg\"{o}
kernels in \cite{bg} and \cite{sz}.

In closing, we remark that in recent years the local asymptotics of equivariant components of
Bergman-Szeg\"{o} kernels weighted by toric actions have been studied by several authors (see for example
\cite{hsb}, \cite{bgw}, \cite{stz}).
There are several deep variants in these asymptotics, but the emphasis has been
on working at a level $k$ tending to infinity of the standard circle action, so that in fact
one splits
$H^0\left(M,A^{\otimes k}\right)$ over the irreducibles of the group.
The present point of view is different, inasmuch as the additional
symmetry is considered \textit{per se}, on the same footing as the standard circle action in
the classical TYZ expansion. As exhibited by the previous examples and statements,
this accounts for some sharp differences in the
asymptotic concentrations of the projection kernels, as regards both the rate of growth and
the geometric loci involved. For instance, as $k\rightarrow +\infty$
the \lq non-standard
coherent states\rq\, $\widetilde{\Pi}_{k\varpi}(\cdot,x)$ will concentrate on
the $\mathbb{T}^\mathrm{g}$-orbit of $x$ under $\widetilde{\mu}$, rather
than on the inverse image of the underlying orbit in $M$.

\bigskip

\textbf{Acknowledgments.} I am grateful to the referee for several valuable comments and
for suggesting various improvements in
presentation.

\section{Preliminaries}

\subsection{The equivariant spaces}
\begin{lem}
\label{lem:fd-smooth}
If $\mathbf{0}\not\in \Phi(M)$, then $\widetilde{H}_\varpi(X)$ is finite-dimensional for every $\varpi\in \mathbb{Z}^\mathrm{g}$.
Furthermore, the smooth function $x\mapsto\widetilde{\Pi}_\varpi(x,x)$ ($x\in X$) descends to a smooth function on $M$.
\end{lem}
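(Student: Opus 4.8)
The plan is to realize $\widetilde H_\varpi(X)$ as the $L^2$-kernel of a subelliptic (in fact jointly elliptic, in the sense of H\"ormander) system on the compact manifold $X$, so that finite dimensionality and smoothness of the projection kernel both follow from elliptic regularity. First I would recall the standard setup: $H(X)\subseteq L^2(X)$ is the kernel of the operator $\overline\partial_b$ (the tangential Cauchy--Riemann operator on the CR manifold $X$), or equivalently the range of the Szeg\"o projector $\Pi:L^2(X)\to H(X)$, which is a Fourier integral operator with complex phase of the type studied in \cite{bs}. For $\xi\in\mathfrak t$, write $\xi_X$ for the induced vector field on $X$; since $\widetilde\mu$ preserves the CR structure, $\xi_X$ commutes with $\Pi$ (as a skew-adjoint first-order operator on $H(X)$), and the element $s\in H(X)$ lies in $\widetilde H_\varpi(X)$ precisely when $\mathcal L_{\xi_{jX}}s=i\langle\varpi,\xi_j\rangle\, s$ for a basis $\xi_1,\dots,\xi_{\mathrm g}$ of $\mathfrak t$. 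Hence $\widetilde H_\varpi(X)$ is the common $L^2$-kernel of the operators $\overline\partial_b$ together with $D_j:=\frac1i\mathcal L_{\xi_{jX}}-\langle\varpi,\xi_j\rangle$, $j=1,\dots,\mathrm g$.

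The key point is joint ellipticity. Using the relation $\xi_X=\xi_M^\sharp-\langle\Phi,\xi\rangle\,\partial/\partial\theta$ recalled in the introduction, the characteristic variety of the system $(\overline\partial_b,\xi_{1X},\dots,\xi_{\mathrm gX})$ is contained in the locus where all the horizontal covectors vanish (forced by $\overline\partial_b$, whose symbol is injective on the complex-linear part of the horizontal cotangent space — more precisely, on the relevant half), so only the fiber direction $d\theta$ survives, and there the symbol of $\xi_{jX}$ is $-\langle\Phi(m),\xi_j\rangle\,\tau$ where $\tau$ is the $d\theta$-coordinate; these vanish simultaneously iff $\langle\Phi(m),\xi_j\rangle=0$ for all $j$, i.e. iff $\Phi(m)=\mathbf 0$. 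Since $\mathbf 0\notin\Phi(M)$, the system is jointly elliptic on $T^*X\setminus 0$; adding the zeroth-order shifts $-\langle\varpi,\xi_j\rangle$ does not change the principal symbols, so $(\overline\partial_b,D_1,\dots,D_{\mathrm g})$ is jointly elliptic as well. By the elliptic estimate for overdetermined systems (Theorem 19.5.1 of \cite{hor-III}), the common kernel in $\mathcal D'(X)$ is finite-dimensional and consists of smooth sections; in particular $\dim\widetilde H_\varpi(X)<\infty$.

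For the second assertion, fix an orthonormal basis $s_1,\dots,s_N$ of $\widetilde H_\varpi(X)$ (smooth, by the above). Then $\widetilde\Pi_\varpi(x,y)=\sum_{r=1}^N s_r(x)\overline{s_r(y)}$ is a smooth function on $X\times X$; restricting to the diagonal, $x\mapsto\widetilde\Pi_\varpi(x,x)=\sum_r|s_r(x)|^2$ is smooth on $X$. It remains to check that this descends to $M$, i.e. is invariant under the structure circle action $r_\vartheta:X\to X$. Each $s_r\in H(X)$ decomposes into its $S^1$-isotypical components $s_r=\sum_k s_{r,k}$ with $s_{r,k}\in H_k(X)$, and since $r_\vartheta$ acts on $H_k(X)$ by the scalar $e^{ik\vartheta}$ of modulus one, $|s_r(r_\vartheta x)|^2$ differs from $|s_r(x)|^2$ only through cross terms $s_{r,k}(x)\overline{s_{r,k'}(x)}e^{i(k-k')\vartheta}$ with $k\ne k'$. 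These cancel in the sum over $r$: indeed, the operator $P_\varpi$ projecting onto $\widetilde H_\varpi(X)$ commutes with the structure $S^1$-action (the two torus actions commute since $\widetilde\mu$ lifts $\mu$ and the structure action is central), so $P_\varpi$ preserves each $H_k(X)$, and hence $\widetilde\Pi_\varpi$ itself is invariant under $r_\vartheta$ acting on either factor: $\widetilde\Pi_\varpi(r_\vartheta x,r_\vartheta x)=\widetilde\Pi_\varpi(x,x)$. Therefore $x\mapsto\widetilde\Pi_\varpi(x,x)$ is constant on $\pi$-fibers and defines a smooth function on $M$ (the identifications between functions, densities and half-densities furnished by the chosen volume forms make this canonical). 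The main obstacle is the verification of joint ellipticity — writing out the principal symbol of $\overline\partial_b$ in Heisenberg-type coordinates and confirming that its kernel on $T^*X\setminus 0$ meets the fiber direction only, so that the condition $\mathbf 0\notin\Phi(M)$ is exactly what is needed; once this is in place, everything else is formal.
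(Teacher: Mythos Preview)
Your argument is correct, but it follows the microlocal route that the paper explicitly sets aside. In the introduction the author notes that finite dimensionality follows from joint ellipticity of $(\overline\partial_b,\xi_{1X},\ldots,\xi_{\mathrm gX})$ together with Theorem~19.5.1 of \cite{hor-III}, and then says ``Nonetheless, a direct elementary symplectic proof will be given in \S 2, based on the theory of \cite{gs1}.'' That elementary proof is what appears as the proof of Lemma~\ref{lem:fd-smooth}: one decomposes $\widetilde H_\varpi(X)=\bigoplus_{k\ge 0}\widetilde H_\varpi(X)\cap H_k(X)$ using the commuting structure $S^1$-action, and then invokes \cite{gs1} to see that $\widetilde H_\varpi(X)\cap H_k(X)\neq\{0\}$ forces $\varpi\in k\,\Phi(M)$, hence $\|\varpi\|/A\le k\le \|\varpi\|/a$ with $a=\min\|\Phi\|>0$, $A=\max\|\Phi\|$; only finitely many $k$ survive. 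For the descent to $M$, the paper simply chooses the orthonormal basis to be a union of bases of the pieces $\widetilde H_\varpi(X)\cap H_k(X)$, so that each $|s_j|^2$ is already $S^1$-invariant.

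What each approach buys: the paper's argument avoids the H\"ormander machinery and, more importantly, produces the explicit range $\|\varpi\|/A\le k\le \|\varpi\|/a$ for the contributing $k$'s, which is exactly what is reused in Lemma~\ref{lem:only-pos-k-matter} and in the first statements of Theorems~\ref{thm:circle-case} and~\ref{thm:scaling-limit-general-precise}. Your approach yields smoothness of the full kernel $\widetilde\Pi_\varpi$ on $X\times X$ in one stroke and is closer in spirit to the FIO analysis used later, but it does not by itself give the quantitative bound on $k$. Your descent argument via commutativity of $P_\varpi$ with the structure $S^1$-action is fine (and is morally the same as the paper's), though the intermediate discussion of cross terms is unnecessary once you observe that $r_\vartheta$ commutes with $P_\varpi$.
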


\begin{proof}
Since the standard circle action on $X$ commutes with $\widetilde{\mu}$, it leaves $\widetilde{H}_\varpi(X)$ invariant
and so
$$
\widetilde{H}_\varpi(X)=\bigoplus_{k\ge 0}\widetilde{H}_\varpi(X)\cap H_k(X).
$$
Since each $H_k(X)$ is finite-dimensional, it suffices to show that for any $\varpi$ one has
$\widetilde{H}_\varpi(X)\cap H_k(X)\neq \{0\}$ for at most finitely many $k$'s. By the theory of \cite{gs1},
one has $\widetilde{H}_\varpi(X)\cap H_k(X)= \{0\}$ unless $\varpi\in k\cdot \Phi(M)$. Therefore, if $a=:\min\|\Phi\|$,
$A=:\max\|\Phi\|$, then $0<a\le A$, and $\widetilde{H}_\varpi(X)\cap H_k(X)= \{0\}$ unless $ka\le \|\varpi\|\le kA$, that is,
unless $\|\varpi\|/A\le k\le \|\varpi\|/a$.

Now an orthonormal basis $(s_j)$ of $\widetilde{H}_\varpi(X)$ can be built by taking the union of orthonormal basis of
$\widetilde{H}_\varpi(X)\cap H_k(X)$ for each $k$ for which the latter intersection is non-empty. If $s\in H_k(X)$,
on the other hand,
then clearly $s\left(e^{i\theta}\cdot x\right)\cdot \overline{s\left(e^{i\theta}\cdot y\right)}=
s(x)\cdot \overline{s\left(y\right)}$ for any $x,y\in X$, where $e^{i\theta}\cdot x$ denotes the standard circle action.
Hence $x\mapsto \big|s(x)\big|^2$ descends to a smooth function on $M$.
Since
$$
\widetilde{\Pi}_\varpi(x,x)=\sum_j \big|s_j(x)\big|^2
$$
and each of the finitely many summands descends to a smooth function on $M$, the same is true of $\widetilde{\Pi}_\varpi(x,x)$.
\end{proof}

\begin{lem}
\label{lem:only-pos-k-matter}
If $\mathbf{0}\not\in \Phi(M)$ and $\Phi^{-1}(\mathbb{R}_+\cdot \varpi)\neq \emptyset$,
then
$\widetilde{H}_{k\varpi}(X)=\{0\}$ for every $k\le 0$.
\end{lem}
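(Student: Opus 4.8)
The plan is to deduce the statement from the support condition for equivariant components already exploited in the proof of Lemma~\ref{lem:fd-smooth}, combined with convexity of the moment image. As recalled there, the structure circle action on $X$ commutes with $\widetilde\mu$; hence $\widetilde H_{k\varpi}(X)$ is invariant under the structure $S^1$ and decomposes as
$$
\widetilde H_{k\varpi}(X)=\bigoplus_{j\ge 0}\big(\widetilde H_{k\varpi}(X)\cap H_j(X)\big),\qquad H_j(X)\cong H^0\!\left(M,A^{\otimes j}\right).
$$
By the theory of \cite{gs1}, $\widetilde H_{k\varpi}(X)\cap H_j(X)=\{0\}$ unless $k\varpi\in j\cdot\Phi(M)$. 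Since $\mathbf 0\notin\Phi(M)$ while $\Phi^{-1}(\mathbb R_+\cdot\varpi)\neq\emptyset$, necessarily $\varpi\neq\mathbf 0$; thus for $k<0$ the summand with $j=0$ vanishes at once, and it remains to show that $j\cdot\Phi(M)$ contains no point $k\varpi$ with $j\ge 1$ and $k\le 0$ --- equivalently, that $\Phi(M)$ is disjoint from the closed ray $\mathbb R_{\le 0}\cdot\varpi$.

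For this I would use that $M$, being projective, is compact and connected, so that by the Atiyah--Guillemin--Sternberg theorem $\Phi(M)\subseteq\mathfrak t^\vee$ is a compact convex polytope. Because $\mathbf 0\notin\Phi(M)$, the point $p_0\in\Phi(M)$ nearest the origin yields a linear functional $\ell=\langle p_0,\cdot\rangle$ with $\ell\ge|p_0|^2>0$ throughout $\Phi(M)$ (by the standard variational characterization of $p_0$). Choosing $m'\in M$ with $\Phi(m')=t\,\varpi$ for some $t>0$, we find $t\,\ell(\varpi)=\ell\big(\Phi(m')\big)>0$, hence $\ell(\varpi)>0$; therefore $\ell(s\,\varpi)=s\,\ell(\varpi)\le 0$ for every $s\le 0$, so $s\,\varpi\notin\Phi(M)$. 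This gives $\Phi(M)\cap\mathbb R_{\le 0}\cdot\varpi=\emptyset$, whence $\widetilde H_{k\varpi}(X)\cap H_j(X)=\{0\}$ for all $j\ge 1$ and all $k\le 0$; combined with the vanishing of the $j=0$ part, $\widetilde H_{k\varpi}(X)=\{0\}$.

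The argument is short, and the only substantive ingredient beyond the bookkeeping and the support result of \cite{gs1} is convexity of $\Phi(M)$: that is exactly what prevents a moment image avoiding the origin from meeting both the ray $\mathbb R_+\cdot\varpi$ and its opposite, and it is where the two hypotheses $\mathbf 0\notin\Phi(M)$ and $\Phi^{-1}(\mathbb R_+\cdot\varpi)\neq\emptyset$ are used together. The one point needing a word is the endpoint $k=0$, where $k\varpi=\mathbf 0$: here $\widetilde H_{\mathbf 0}(X)\cap H_j(X)=\{0\}$ for every $j\ge 1$ directly from $\mathbf 0\notin\Phi(M)$, so the only possible contribution to $\widetilde H_{\mathbf 0}(X)$ is the structure-circle level-zero piece $H^0(M,\mathcal O_M)$, which is disposed of by the conventions on $H(X)$ in force.
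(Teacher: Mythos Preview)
Your argument is correct and follows essentially the same route as the paper: decompose over the structure $S^1$, invoke the support constraint from \cite{gs1}, and use convexity of $\Phi(M)$ together with the two hypotheses to rule out $k\varpi\in j\,\Phi(M)$ for $k\le 0$, $j\ge 1$. The paper's convexity step is slightly more direct---if $(k/l)\varpi\in\Phi(M)$ with $k/l\le 0$ and $\lambda\varpi\in\Phi(M)$ with $\lambda>0$, the segment joining them lies in $\Phi(M)$ and contains $\mathbf 0$---whereas you take the equivalent separating-hyperplane detour; you also explicitly flag the $k=0$ endpoint, which the paper passes over in silence.
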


\begin{proof}
Suppose to the contrary that $\widetilde{H}_{k\varpi}(X)$ is non-zero for some $k\le 0$.
Then there exits $l>0$ such that $\widetilde{H}_{k\varpi}(X)\cap H_l(X)\neq \{0\}$,
and therefore $k\varpi\in l\Phi(M)$. Thus, $(k/l)\,\varpi\in \Phi(M)$, and
on the other hand $\lambda\,\varpi\in \Phi(M)$ for some $\lambda>0$.
Since $\Phi(M)$ is convex \cite{gs0}, this forces $\mathbf{0}\in \Phi(M)$ and therefore
a contradiction.
\end{proof}

\subsection{The geometric setting}
\begin{lem}
\label{lem:connected}
$M_\varpi=\Phi^{-1}(\mathbb{R}_+\cdot \varpi)$ is connected for any $\varpi$.
\end{lem}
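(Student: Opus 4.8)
The plan is to exploit the convexity theory of Atiyah–Guillemin–Sternberg together with the standard fact that the fibres of a moment map are connected. First I would recall that, since $\mathbf{0}\notin\Phi(M)$, the ray $\mathbb{R}_+\cdot\varpi$ meets the moment polytope $\Delta=\Phi(M)$ in a (possibly empty) line segment $I=[\mathbf{a},\mathbf{b}]$, where $\mathbf{a},\mathbf{b}$ are the entry and exit points of the ray through the convex body $\Delta$; convexity of $\Delta$ (Atiyah, Guillemin–Sternberg, as cited in \cite{gs0}) is what guarantees that $\mathbb{R}_+\cdot\varpi\cap\Delta$ is an interval rather than a more complicated set. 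Then $M_\varpi=\Phi^{-1}(I)$. The key input is that for a Hamiltonian torus action the preimage $\Phi^{-1}(F)$ of any face $F$ of the polytope is connected, and more generally $\Phi^{-1}(C)$ is connected for any convex subset $C\subseteq\Delta$; this is part of the Atiyah–Guillemin–Sternberg package (one shows $\Phi$ is an open map onto $\Delta$ with connected fibres, and these two properties pass to preimages of convex sets by a standard connectedness argument). Since $I$ is convex, $M_\varpi=\Phi^{-1}(I)$ is connected.

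More explicitly, if one does not wish to invoke connectedness of preimages of convex sets as a black box, I would argue as follows. Connectedness of the individual fibres $\Phi^{-1}(\eta)$, $\eta\in\Delta$, is the Atiyah–Guillemin–Sternberg fibre-connectedness theorem. Parametrize $I$ by $\eta(s)$, $s\in[0,1]$, with $\eta(0)=\mathbf{a}$, $\eta(1)=\mathbf{b}$. Given $p,q\in M_\varpi$, say $p\in\Phi^{-1}(\eta(s_0))$, $q\in\Phi^{-1}(\eta(s_1))$ with $s_0\le s_1$: using that $\Phi$ restricted over a neighbourhood of $I$ behaves like a submersion onto its image in the directions transverse to $\ker\Phi$ (here the transversality of $\Phi$ to $\mathbb{R}_+\cdot\varpi$ is not assumed in this lemma, so one instead uses the local normal form / openness of $\Phi$ onto $\Delta$), one can push $p$ along a path covering $\eta([s_0,s_1])$ to reach a point $p'\in\Phi^{-1}(\eta(s_1))$; then $p'$ and $q$ lie in the same fibre, hence are joined by a path in $\Phi^{-1}(\eta(s_1))\subseteq M_\varpi$ by fibre-connectedness. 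Concatenating gives a path from $p$ to $q$ inside $M_\varpi$.

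The main obstacle is making the lifting-of-paths step precise without any transversality hypothesis: $\Phi$ need not be a submersion near $M_\varpi$, so "pushing $p$ along $I$" must be justified via the local convexity theory. The cleanest route is therefore the first one — cite the Atiyah–Guillemin–Sternberg result in the strong form that $\Phi^{-1}(C)$ is connected whenever $C\subseteq\Delta$ is convex (equivalently, apply the convexity theorem to a generic small perturbation, or use that $\Phi$ is an open mapping onto $\Delta$ with connected fibres, from which connectedness of $\Phi^{-1}(\text{convex})$ follows by a standard argument that a locally-connected-fibre open map over a connected locally convex base has connected preimages of connected sets). I would state it that way, with a one-line reference to \cite{gs0} and the remark that $I=\mathbb{R}_+\cdot\varpi\cap\Phi(M)$ is convex because $\Phi(M)$ is.
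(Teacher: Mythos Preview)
Your proposal is correct in its ingredients---convexity of $\Phi(M)$ reduces the image to a segment, and connectedness of moment-map fibres finishes the job---and these are exactly the two inputs the paper uses. The difference is one of packaging: you want either to invoke ``$\Phi^{-1}(\text{convex})$ is connected'' as a black box, or to lift paths along the segment (a step you rightly flag as delicate without transversality). The paper instead writes out the ``standard argument'' you allude to, and it is shorter and cleaner than either of your routes: assume $M_\varpi=A\sqcup B$ with $A,B$ nonempty closed; then $\Phi(A),\Phi(B)$ are closed (by compactness of $M_\varpi$) and cover the connected segment $[a\varpi,b\varpi]$, hence meet at some $c\varpi$; the fibre $\Phi^{-1}(c\varpi)$ is then split by $A$ and $B$, contradicting fibre connectedness (the paper cites Lerman \cite{l} for this). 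No openness of $\Phi$ and no path-lifting are needed---just the pigeonhole on a connected interval. It is worth internalizing this form of the argument, since it sidesteps exactly the obstacle you identified.
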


\begin{proof}
Since $\mathbf{0}\not\in \Phi(M)$ by assumption, $M_\varpi$ is closed in $M$, hence compact.
Also, since $\Phi(M)\subseteq \mathfrak{t}^*\cong \mathbb{R}^\mathrm{g}$ is convex, if $M_\varpi\neq \emptyset$
then
$$
\Phi(M_\varpi)=\Phi(M)\cap \big(\mathbb{R}_+\cdot \varpi\big)=[a\varpi,b\varpi]
$$
for some $0<a<b$.
Let $A,B\subseteq M_\varpi$ be non-empty disjoint closed subsets such that $M_\varpi=A\cup B$.
Then $\Phi(A),\,\Phi(B)$ are non-empty closed subsets of $[a\varpi,b\varpi]$, and
$\Phi(A)\cup \Phi(B)=[a\varpi,b\varpi]$. Hence there is $c\in [a,b]$ such that
$c\varpi\in \Phi(A)\cap \Phi(B)$. Thus, $A\cap \Phi^{-1}(c\varpi)\neq\emptyset$,
$B\cap \Phi^{-1}(c\varpi)\neq\emptyset$. Therefore,
$$
\Phi^{-1}(c\varpi)=\left(\Phi^{-1}(c\varpi)\cap A\right)\cup \left(\Phi^{-1}(c\varpi)\cap B\right)
$$
is a union of two non-empty disjoint closed subsets. This is absurd, since each level set of $\Phi$
is connected \cite{l}.
\end{proof}

For $m\in M'$, consider the Euclidean vector space
$\mathfrak{t}_m=\big(\mathfrak{t},\langle\,,\,\rangle _m\big)$, where $\langle\,,\,\rangle_m$ is induced by pull-back
under the injective linear map $\mathrm{val}_m:\mathfrak{t}\rightarrow T_mM$; we denote by $\mathfrak{t}^*_m=:\big(\mathfrak{t}^*,\langle\,,\,\rangle _m\big)$ the Euclidean structure induced
on $\mathfrak{t}^*$ under duality.
Also, we set $\mathfrak{t}_M(m)=:\mathrm{val}_m\big(\mathfrak{t}\big)\subseteq T_mM$.
Thus $\mathfrak{t}_M$ is the rank-g vector sub-bundle of $\left.TM\right|_{M'}$
generated by the vector fields $\xi_M$ ($\xi\in\mathfrak{t}$). Obviously $\mathrm{val}_m$
is an isometry $\mathfrak{t}_m\cong \mathfrak{t}_M(m)$.

Let $J:TM\rightarrow TM$ be the complex structure. For any $m\in M'$, the Riemannian orthocomplement to
the fiber of $\Phi$ through $m$ is
\begin{equation}
\label{eqn:riemannian-orth-phi}
T_m\Big(\Phi^{-1}\big(\Phi(m)\big)\Big)^\perp =J_m\big(\mathfrak{t}_M(m)\big),
\end{equation}
and $d_m\Phi$ induces by restriction a linear isomorphism $\varsigma_m:J_m\big(\mathfrak{t}_M(m)\big)\rightarrow \mathfrak{t}^*$
(a rephrasing of the metric pairing $\langle\,,\,\rangle_m$).

\begin{lem}
\label{lem:varsigma-isometry}
$\varsigma_m$ is an isometry $J_m\big(\mathfrak{t}_M(m)\big)\cong \mathfrak{t}^*_m$.
\end{lem}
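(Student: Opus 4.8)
The plan is to unwind all the identifications and reduce the claim to the elementary fact that for an injective linear map between inner product spaces, the metric it pulls back and the metric its ``transpose-inverse'' induces on the dual are dual to each other. Concretely, write $\ell=\mathrm{val}_m:\mathfrak{t}\hookrightarrow T_mM$, an isometry onto $\mathfrak{t}_M(m)$ by definition of $\langle\,,\,\rangle_m$. The first step is to make the map $\varsigma_m$ explicit: for $w\in J_m\big(\mathfrak{t}_M(m)\big)$, the functional $\varsigma_m(w)=d_m\Phi(w)\in\mathfrak{t}^*$ is characterized, via the defining relation of the moment map $d\langle\Phi,\xi\rangle = \iota_{\xi_M}(2\omega_m)$ (note the factor $2$, since the action is Hamiltonian for $2\omega$), by
$$
\langle \varsigma_m(w),\xi\rangle \;=\; 2\,\omega_m\big(\xi_M(m),w\big)\qquad(\xi\in\mathfrak{t}).
$$
So I would first record this formula, then use $\omega_m(\cdot,\cdot)=g_m(J_m\cdot,\cdot)$ to rewrite the right-hand side as $2\,g_m\big(J_m\,\ell(\xi),w\big)$; since $w\in J_m\big(\mathfrak{t}_M(m)\big)$ we may write $w=J_m\,\ell(\eta)$ for a unique $\eta\in\mathfrak{t}$, and because $J_m$ is a $g_m$-isometry this becomes $2\,g_m\big(\ell(\xi),\ell(\eta)\big)=2\,\langle\xi,\eta\rangle_m$. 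Hence under the isometry $\mathfrak{t}_m\xrightarrow{\ \sim\ }J_m\big(\mathfrak{t}_M(m)\big)$, $\eta\mapsto J_m\ell(\eta)$, the map $\varsigma_m$ becomes $\eta\mapsto 2\,\langle\eta,\cdot\rangle_m$, i.e.\ (twice) the canonical musical isomorphism $\mathfrak{t}_m\to\mathfrak{t}_m^*$.

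The second step is then purely formal: the musical isomorphism $\flat:\mathfrak{t}_m\to\mathfrak{t}_m^*$ induced by an inner product is, by the very definition of the dual inner product, an isometry $\mathfrak{t}_m\cong\mathfrak{t}_m^*$. Composing the isometry $\eta\mapsto J_m\ell(\eta)$ (from $\mathfrak{t}_m$ onto $J_m\big(\mathfrak{t}_M(m)\big)$ with its induced Riemannian metric) with $\varsigma_m$ gives $2\,\flat$, and so $\varsigma_m$ itself is an isometry $J_m\big(\mathfrak{t}_M(m)\big)\cong\mathfrak{t}^*_m$ up to the harmless overall factor $2$ — which one must be careful to track, since it will feed into the constant $\mathcal{D}(m)$ and the leading terms of the theorems; here I am following the excerpt's convention in which $\mathfrak{t}^*_m$ carries the Euclidean structure ``induced under duality,'' so the factor is absorbed consistently in that convention. (If one instead normalizes $\Phi$ so that $d\langle\Phi,\xi\rangle=\iota_{\xi_M}\omega$, the factor disappears; I will state which normalization is in force.)

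The only genuine point to verify carefully is the identification \eqref{eqn:riemannian-orth-phi}, namely $T_m\big(\Phi^{-1}(\Phi(m))\big)^\perp = J_m\big(\mathfrak{t}_M(m)\big)$, which the excerpt already asserts; granting it, the argument above is forced. I expect the main (and really only) obstacle to be bookkeeping: keeping straight the three inner-product spaces in play ($\mathfrak{t}$ with $\langle\,,\,\rangle_m$, its dual $\mathfrak{t}^*$ with the dual structure, and the subspace $J_m\big(\mathfrak{t}_M(m)\big)\subseteq T_mM$ with the restricted Riemannian metric), and making sure the moment-map normalization constant is handled once and not twice. There is no analytic difficulty here — the lemma is a linear-algebra consequence of the definitions — so the write-up should be short.
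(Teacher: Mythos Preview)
Your approach is essentially the same as the paper's: both arguments identify $\varsigma_m\circ J_m\circ\mathrm{val}_m:\mathfrak{t}_m\to\mathfrak{t}^*_m$ with (a scalar multiple of) the musical isomorphism for $\langle\,,\,\rangle_m$, and then invoke the fact that the musical map is an isometry by definition of the dual metric. The paper simply takes $f,g\in\mathfrak{t}^*$ dual to $\xi,\eta\in\mathfrak{t}$, asserts $f=\varsigma_m\big(J_m(\xi_M(m))\big)$, and checks that both sides have inner product $\langle\xi,\eta\rangle_m$.

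You are right to flag the factor of $2$: since the action is Hamiltonian for $2\omega$, one has $d_m\Phi^\xi(w)=2\,\omega_m(\xi_M(m),w)$, and hence $\varsigma_m\big(J_m(\eta_M(m))\big)(\xi)=2\langle\xi,\eta\rangle_m$, not $\langle\xi,\eta\rangle_m$. The paper's ``readily seen'' step silently drops this factor. Strictly speaking, with these conventions $\varsigma_m$ is a similitude of ratio $2$ rather than an isometry. Your hedge that the factor is ``absorbed consistently in that convention'' is not quite accurate either; however, the only downstream use of the lemma is the \emph{orthogonal} direct sum in Lemma~\ref{lem:orthogonal-direct-t-M}, for which a conformal isometry suffices, so nothing in the paper is affected. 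If you write this up, state plainly that $\varsigma_m$ is conformal with factor $2$ (or that $\tfrac{1}{2}\varsigma_m$ is the isometry), rather than leaving it as a parenthetical.
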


\begin{proof}
Suppose $f,g\in \mathfrak{t}^*$ and $\xi,\eta\in \mathfrak{t}$ are such that
$f(\mu)=\langle\mu,\xi\rangle_m$,
$g(\mu)=\langle\mu,\eta\rangle_m$ ($\mu\in \mathfrak{t}$).
Then by definition $\langle f,g\rangle_m=\langle\xi,\eta\rangle_m$.

On the other hand, it is readily seen that $f=\varsigma_m\left(J_m\left(\xi_M(m)\right)\right)$, $g=\varsigma_m\left(J_m\left(\eta_M(m)\right)\right)$.
Since $J_m$ is orthogonal,
$$
g_m\Big(J_m\big(\xi_M(m)\big),J_m\big(\eta_M(m)\Big)=g_m\Big(\xi_M(m),\eta_M(m)\Big)=
\langle\xi,\eta\rangle_m.
$$
\end{proof}

Now suppose $m\in M'_\varpi$ ($\varpi\neq \mathbf{0}$), so that $\Phi(m)=\lambda\,\varpi$
for some $\lambda>0$. Consider
the unique $\xi=\xi(m,\varpi)\in \mathfrak{t}$ such that $d_m\Phi\Big(J_m\big(\xi_M(m)\big)\Big)=\varpi$; then
\begin{equation}
\label{eqn:riemannian-orth-phi-hom}
T_m\Big(\Phi^{-1}\big(\mathbb{R}_+\cdot \varpi\big)\Big) =T_m\left(\Phi^{-1}\big(\lambda\,\varpi\big)\right)
\oplus \mathrm{span}\Big\{J_m\big(\xi_M(m)\big)\Big\}.
\end{equation}
It follows from (\ref{eqn:riemannian-orth-phi}) and (\ref{eqn:riemannian-orth-phi-hom}) that
\begin{equation}
\label{eqn:riemannian-orth-phi-hom-1}
T_m\Big(\Phi^{-1}\big(\mathbb{R}_+\cdot \varpi\big)\Big)^\perp =J_m\big(\mathfrak{t}_M(m)\big)
\cap \mathrm{span}\Big\{J_m\big(\xi_M(m)\big)\Big\}^\perp.
\end{equation}

The left-hand side of (\ref{eqn:riemannian-orth-phi-hom-1}) is the fiber at $m$ of the normal
bundle of $M_\varpi\cap M'$ in $M'$, $N_m'$. We then have an orthogonal direct sum
decomposition
\begin{equation}
\label{eqn:orthogonal-Tm}
J_m\big(\mathfrak{t}_M(m)\big)=N_m'\oplus \mathrm{span}\Big\{J_m\big(\xi_M(m)\big)\Big\}.
\end{equation}

\begin{lem}
\label{lem:orthogonal-direct-t-M}
Set $F(m)=:d_m\Phi\big(N_m'\big)$. Then there is an orthogonal direct sum
$$
\mathfrak{t}^*_m= F(m)\oplus \mathrm{span}\{\varpi\}= F(m)\oplus \mathrm{span}\{\Phi(m)\}.
$$
\end{lem}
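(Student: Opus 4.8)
The plan is to obtain the asserted splitting by simply transporting the orthogonal decomposition (\ref{eqn:orthogonal-Tm}) through the isometry $\varsigma_m$ of Lemma \ref{lem:varsigma-isometry}. So this will be a short bookkeeping argument built entirely on the preceding lemmas, with no genuine obstacle.

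First I would dispose of the second equality: since $m\in M'_\varpi$, we have $\Phi(m)=\lambda\,\varpi$ for some $\lambda>0$, so $\mathrm{span}\{\varpi\}=\mathrm{span}\{\Phi(m)\}$, and it suffices to prove the orthogonal direct sum $\mathfrak{t}^*_m=F(m)\oplus \mathrm{span}\{\varpi\}$. Next I would recall that $\varsigma_m$ is the restriction of $d_m\Phi$ to $J_m\big(\mathfrak{t}_M(m)\big)$, and that by Lemma \ref{lem:varsigma-isometry} it is an isometry onto $\mathfrak{t}^*_m$. By (\ref{eqn:orthogonal-Tm}) — equivalently (\ref{eqn:riemannian-orth-phi-hom-1}) — we have the orthogonal direct sum
$$
J_m\big(\mathfrak{t}_M(m)\big)=N_m'\oplus \mathrm{span}\Big\{J_m\big(\xi_M(m)\big)\Big\}
$$
inside $T_mM$; in particular $N_m'$ lies in the domain of $\varsigma_m$. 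Applying $\varsigma_m$ to both sides, and using that a linear isometry carries orthogonal direct sums to orthogonal direct sums, I would get the orthogonal decomposition
$$
\mathfrak{t}^*_m=\varsigma_m\big(N_m'\big)\oplus \varsigma_m\Big(\mathrm{span}\big\{J_m(\xi_M(m))\big\}\Big).
$$
Then I would identify the two summands: $\varsigma_m(N_m')=d_m\Phi(N_m')=F(m)$ by the very definition of $F(m)$, while $\varsigma_m\big(\mathrm{span}\{J_m(\xi_M(m))\}\big)=\mathrm{span}\big\{d_m\Phi\big(J_m(\xi_M(m))\big)\big\}=\mathrm{span}\{\varpi\}$, because $\xi=\xi(m,\varpi)$ was chosen precisely so that $d_m\Phi\big(J_m(\xi_M(m))\big)=\varpi$. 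This gives $\mathfrak{t}^*_m=F(m)\oplus\mathrm{span}\{\varpi\}$ orthogonally, hence the claim.

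The only point that deserves an explicit line — and the closest thing here to a subtlety — is the verification that $N_m'$ really is contained in the domain $J_m\big(\mathfrak{t}_M(m)\big)$ of $\varsigma_m$, so that the definition $F(m)=\varsigma_m(N_m')=d_m\Phi(N_m')$ is unambiguous and the identification of $\varsigma_m$ with the restriction of $d_m\Phi$ applies; but this is exactly what (\ref{eqn:riemannian-orth-phi-hom-1}) (or the summand $N_m'$ in (\ref{eqn:orthogonal-Tm})) records. Everything else is immediate from the fact that isometries preserve orthogonality and map direct sums to direct sums.
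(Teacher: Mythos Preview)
Your proposal is correct and follows exactly the paper's own argument: apply the isometry $\varsigma_m$ of Lemma~\ref{lem:varsigma-isometry} to the orthogonal decomposition (\ref{eqn:orthogonal-Tm}), then identify the images of the two summands. The paper's proof is the one-line ``Just apply the isometry $\varsigma_m$ to (\ref{eqn:orthogonal-Tm})''; you have simply spelled out the bookkeeping.
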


\begin{proof}
Just apply the isometry $\varsigma_m$ to (\ref{eqn:orthogonal-Tm}).
\end{proof}

We can give the following alternative description of $N'$.

\begin{lem}
\label{lem:N-ker-Phi}
For any $m\in M'_\varpi$, we have
$N'_m=J_m\circ \mathrm{val}_m\big(\ker \Phi(m)\big)$.
\end{lem}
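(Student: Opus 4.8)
The plan is to identify both sides as $(\mathrm{g}-1)$-dimensional subspaces of $J_m\big(\mathfrak{t}_M(m)\big)$ and reduce the asserted equality to a single orthogonality relation, which then follows from the moment map identity already exploited in the proof of Lemma~\ref{lem:varsigma-isometry}. Write $\Phi(m)=\lambda\,\varpi$ with $\lambda>0$, and let $\xi=\xi(m,\varpi)\in\mathfrak{t}$ be the element appearing in (\ref{eqn:riemannian-orth-phi-hom}), so that $d_m\Phi\big(J_m(\xi_M(m))\big)=\varpi$. First I would note that both sides of the claimed identity lie inside $J_m\big(\mathfrak{t}_M(m)\big)$ and have dimension $\mathrm{g}-1$: the right-hand side because $\ker\Phi(m)\subseteq\mathfrak{t}$ is a hyperplane (as $\Phi(m)\neq\mathbf{0}$) and $J_m\circ\mathrm{val}_m$ is injective on $M'$, while $N'_m$ has dimension $\mathrm{g}-1$ by (\ref{eqn:riemannian-orth-phi-hom-1}). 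Hence it suffices to prove the inclusion $J_m\circ\mathrm{val}_m\big(\ker\Phi(m)\big)\subseteq N'_m$. By the orthogonal decomposition (\ref{eqn:orthogonal-Tm}), $N'_m$ is precisely the orthogonal complement of $\mathrm{span}\{J_m(\xi_M(m))\}$ inside $J_m\big(\mathfrak{t}_M(m)\big)$, so the inclusion amounts to showing that $g_m\big(J_m(\eta_M(m)),J_m(\xi_M(m))\big)=0$ for every $\eta\in\ker\Phi(m)$.

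To check this, fix $\eta\in\ker\Phi(m)$ and use that $J_m$ is orthogonal: $g_m\big(J_m(\eta_M(m)),J_m(\xi_M(m))\big)=g_m\big(\eta_M(m),\xi_M(m)\big)=\langle\eta,\xi\rangle_m$, by the very definition of $\langle\,,\,\rangle_m$. Exactly as in the proof of Lemma~\ref{lem:varsigma-isometry}, the functional $\varsigma_m\big(J_m(\xi_M(m))\big)=d_m\Phi\big(J_m(\xi_M(m))\big)\in\mathfrak{t}^*$ is the one sending $\eta\mapsto\langle\xi,\eta\rangle_m$; since by construction $d_m\Phi\big(J_m(\xi_M(m))\big)=\varpi$, this yields $\langle\xi,\eta\rangle_m=\langle\varpi,\eta\rangle$ (natural pairing) for all $\eta\in\mathfrak{t}$. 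Because $\Phi(m)=\lambda\,\varpi$ with $\lambda>0$, for $\eta\in\ker\Phi(m)$ we conclude $\langle\xi,\eta\rangle_m=\lambda^{-1}\langle\Phi(m),\eta\rangle=0$, which is the required orthogonality. Therefore $J_m\circ\mathrm{val}_m\big(\ker\Phi(m)\big)\subseteq N'_m$, and the dimension count upgrades this to the desired equality $N'_m=J_m\circ\mathrm{val}_m\big(\ker\Phi(m)\big)$.

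I expect no genuine obstacle here: the only point requiring care is the bookkeeping that identifies $d_m\Phi\big(J_m(\xi_M(m))\big)$ with the metric dual of $\xi$ relative to $\langle\,,\,\rangle_m$, but that computation is already contained in the proof of Lemma~\ref{lem:varsigma-isometry} and can simply be quoted, so the argument above is essentially complete.
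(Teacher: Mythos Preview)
Your proof is correct and follows essentially the same route as the paper's: both characterize $N'_m$ as the orthogonal complement of $J_m\big(\xi_M(m)\big)$ inside $J_m\big(\mathfrak{t}_M(m)\big)$ and then verify that $g_m\big(\eta_M(m),\xi_M(m)\big)=\langle\varpi,\eta\rangle$ via the moment map relation. The only cosmetic differences are that the paper runs the argument as a chain of biconditionals (getting both inclusions at once) and redoes the moment map computation inline through $\omega_m$, whereas you prove one inclusion plus a dimension count and quote the identification from the proof of Lemma~\ref{lem:varsigma-isometry}; the underlying computation is the same.
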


\begin{proof}
Let $\xi\in \mathfrak{t}$ be such that
$d_m\Phi\Big(J_m\big(\xi_M(m)\big)\Big)=\varpi$. If $\eta\in \mathfrak{t}$, then
\begin{eqnarray*}
\lefteqn{J_m\big(\eta_M(m)\big)\in N_m'\,\Leftrightarrow\,
g_m\Big(J_m\big(\eta_M(m)\big),J_m\big(\xi_M(m)\big)\Big)=0}\\
&\Leftrightarrow&g_m\big(\eta_M(m),\xi_M(m)\big)=0\,\Leftrightarrow\,
\omega_m\Big(\eta_M(m),J_m\big(\xi_M(m)\big)\Big)=0\\
&\Leftrightarrow&
d_m\Phi_\eta\Big(J_m\big(\xi_M(m)\big)\Big)=0\,\Leftrightarrow\,\left< d_m\Phi\Big(J_m\big(\xi_M(m)\Big),\eta\right>=0\\
&\Leftrightarrow&\eta\in \ker\left(d_m\Phi\Big(J_m\big(\xi_M(m)\Big)\right)\,\Leftrightarrow\,
\eta\in \ker(\varpi)=\ker\big(\Phi(m)\big),
\end{eqnarray*}
where in the last equality we have used that $\Phi(m)=\lambda\,\varpi$ for some $\lambda>0$.
\end{proof}

Lemma \ref{lem:N-ker-Phi} establishes a natural isomorphism between $N'$ and the restriction to $M'_\varpi$
of the globally defined vector bundle $V$ on $M$ given by $V(m)=:\ker\Phi(m)$.

Let us now dwell on the hypothesis of Theorem \ref{thm:higher-dim-case}. To this end, let us introduce
the closed symplectic cone in $T^*X\setminus\{0\}$ sprayed by the connection 1-form:
$$
\Sigma=:\big\{(x,r\alpha_x)\,:\,x\in X,r>0\big\}.
$$
This cone is crucial in the microlocal description of the Szeg\"{o} kernel as an FIO \cite{bs}
and in the theory of Toeplitz operators \cite{bg};
in particular, the wave front of $\Pi$ is the anti-diagonal
$$
\Sigma^\sharp=:\big\{(x,r\alpha_x,x,-r\alpha_x)\,:\,x\in X,r>0\big\}\subseteq T^*(X)\times T^*(X).
$$
Let $\omega_\Sigma$ be the restriction to $\Sigma$ of the symplectic structure of
$T^*X$.
Then $\Sigma\cong X\times \mathbb{R}_+\cong A^\vee\setminus\{0\}$ as manifolds,
and $\omega_\Sigma$ is as follows. Let $r$ be the cone coordinate
on $\Sigma$ and $\theta$ be the \lq circle\rq \,coordinate on $X$, locally defined,
and pulled-back to $\Sigma$. Then $\omega_\Sigma=2r\,\omega+dr\wedge d\theta$, where
$\omega$ is pulled-back from $M$ (symbols of pull-back are omitted).

In general,
the Hamiltonian vector field $\upsilon_f$ on $(M,2\omega)$ of any real $f\in \mathcal{C}^\infty(M)$
lifts to the contact vector field
$\widetilde{\upsilon}_f=\upsilon^\sharp_f-f\,(\partial/\partial\theta)$
on $(X,\alpha)$; here $\upsilon^\sharp_f$ is the horizontal lift and
$\partial/\partial \theta$ is the generator of the standard circle action.
The cotangent lift of the contact flow of $\phi^X_\tau:X\rightarrow X$ ($\tau\in \mathbb{R}$)
of $\widetilde{\upsilon}_f$ is a Hamiltonian flow on $T^*X$, which leaves $\Sigma$ invariant.
In fact, its restriction to $\Sigma$ is
$\Phi^\Sigma_\tau(x,r\,\alpha_x)=:\left(\phi^X_\tau(x),r\,\alpha_{\phi^X_\tau(x)}\right)$, and this
is the Hamiltonian flow of $rf$ on $(\Sigma,\omega_\Sigma)$.
In particular, since the action of $\mathbb{T}^\mathrm{g}$ on $X$ preserves $\alpha$,
$\widetilde{\mu}$ lifts to an Hamiltonian action
on $\Sigma$; the moment map of the latter is
$\widetilde{\Phi}(x,r\alpha_x)=r\,\Phi\big(\pi(x)\big)$.

\begin{lem}
\label{lem:moment-map-transverse}
$\Phi$ is transverse to $\mathbb{R}_+\cdot \varpi$ if and only if $\varpi$ is a regular value of
$\widetilde{\Phi}$.
\end{lem}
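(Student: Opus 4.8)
The plan is to compare the differential of $\Phi$ at a point of $M_\varpi=\Phi^{-1}(\mathbb{R}_+\cdot\varpi)$ with the differential of $\widetilde{\Phi}$ at a corresponding point of $\Sigma$, using the explicit formula $\widetilde{\Phi}(x,r\alpha_x)=r\,\Phi(\pi(x))$. Recall that $\Sigma\cong X\times\mathbb{R}_+$, and that $d_{(x,r\alpha_x)}\widetilde{\Phi}$ decomposes according to the variations in the $X$-direction, which contribute $r\cdot d_m\Phi\circ d_x\pi$ (with $m=\pi(x)$), and the variation in the cone coordinate $r$, which contributes the vector $\Phi(m)\in\mathfrak{t}^*$. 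Since $d_x\pi:T_xX\to T_mM$ is surjective (with kernel the vertical direction, on which $\Phi$ does not depend), the image of $d_{(x,r\alpha_x)}\widetilde{\Phi}$ is exactly $\mathrm{im}(d_m\Phi)+\mathbb{R}\cdot\Phi(m)$.

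Next I would translate the two conditions into this common language. The point $\varpi$ is a regular value of $\widetilde{\Phi}$ iff for every $(x,r\alpha_x)\in\widetilde{\Phi}^{-1}(\varpi)$ the map $d_{(x,r\alpha_x)}\widetilde{\Phi}$ is surjective, i.e. $\mathrm{im}(d_m\Phi)+\mathbb{R}\cdot\Phi(m)=\mathfrak{t}^*$. Note that $\widetilde{\Phi}^{-1}(\varpi)$ is precisely the set of $(x,r\alpha_x)$ with $r\,\Phi(\pi(x))=\varpi$, i.e. $\pi(x)\in M_\varpi$ and $r=\|\varpi\|/\|\Phi(\pi(x))\|$; so the fibre over $\varpi$ sits above $M_\varpi$. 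On the other hand, transversality of $\Phi$ to the ray $\mathbb{R}_+\cdot\varpi$ at a point $m\in M_\varpi$ means $\mathrm{im}(d_m\Phi)+\mathbb{R}\cdot\varpi=\mathfrak{t}^*$, and since $\Phi(m)\in\mathbb{R}_+\cdot\varpi$ we have $\mathbb{R}\cdot\varpi=\mathbb{R}\cdot\Phi(m)$, so this reads $\mathrm{im}(d_m\Phi)+\mathbb{R}\cdot\Phi(m)=\mathfrak{t}^*$ — literally the same condition. Thus the equivalence follows once we know the fibre $\widetilde{\Phi}^{-1}(\varpi)$ projects onto $M_\varpi$ and conversely every point of $M_\varpi$ is hit; both directions are immediate from the displayed description of the fibre (and the case $M_\varpi=\emptyset$ is vacuous on both sides).

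The main point to be careful about is the bookkeeping of the cone direction: one must check that the extra summand $\mathbb{R}\cdot\Phi(m)$ appearing in $\mathrm{im}(d\widetilde{\Phi})$ is exactly the span that distinguishes ``transverse to the ray'' from ``$\Phi$ submersive'', so that no information is lost or spuriously gained. Concretely, at a point $m$ where $\Phi(m)\in\mathrm{im}(d_m\Phi)$ already, both conditions hold automatically; the content is at points where $\Phi(m)\notin\mathrm{im}(d_m\Phi)$, and there both conditions fail or hold together, because adding the one-dimensional span $\mathbb{R}\cdot\Phi(m)=\mathbb{R}\cdot\varpi$ is the only correction present. I expect this verification, together with the (routine) identification of $\ker d_x\pi$ with the vertical direction on which $\widetilde{\Phi}$ is constant in $x$, to be the only substantive step; everything else is a direct unwinding of definitions using the formula $\widetilde{\Phi}(x,r\alpha_x)=r\,\Phi(\pi(x))$ established just above the statement.
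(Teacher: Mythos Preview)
Your proposal is correct and follows essentially the same approach as the paper: both identify the fibre $\widetilde{\Phi}^{-1}(\varpi)$ as sitting over $M_\varpi$ via $r=\|\varpi\|/\|\Phi(m)\|$, and both reduce the equivalence to the key identity $\mathrm{im}\,d_{(x,r\alpha_x)}\widetilde{\Phi}=d_m\Phi(T_mM)+\mathrm{span}\{\Phi(m)\}$, which the paper states as equation~(\ref{eqn:transverse-conic-mm}) and you derive by decomposing the differential along the $X$- and $r$-directions. Your write-up is somewhat more explicit in spelling out why the two surjectivity conditions coincide, but the argument is the same.
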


\begin{proof}
We have $(x,r\alpha_x)\in \widetilde{\Phi}^{-1}(\varpi)$ if and only if $m\in M_\varpi\Phi^{-1}(\mathbb{R}_+\cdot \varpi)$
and $r=\|\varpi\|/\|\Phi(m)\|$, where $m=:\pi(x)$. Hence $\widetilde{\Phi}^{-1}(\varpi)$ is an $S^1$-bundle over $\Phi^{-1}(\mathbb{R}_+\cdot \varpi)$.
The statement follows since for any $(x,r\alpha_x)\in \Sigma$ with $m=\pi(x)$ we have
\begin{equation}
\label{eqn:transverse-conic-mm}
d_{(x,r\alpha_x)}\widetilde{\Phi}\big(T_{(x,r\alpha_x)}\Sigma\big)=d_{m}\Phi(T_mM)+\mathrm{span}\{\Phi(m)\}.
\end{equation}
\end{proof}

In particular, if $\Phi$ is transverse to $\mathbb{R}_+\cdot \varpi$ then $\widetilde{\mu}$ is locally
free on $X_\varpi=:\pi^{-1}(M_\varpi)$; therefore, the stabilizer subgroup $T_m\subseteq \mathbb{T}^\mathrm{g}$
of any $x\in X_\varpi$ is finite, and
depends only on $m=\pi(x)$.

Inspection of (\ref{eqn:transverse-conic-mm}) immediately yields:

\begin{lem}
\label{lem:transverse-conic-mm}
$\Phi$ is transverse to $\mathbb{R}_+\cdot \varpi$ if and only if for every $m\in M_\varpi$ either
$\mathrm{rank}(d_m\Phi)=\mathrm{g}$, or else $\mathrm{rank}(d_m\Phi)=\mathrm{g}-1$ and $\Phi(m)\not\in d_m\Phi(T_mM)$.
\end{lem}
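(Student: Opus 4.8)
The plan is to combine Lemma~\ref{lem:moment-map-transverse} with the explicit description~(\ref{eqn:transverse-conic-mm}) of the image of $d\widetilde{\Phi}$, and then to read off the stated dichotomy by an elementary dimension count in the $\mathrm{g}$-dimensional space $\mathfrak{t}^*$.

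First I would recall that, by Lemma~\ref{lem:moment-map-transverse}, $\Phi$ is transverse to $\mathbb{R}_+\cdot\varpi$ exactly when $\varpi$ is a regular value of $\widetilde{\Phi}$, i.e.\ when $d_{(x,r\alpha_x)}\widetilde{\Phi}$ is surjective onto $\mathfrak{t}^*$ for every $(x,r\alpha_x)\in\widetilde{\Phi}^{-1}(\varpi)$. As observed in the proof of that lemma, the points of $\widetilde{\Phi}^{-1}(\varpi)$ are precisely those $(x,r\alpha_x)$ with $m=\pi(x)\in M_\varpi$ and $r=\|\varpi\|/\|\Phi(m)\|$; since the cone coordinate $r>0$ does not enter the right-hand side of~(\ref{eqn:transverse-conic-mm}), surjectivity of $d_{(x,r\alpha_x)}\widetilde{\Phi}$ is equivalent to
$$
d_m\Phi(T_mM)+\mathrm{span}\{\Phi(m)\}=\mathfrak{t}^*.
$$
Quantifying over the fiber, transversality of $\Phi$ to $\mathbb{R}_+\cdot\varpi$ becomes the requirement that this equality hold for every $m\in M_\varpi$.

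Next I would analyze that equality pointwise. Since $\mathbf{0}\not\in\Phi(M)$, we have $\Phi(m)\neq\mathbf{0}$, so $\mathrm{span}\{\Phi(m)\}$ is one-dimensional and adjoining it to the subspace $d_m\Phi(T_mM)$ increases the dimension by at most one. Consequently the sum equals all of $\mathfrak{t}^*$ (which has dimension $\mathrm{g}$) if and only if either $\mathrm{rank}(d_m\Phi)=\dim d_m\Phi(T_mM)=\mathrm{g}$ already, or $\mathrm{rank}(d_m\Phi)=\mathrm{g}-1$ and $\Phi(m)\not\in d_m\Phi(T_mM)$, so that the added line genuinely enlarges the subspace. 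Combining this with the previous paragraph yields the asserted criterion.

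I do not expect any real obstacle here: once~(\ref{eqn:transverse-conic-mm}) and Lemma~\ref{lem:moment-map-transverse} are available, the argument is a one-line linear-algebra count. The only points requiring a modicum of care are the bookkeeping that $\widetilde{\Phi}^{-1}(\varpi)$ fibers over $M_\varpi$ (so the quantifier over fiber points turns into a quantifier over $m\in M_\varpi$) and the remark that $r$ drops out of the transversality condition, so that no information is lost in passing from $\widetilde{\Phi}$ on $\Sigma$ down to $\Phi$ on $M$.
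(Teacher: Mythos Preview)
Your argument is correct and is essentially the paper's own: the lemma is stated there as an immediate consequence of~(\ref{eqn:transverse-conic-mm}), and your dimension count simply spells out that inspection. The only additions you make---that $\widetilde{\Phi}^{-1}(\varpi)$ fibers over $M_\varpi$ and that $r$ drops out---are already contained in the proof of Lemma~\ref{lem:moment-map-transverse}.
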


\begin{cor}
\label{cor:transverse-conic-mm}
$\Phi$ is transverse to $\mathbb{R}_+\cdot \varpi$ if and only if the following two conditions hold:
\begin{enumerate}
  \item $\mathrm{rank}(\mathrm{val}_m)\ge \mathrm{g}-1$ for every $m\in M_\varpi$;
  \item if $m\in M_\varpi$ and $\mathrm{rank}(\mathrm{val}_m)= \mathrm{g}-1$ (that is,
  $m\in M_\varpi\setminus M_\varpi'$), and if $\ker(\mathrm{val}_m)=\mathrm{span}\{\xi\}$,
  then $\langle \Phi(m),\xi\rangle \neq 0$.
\end{enumerate}
\end{cor}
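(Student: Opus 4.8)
Corollary \ref{cor:transverse-conic-mm}: $\Phi$ is transverse to $\mathbb{R}_+\cdot\varpi$ iff (1) $\mathrm{rank}(\mathrm{val}_m)\ge \mathrm{g}-1$ for all $m\in M_\varpi$, and (2) if $m\in M_\varpi$ with $\mathrm{rank}(\mathrm{val}_m)=\mathrm{g}-1$ and $\ker(\mathrm{val}_m)=\mathrm{span}\{\xi\}$, then $\langle\Phi(m),\xi\rangle\neq 0$.

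The plan: this should follow more or less immediately from Lemma \ref{lem:transverse-conic-mm}, once I translate the conditions there (on $\mathrm{rank}(d_m\Phi)$ and on $\Phi(m)\notin d_m\Phi(T_mM)$) into conditions on $\mathrm{val}_m$. The key linear-algebra fact to establish first is that $\mathrm{rank}(d_m\Phi)=\mathrm{rank}(\mathrm{val}_m)$. This is standard moment-map theory: for $\xi\in\mathfrak{t}$ one has $d_m\Phi_\xi=\iota_{\xi_M}\omega_m$ (up to the factor $2$ coming from the symplectic form $2\omega$), i.e. $\langle d_m\Phi(w),\xi\rangle=2\omega_m(\xi_M(m),w)$ for all $w\in T_mM$. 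Since $\omega_m$ is nondegenerate, the map $w\mapsto d_m\Phi(w)$ has the same rank as $w\mapsto\omega_m(\cdot,w)$ restricted to pairing against $\mathfrak{t}_M(m)$, and dually $\ker(d_m\Phi)$ is the $\omega$-orthogonal complement of $\mathfrak{t}_M(m)$; hence $\mathrm{rank}(d_m\Phi)=\dim\mathfrak{t}_M(m)=\mathrm{rank}(\mathrm{val}_m)$. The same computation identifies the image: $d_m\Phi(T_mM)=\{f\in\mathfrak{t}^*: f(\xi)=0\ \forall\,\xi\in\ker(\mathrm{val}_m)\}=\mathrm{Ann}(\ker(\mathrm{val}_m))$.

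With these identifications in hand, I would argue as follows. Lemma \ref{lem:transverse-conic-mm} says transversality holds iff for every $m\in M_\varpi$ either $\mathrm{rank}(d_m\Phi)=\mathrm{g}$, or $\mathrm{rank}(d_m\Phi)=\mathrm{g}-1$ and $\Phi(m)\notin d_m\Phi(T_mM)$. Substituting $\mathrm{rank}(d_m\Phi)=\mathrm{rank}(\mathrm{val}_m)$, the dichotomy becomes: either $\mathrm{rank}(\mathrm{val}_m)=\mathrm{g}$, or $\mathrm{rank}(\mathrm{val}_m)=\mathrm{g}-1$ and $\Phi(m)\notin\mathrm{Ann}(\ker(\mathrm{val}_m))$. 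In the second case $\ker(\mathrm{val}_m)$ is a line $\mathrm{span}\{\xi\}$, and $\Phi(m)\notin\mathrm{Ann}(\mathrm{span}\{\xi\})$ is precisely $\langle\Phi(m),\xi\rangle\neq 0$. The only remaining point is to see that the disjunction ``$\mathrm{rank}=\mathrm{g}$ or ($\mathrm{rank}=\mathrm{g}-1$ and [\ldots])'', quantified over all $m\in M_\varpi$, is equivalent to the conjunction of (1) and (2): condition (1) rules out $\mathrm{rank}(\mathrm{val}_m)\le\mathrm{g}-2$ (for which neither alternative of the dichotomy can hold, since then $\Phi(m)\in\mathrm{Ann}(\ker)$ as $\dim\mathrm{Ann}(\ker)=\mathrm{rank}<\mathrm{g}-1$... actually one checks directly that rank $\le\mathrm{g}-2$ forces failure), and given (1), for each $m$ the only nontrivial case is $\mathrm{rank}=\mathrm{g}-1$, which is exactly what (2) addresses.

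**Main obstacle.** There is essentially no obstacle; the corollary is a routine repackaging. The one spot requiring slight care is the rank-counting in the ``$\le\mathrm{g}-2$'' case: I should verify that if $\mathrm{rank}(\mathrm{val}_m)\le\mathrm{g}-2$ then $\Phi(m)\in d_m\Phi(T_mM)=\mathrm{Ann}(\ker(\mathrm{val}_m))$ automatically fails the transversality criterion — but in fact when $\mathrm{rank}(d_m\Phi)\le\mathrm{g}-2$, Lemma \ref{lem:transverse-conic-mm} already fails outright (neither branch of its dichotomy applies), so (1) is genuinely needed and genuinely sufficient to exclude this. I would present the proof as: first the lemma $\mathrm{rank}(d_m\Phi)=\mathrm{rank}(\mathrm{val}_m)$ and the identification $d_m\Phi(T_mM)=\mathrm{Ann}(\ker\mathrm{val}_m)$ via $d_m\Phi_\xi=2\,\iota_{\xi_M}\omega$, then substitute into Lemma \ref{lem:transverse-conic-mm} and read off (1)–(2). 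I would keep it to a few lines, since the excerpt's own phrasing (``Inspection of \eqref{eqn:transverse-conic-mm} immediately yields'') signals the authors regard it as immediate.
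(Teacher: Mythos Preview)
Your proposal is correct and follows exactly the intended route: the paper itself gives no explicit proof of the corollary, presenting it as an immediate consequence of Lemma~\ref{lem:transverse-conic-mm}, and you have supplied precisely the standard moment-map identifications $\mathrm{rank}(d_m\Phi)=\mathrm{rank}(\mathrm{val}_m)$ and $d_m\Phi(T_mM)=\mathrm{Ann}(\ker\mathrm{val}_m)$ (via $d_m\Phi_\xi=2\,\iota_{\xi_M}\omega$) that make this immediate. Your handling of the case $\mathrm{rank}(\mathrm{val}_m)\le \mathrm{g}-2$ is also fine: neither alternative in Lemma~\ref{lem:transverse-conic-mm} can hold there, so condition~(1) is exactly what excludes it.
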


In other words, if $\Phi$ is transverse to $\mathbb{R}_+\cdot \varpi$ if and only if
$\mathrm{val}_m$ induces by restriction an injective linear map $\ker\big(\Phi(m)\big)\rightarrow T_mM$
for every $m\in T_mM$. The image of this map as $m\in M_\varpi$ varies forms a vector bundle, which is naturally
isomorphic to the normal bundle, as we now show.

Assuming $\Phi$ is transverse to $\mathbb{R}_+\cdot \varpi$, let $N$ be the
normal bundle of $M_\varpi$ in $M$; clearly, $N$ restricts $N'$ on $M_\varpi'$.
We can extend Lemma \ref{lem:N-ker-Phi} as follows:

\begin{lem}
\label{lem:N-ker-Phi-transverse}
If $\Phi$ is transverse to $\mathbb{R}_+\cdot \varpi$, then
$N_m=J_m\circ \mathrm{val}_m\big(\ker \Phi(m)\big)$ for every $m\in M_\varpi$.
\end{lem}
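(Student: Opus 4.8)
The plan is to reduce the identity to a dimension count together with one inclusion, and then to obtain the inclusion by the same moment-map computation that drives the proof of Lemma \ref{lem:N-ker-Phi}, this time avoiding the hypothesis that $\mathrm{val}_m$ itself is injective.

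First I would check that $\mathrm{val}_m$ restricts to an injective linear map $\ker\Phi(m)\rightarrow T_mM$ for every $m\in M_\varpi$. Indeed, by Corollary \ref{cor:transverse-conic-mm}, either $\mathrm{rank}(\mathrm{val}_m)=\mathrm{g}$, in which case this is clear, or $\mathrm{rank}(\mathrm{val}_m)=\mathrm{g}-1$ with $\ker(\mathrm{val}_m)=\mathrm{span}\{\xi\}$ and $\langle\Phi(m),\xi\rangle\neq 0$; then $\xi\notin\ker\Phi(m)$, and since $\ker\Phi(m)$ is a hyperplane in $\mathfrak{t}$ (recall $\Phi(m)\neq\mathbf{0}$) not containing the line $\ker(\mathrm{val}_m)$, the two meet only in $\mathbf{0}$. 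Consequently $J_m\circ\mathrm{val}_m\big(\ker\Phi(m)\big)$ has dimension $\dim\ker\Phi(m)=\mathrm{g}-1$, which is exactly $\dim N_m=\mathrm{codim}_M M_\varpi$ (transversality of $\Phi$ to the ray $\mathbb{R}_+\cdot\varpi\subseteq\mathfrak{t}^\vee$, which has codimension $\mathrm{g}-1$, makes $M_\varpi$ a submanifold of codimension $\mathrm{g}-1$). So it suffices to prove the inclusion $J_m\circ\mathrm{val}_m\big(\ker\Phi(m)\big)\subseteq N_m$, i.e. that $J_m\big(\eta_M(m)\big)\perp T_mM_\varpi$ for every $\eta\in\ker\Phi(m)$.

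For the inclusion, write $\Phi(m)=\lambda\,\varpi$ with $\lambda>0$, so that $\ker\Phi(m)=\ker\varpi$, and note that transversality gives $T_mM_\varpi=(d_m\Phi)^{-1}(\mathbb{R}\cdot\varpi)$. Then for $\eta\in\ker\Phi(m)$ and $w\in T_mM_\varpi$ I would compute, using that $J_m$ is $g_m$-orthogonal and $\omega_m$-compatible (so $g_m(J_m u,w)=\omega_m(u,w)$) and the defining relation of the moment map (under which $d_m\langle\Phi,\eta\rangle$ and $\iota_{\eta_M(m)}\omega_m$ are proportional up to a nonzero constant independent of $\eta$),
\[
g_m\big(J_m(\eta_M(m)),w\big)=\omega_m\big(\eta_M(m),w\big)=c\,\big\langle d_m\Phi(w),\eta\big\rangle ,
\]
with $c\neq 0$. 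Since $d_m\Phi(w)\in\mathbb{R}\cdot\varpi$ and $\langle\varpi,\eta\rangle=0$, the right-hand side vanishes; hence $J_m(\eta_M(m))\in T_mM_\varpi^\perp=N_m$, and together with the dimension count this yields the asserted equality.

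The only point requiring a little care — and the natural candidate for "the hard part", though it is genuinely routine rather than difficult — is justifying the tangent-space formula $T_mM_\varpi=(d_m\Phi)^{-1}(\mathbb{R}\cdot\varpi)$ at points of $M_\varpi\setminus M_\varpi'$, where $\mathrm{rank}(d_m\Phi)=\mathrm{g}-1$: here $M_\varpi$ is still a submanifold because $\Phi$ is transverse to the one-dimensional submanifold $\mathbb{R}_+\cdot\varpi$ (not to a point), and the preimage theorem delivers both its codimension and its tangent space. If one prefers to sidestep this, an alternative is to invoke Lemma \ref{lem:N-ker-Phi} on the dense open subset $M_\varpi'$, observe that $m\mapsto J_m\circ\mathrm{val}_m\big(\ker\Phi(m)\big)$ is a smooth rank-$(\mathrm{g}-1)$ subbundle of $TM|_{M_\varpi}$ in a neighbourhood of any point of $M_\varpi$ (injectivity of $\mathrm{val}_m|_{\ker\Phi(m)}$ being an open condition, as shown above), and note that two continuous subbundles of equal rank coinciding on a dense subset must agree everywhere.
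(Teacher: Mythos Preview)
Your proof is correct and follows essentially the same route as the paper: both establish the inclusion $J_m\circ\mathrm{val}_m\big(\ker\Phi(m)\big)\subseteq N_m$ via the identity $g_m\big(J_m(\eta_M(m)),w\big)=\mathrm{const}\cdot\langle d_m\Phi(w),\eta\rangle$ together with $d_m\Phi(w)\in\mathbb{R}\,\varpi$ for $w\in T_mM_\varpi$, and then conclude by the dimension count supplied by Corollary~\ref{cor:transverse-conic-mm}. Your write-up is somewhat more detailed (you spell out the injectivity of $\mathrm{val}_m|_{\ker\Phi(m)}$ and flag the preimage-theorem description of $T_mM_\varpi$, and you offer a density-based alternative), but the argument is the same.
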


\begin{proof}
Suppose $m\in M_\varpi$ and $\mathbf{v}\in T_mM_\varpi$. Then $\Phi(m)=\lambda \,\varpi$
for some $\lambda>0$, and $d_m\Phi(\mathbf{v})=b\,\varpi$ for some $b\in \mathbb{R}$; therefore,
$d_m\Phi(\mathbf{v})=a\,\varpi$, $a=b/\lambda$. If $\eta\in \mathfrak{t}$,
$$
g_m\Big(\mathbf{v},J_m\big(\eta_M(m)\big)\Big)=\omega_m\big(\eta_M(m),\mathbf{v}\big)=\langle
d_m\Phi(\mathbf{v}),\eta\rangle =a\,\langle
\Phi(m),\eta\rangle .
$$
Thus $J_m\circ\mathrm{val}_m\big(\ker \Phi(m)\big)\subseteq N_m$; the statement follows by dimension reasons in
view of Corollary \ref{cor:transverse-conic-mm}.
\end{proof}

\subsection{Heisenberg local coordinates}
\label{subsect:heis}
We shall rely on the notion of Heisenberg local coordinates on $X$, for which we refer to \cite{sz}.
If $\gamma$ is a set of Heisenberg local coordinates on $X$ centered at $x$, we shall set
$x+(\theta,\mathbf{v})=:\gamma (\theta,\mathbf{v})$; here $\theta\in (-\pi,\pi)$ and
$\mathbf{v}\in B_{2\mathrm{d}}(\mathbf{0},\delta)$,
the open ball of center the origin and radius $\delta>0$ in $\mathbb{C}^\mathrm{d}\cong \mathbb{R}^{2\mathrm{d}}$.
We shall also write $x+\mathbf{v}$ for $x+(0,\mathbf{v})$.

We may regard $\mathfrak{p}_m(\mathbf{v})=:\pi\big(\gamma(0,\mathbf{v})\big)$
as a set of preferred local coordinates on $M$ centered at
$m=\pi(x)$ \cite{sz}, and the standard circle action $r:S^1\times X\rightarrow X$
is expressed by translation in $\theta$: where defined, we have
$$
r_\beta\big(x+(\theta,\mathbf{v})\big)=x+(\theta+\beta,\mathbf{v}),
$$
where we identify $\beta\in (-\pi,\pi)$ with $e^{i\beta}$.
In particular, these
coordinates come with a unitary isomorphisms $T_mM\cong \mathbb{C}^\mathrm{d}$ (the unitary structure on $\mathbb{C}^\mathrm{d}$
being, of course, the standard one);
furthermore, they are horizontal at $x$
with respect  to the connection 1-form, meaning that the image of the local section $\mathbf{v}\mapsto x+(0,\mathbf{v})$
has horizontal tangent space at $x$. Therefore, a system of Heisenberg local coordinates centered at $X$ determines
an linear isometry $T_xX\cong \mathbb{R}\oplus \mathbb{C}^\mathrm{d}$.

Any $\xi\in \mathfrak{t}$ induces smooth vector fields $\xi_M$ and $\xi_X$ on $M$ and $X$, respectively. If $\xi_M^\sharp$
is the horizontal lift of $\xi_M$, and $\partial/\partial \theta$ is the generator of the standard $S^1$-action, then
\begin{equation}
\label{eqn:inf-generator-lift}
\xi_X=\xi_M^\sharp-\langle \Phi,\xi\rangle \,\frac{\partial}{\partial \theta}.
\end{equation}
In particular, under the linear isometry just mentioned,
$\xi_M(m)\in \mathbb{C}^\mathrm{d}$, and
$\xi_X(x)=\big(-\langle \Phi,\xi\rangle ,\xi_M^\sharp(m)\big)\in \mathbb{R}\oplus\mathbb{C}^\mathrm{d}$.

Suppose $\mathrm{g}=1$.
Let us denote a point in $\mathbb{T}^1$ by its standard angular coordinate $\vartheta$ ($-\pi<\vartheta<\pi$),
and set $\xi=\left.\partial/\partial \vartheta\right|_0$;
then for $\vartheta\sim 0$ we have
\begin{equation}
\label{eqn:T-1-local-action}
\widetilde{\mu}_{-\vartheta}(x)=x+\big(\vartheta\,\Phi(m),-\vartheta\,\xi_M(m)\big)+O\left(\vartheta^2\right).
\end{equation}

More generally, for any $\mathrm{g}\ge 1$
let $\vartheta=(\vartheta_1,\ldots,\vartheta_\mathrm{g})$ be the collective angular coordinate on
$\mathbb{T}^\mathrm{g}$ ($-\pi<\vartheta_j<\pi$), and set
$\xi_j=\left.\partial/\partial \vartheta_j\right|_{\mathbf{0}}$. Also, let
\begin{center}
$\Phi_j=:\langle \Phi,\xi_j\rangle :M\rightarrow \mathbb{R}$,
$\vartheta \cdot \Phi=:\sum_{j=1}^\mathrm{g}\vartheta_j\,\Phi_j$,
$\vartheta\cdot \xi_M=:\sum_{j=1}^\mathrm{g}\vartheta_j\,\xi_{jM}$.
\end{center}
Then for $\vartheta\sim \mathbf{0}$
\begin{equation}
\label{eqn:T-g-local-action}
\widetilde{\mu}_{-\vartheta}(x)=x+\big(\vartheta\cdot\Phi(m),-\vartheta\cdot\xi_M(m)\big)+O\left(\|\vartheta\|^2\right).
\end{equation}
This may be refined as follows.

\begin{lem}
\label{lem:more-refined-local-estimate}
For $\vartheta\sim \mathbf{0}$, we have
$$
\widetilde{\mu}_{-\vartheta}(x)=x+\Big(\vartheta\cdot\Phi(m)+O\left(\|\vartheta\|^3\right),
-\vartheta\cdot\xi_M(m)+O\left(\|\vartheta\|^2\right)\Big).
$$
\end{lem}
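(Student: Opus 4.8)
The plan is to fix $\vartheta$ close to $\mathbf{0}$, put $\eta=\sum_j\vartheta_j\,\xi_j\in\mathfrak t$, and analyze the one-parameter flow $\tau\mapsto\widetilde\mu_{\exp(-\tau\eta)}(x)=\widetilde\mu_{-\tau\vartheta}(x)$ in Heisenberg local coordinates centered at $x$. Writing $\widetilde\mu_{-\vartheta}(x)=x+\big(\Theta(\vartheta),\mathbf V(\vartheta)\big)$, the flow line is $x+\big(\Theta(\tau\vartheta),\mathbf V(\tau\vartheta)\big)$; by \eqref{eqn:T-g-local-action} we already know $\Theta(\vartheta)=\vartheta\cdot\Phi(m)+O(\|\vartheta\|^2)$ and $\mathbf V(\vartheta)=-\vartheta\cdot\xi_M(m)+O(\|\vartheta\|^2)$, so the $\mathbf v$-component statement is in hand and the whole point is to improve the error in $\Theta$ to $O(\|\vartheta\|^3)$. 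Since the quadratic part $Q$ of $\Theta$ at the origin is a quadratic form, it suffices by polarization to prove $Q(\vartheta)=0$ for each fixed $\vartheta$, i.e.\ that $\partial_\tau^2\big|_{\tau=0}\Theta(\tau\vartheta)=0$.

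I would then write down the governing ODE. The flow line is the integral curve through $x$ of $-\eta_X$, which by \eqref{eqn:inf-generator-lift} equals $\langle\Phi,\eta\rangle\,\partial/\partial\theta-\eta_M^\sharp$. In Heisenberg coordinates the structure generator $\partial/\partial\theta$ is the constant field $(1,\mathbf 0)$, while the connection $1$-form has the Heisenberg normal form $\alpha=d\theta+\alpha_0$ (see \cite{sz}), where $\alpha_0$ is a $1$-form in the $\mathbf v$-variables with $\alpha_0(\mathbf 0)=0$ and with \emph{antisymmetric} linearization at $\mathbf 0$ (the standard symplectic pairing). Consequently the $\theta$-component of the horizontal lift $\eta_M^\sharp$ at $x+(0,\mathbf b)$ is $-\alpha_0(\mathbf b)\big(\widetilde{\eta_M}(\mathbf b)\big)$, with $\widetilde{\eta_M}$ denoting $\eta_M$ read off in the preferred coordinates $\mathfrak p_m$ on $M$. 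Writing the flow as $x+\big(\mathfrak a(\tau),\mathbf b(\tau)\big)$, one gets
\begin{equation*}
\mathfrak a'(\tau)=\langle\Phi,\eta\rangle\big(\mathfrak p_m(\mathbf b(\tau))\big)+\alpha_0\big(\mathbf b(\tau)\big)\big(\widetilde{\eta_M}(\mathbf b(\tau))\big),\qquad\mathbf b'(\tau)=-\widetilde{\eta_M}\big(\mathbf b(\tau)\big),
\end{equation*}
with $\mathbf b(0)=\mathbf 0$ and $\mathbf b'(0)=-\eta_M(m)$. Differentiating the first equation at $\tau=0$ gives $\mathfrak a''(0)=-d_m\langle\Phi,\eta\rangle\big(\eta_M(m)\big)+\big(\text{lin.\ part of }\alpha_0\big)\big(-\eta_M(m),\eta_M(m)\big)$; the first term vanishes by the defining relation of the moment map of $(M,2\omega)$, since it equals $\pm2\,\omega_m\big(\eta_M(m),\eta_M(m)\big)=0$, and the second vanishes by antisymmetry. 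Hence $\partial_\tau^2\big|_0\Theta(\tau\vartheta)=\mathfrak a''(0)=0$.

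By the reduction above this forces $Q\equiv0$, so $\Theta(\vartheta)=\vartheta\cdot\Phi(m)+O(\|\vartheta\|^3)$, which together with the bound on $\mathbf V$ from \eqref{eqn:T-g-local-action} is the claim. The delicate point I anticipate is the use of Heisenberg coordinates: the horizontal lift of $\eta_M$ along the coordinate section picks up a vertical drift, and the cancellation of the quadratic term in $\mathfrak a$ rests precisely on the first-order part of that drift being governed by the antisymmetric standard symplectic pairing (equivalently, the coordinate section being horizontal at $x$ in the correct radial gauge); the moment-map identity $d_m\langle\Phi,\eta\rangle(\eta_M(m))=0$ is then immediate, and the remaining Taylor estimates are routine, uniform for $\vartheta$ in a fixed neighborhood of $\mathbf 0$ by smoothness of $\Theta$ and $\mathbf V$.
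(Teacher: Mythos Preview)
Your argument is correct. It reaches the same conclusion as the paper's proof but by a different, more hands-on route.

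The paper factors the lifted orbit as a horizontal lift followed by a fiber rotation: since $\Phi$ is constant along $\mathbb{T}$-orbits (the torus being Abelian), one has the exact identity $\widetilde{\gamma}(\tau)=r_{\tau\,\vartheta\cdot\Phi(m)}\big(\gamma^\sharp(\tau)\big)$, where $\gamma^\sharp$ is the horizontal lift through $x$ of the orbit $\gamma(\tau)=\mu_{-\tau\vartheta}(m)$. The $O(\tau^3)$ in the angular component then comes entirely from $\gamma^\sharp$, for which the paper invokes Lemma~2.4 of \cite{dp}.

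You instead write the flow ODE directly in Heisenberg coordinates and kill the quadratic term in $\Theta$ by computing $\mathfrak a''(0)$. Your two vanishing mechanisms---$d_m\langle\Phi,\eta\rangle\big(\eta_M(m)\big)=2\,\omega_m\big(\eta_M(m),\eta_M(m)\big)=0$ and the antisymmetry of the linear part of $\alpha_0$---are exactly the infinitesimal shadows of the two ingredients the paper uses (constancy of $\Phi$ along orbits, and the cited horizontal-lift lemma, whose content is precisely that in Heisenberg gauge the drift term $\alpha_0(\mathbf b)(\mathbf b')$ is $O(\tau^2)$). So the underlying geometry is the same; what you gain is self-containment (no appeal to \cite{dp}), at the price of unpacking the Heisenberg normal form of $\alpha$ explicitly. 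Your remark about polarization is harmless but unnecessary: you already show $Q(\vartheta)=0$ for every $\vartheta$, which is $Q\equiv 0$.
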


\begin{proof}
Given $\vartheta\in \mathbb{R}^\mathrm{g}$ of unit norm, let us define smooth maps
$\gamma :\mathbb{R}\rightarrow M$ and $\widetilde{\gamma}:\mathbb{R}\rightarrow X$ by setting
$\gamma(\tau)=:\mu_{-\tau \vartheta}(m)$ and $\widetilde{\gamma}(\tau)=:\widetilde{\mu}_{-\tau\vartheta}(x)$.
In the induced preferred local coordinates centered at $m$, obviously
$\gamma(\tau)=-\tau\,\vartheta\cdot \xi_M(m)+O\left(\tau^2\right)$ as $\tau\sim 0$.
Also, let $\gamma^\sharp:\mathbb{R}\rightarrow X$ be the unique horizontal lift of $\gamma$ such that
$\gamma^\sharp(0)=x$. In view of Lemma 2.4 of \cite{dp}, for $\tau\sim 0$ we have
\begin{equation}
\label{eqn:horizontal-local}
\gamma^\sharp(\tau)=x+\Big(O\left(\tau^3\right),-\tau\,\vartheta\cdot \xi_M(m)+O\left(\tau^2\right)\Big).
\end{equation}
Since $\Phi$ is constant along $\gamma$, (\ref{eqn:inf-generator-lift}) implies
$\widetilde{\gamma}(\tau)=r_{\tau\vartheta\cdot \Phi(m)}\left(\gamma^\sharp(\tau)\right)$. By
(\ref{eqn:horizontal-local}), we get
$$
\widetilde{\gamma}(\tau)=
x+\Big(\tau\vartheta\cdot \Phi(m)+O\left(\tau^3\right),-\tau\,\vartheta\cdot \xi_M(m)+O\left(\tau^2\right)\Big).
$$
\end{proof}

We shall need a further strengthening of this. Having fixed a system $\gamma=\gamma_x$ of Heisenberg local coordinates
centered at $x$, we can find an open neighborhood $X'\subseteq X$ of $x$ and smoothly varying
family $\gamma_{x'}:(-\pi,\pi)\times B_{2\mathrm{d}}(\mathbf{0},\delta)\rightarrow X$
of Heisenberg local coordinates centered at points $x'\in X'$. We shall write $x'+(\theta,\mathbf{v})=
\gamma_{x'}(\theta,\mathbf{v})$. We may as well suppose that $X'$ is $S^1$-invariant.

\begin{lem}
\label{lem:comparison-heisenberg-local}
For $\mathbf{u},\mathbf{v}\sim \mathbf{0}$ in $ \mathbb{C}^\mathrm{d}$ and $\theta\in (-\pi,\pi)$
we have
$$
x+(\theta,\mathbf{u})=(x+\mathbf{w})+\Big(\theta+\omega_m(\mathbf{w},\mathbf{u})+O\left(\|(\mathbf{u},\mathbf{v})\|^3\right),
\mathbf{u}-\mathbf{w}+O\left(\|(\mathbf{u},\mathbf{v})\|^2\right)\Big).
$$
\end{lem}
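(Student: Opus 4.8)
The plan is to compare the two systems of Heisenberg local coordinates $\gamma_x$ and $\gamma_{x+\mathbf{w}}$ by expressing both in terms of a fixed reference, exploiting the fact that Heisenberg coordinates are horizontal at their center and are normalized by the symplectic (K\"{a}hler) data in a prescribed way. Concretely, I would use the defining properties recalled in \S\ref{subsect:heis}: the section $\mathbf{v}\mapsto x'+(0,\mathbf{v})$ has horizontal tangent space at $x'$, the underlying preferred coordinates $\mathfrak{p}_{x'}$ are geodesic-type normal coordinates adapted to $J$ and $\omega$, and the circle action is translation in the $\theta$-variable. Since $\gamma_{x+\mathbf{w}}$ is obtained from the smoothly varying family, its $0$-th order behaviour at $\mathbf{v}=\mathbf{0}$ is $x+\mathbf{w}$; the first task is to identify, to the stated order, the change of coordinates $\Psi = \gamma_{x}^{-1}\circ \gamma_{x+\mathbf{w}}$ on the overlap.

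The key steps, in order: first, project everything to $M$ and compare the two preferred coordinate systems $\mathfrak{p}_m$ (centered at $m=\pi(x)$) and $\mathfrak{p}_{m'}$ (centered at $m'=\pi(x+\mathbf{w})$, which in $\mathfrak{p}_m$ is the point $\mathbf{w}+O(\|\mathbf{w}\|^2)$); because these are normal-type coordinates, the transition map is the identity plus a shift by $\mathbf{w}$ plus a quadratic correction, giving the $\mathbf{v}$-component $\mathbf{u}-\mathbf{w}+O(\|(\mathbf{u},\mathbf{v})\|^2)$ after absorbing the $O(\|\mathbf{w}\|^2)$ term into the error. Second, lift to $X$: the difference between the $\theta$-coordinate assigned by $\gamma_x$ and by $\gamma_{x+\mathbf{w}}$ is governed by the connection $1$-form $\alpha$, namely by the holonomy/parallel-transport discrepancy between the horizontal section through $x$ and the horizontal section through $x+\mathbf{w}$. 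Since $d\alpha = 2\pi\,\pi^*\omega$ (up to the normalization convention in the paper; here the relevant $2$-form is $\omega_m$), integrating over the small coordinate "triangle" spanned by $\mathbf{w}$ and $\mathbf{u}-\mathbf{w}$ yields the leading term $\omega_m(\mathbf{w},\mathbf{u})$, with the next correction being cubic because the coordinates are horizontal \emph{at the center} and the curvature contributes only through the enclosed area. This is exactly the mechanism already used in Lemma \ref{lem:more-refined-local-estimate} via Lemma 2.4 of \cite{dp}, and I would invoke that same cited estimate to get the $O\left(\|(\mathbf{u},\mathbf{v})\|^3\right)$ control on the $\theta$-component rather than merely $O\left(\|(\mathbf{u},\mathbf{v})\|^2\right)$.

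Finally I would assemble the pieces: write $x+(\theta,\mathbf{u}) = r_\theta\big(x+(0,\mathbf{u})\big)$, express $x+(0,\mathbf{u})$ through the $\gamma_{x+\mathbf{w}}$-chart using the base comparison and the horizontal-lift comparison above, and then push the $r_\theta$ through, using that $r_\theta$ is translation in the fibre variable in every chart of the $S^1$-invariant family. The bilinearity of $\omega_m$ and the fact that all discarded terms are genuinely of the claimed order (quadratic in the $\mathbf{v}$-slot, cubic in the $\theta$-slot, and with $\|\mathbf{w}\|\le\|\mathbf{u}\|+\|\mathbf{u}-\mathbf{w}\|\lesssim\|(\mathbf{u},\mathbf{v})\|$ so that $O(\|\mathbf{w}\|^2)$-type errors are absorbed) then gives the stated formula. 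The main obstacle I anticipate is bookkeeping the interaction between the base-coordinate change and the fibre/connection term to confirm the \emph{cubic} (not merely quadratic) error in the $\theta$-component: one must check that the quadratic corrections in the base transition map do not feed a quadratic term into $\theta$ through $\alpha$, which relies precisely on the horizontality of the Heisenberg section at the base point together with $d\alpha$ vanishing to first order is false — rather one uses that the quadratic base correction is $J$-linear/symmetric in a way that kills its pairing against the antisymmetric $\omega_m$ to this order, exactly as encoded in Lemma 2.4 of \cite{dp}. With that input the remaining computation is routine Taylor expansion.
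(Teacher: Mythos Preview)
Your approach is genuinely different from the paper's. The paper does share your first two steps: it reduces to $\theta=0$ using that the $S^1$-action is a translation in either chart, and it compares the base preferred charts to get the $\mathbf{v}$-component $\mathbf{u}-\mathbf{w}+O(\|(\mathbf{u},\mathbf{w})\|^2)$, so that
\[
x+\mathbf{u}=(x+\mathbf{w})+\big(\beta(\mathbf{u},\mathbf{w}),\,\mathbf{u}-\mathbf{w}+O(\|\cdot\|^2)\big)
\]
for some smooth $\beta$ with $\beta(\mathbf{u},\mathbf{0})=0$. But to identify $\beta$ it does \emph{not} compute the holonomy/curvature contribution directly as you propose. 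Instead it writes $\beta=\beta_1+\beta_2+\beta_3$ (linear, quadratic, higher) and evaluates the rescaled Szeg\"{o} kernel $\Pi_k\big(x+\mathbf{w}/\sqrt{k},\,x+\mathbf{u}/\sqrt{k}\big)$ in two ways: once directly via the near-diagonal expansion of \cite{sz} in the chart $\gamma_x$, and once via the same expansion in the chart $\gamma_{x+\mathbf{w}/\sqrt{k}}$, using the unknown $\beta$ in the transition. Matching the two asymptotics forces $\beta_1=0$ and $\beta_2(\mathbf{u},\mathbf{w})=\omega_m(\mathbf{w},\mathbf{u})$.

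Your direct geometric route (Stokes on the small triangle, invoking Lemma~2.4 of \cite{dp}) is in principle workable and has the merit of not appealing to the heavy Szeg\"{o} asymptotics of \cite{sz}. However, as you yourself flag, the delicate point is controlling the $\theta$-error to \emph{cubic} order: you must rule out that the quadratic remainder in the base transition feeds a quadratic term into $\theta$ through $\alpha$, and you must also keep track of the normalization (here $d\alpha=2\omega$, so the raw triangle area gives $\omega_m(\mathbf{w},\mathbf{u})$ only after the factor of $2$ meets the factor $\tfrac12$ from the triangle). Your appeal to Lemma~2.4 of \cite{dp} handles horizontal lifts of \emph{curves} through the center, not the comparison of two horizontal \emph{sections} centered at different points, so some additional argument is needed to bridge that gap. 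The paper's analytic trick sidesteps all of this bookkeeping: once the Szeg\"{o} expansion is available as a black box, both the coefficient and the cubic error fall out of a two-line comparison.
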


Here $\omega_m$ is the symplectic form on $T_mM$, identified with the standard symplectic structure on
$\mathbb{C}^\mathrm{d}$ under the given unitary isomorphism.

\begin{proof}
Since in any Heisenberg local chart the $S^1$ action $r_\theta$ is expressed by a translation by
$\theta$ in the angular coordinates, we may apply $r_{-\theta}$ to both sides and reduce to the
case $\theta=0$.

Each Heisenberg local chart $\gamma_{x'}$ determines a preferred local chart
$\mathfrak{p}_{m'}:B_{2\mathrm{d}}(\mathbf{0},\delta)\rightarrow M$ on $M$ centered at
$m'=:\pi(x')$; we shall write $\mathfrak{p}_{m'}(\mathbf{v})=m'+\mathbf{v}$. Then
$(m+\mathbf{u})+\mathbf{w}=m+\left(\mathbf{u}+\mathbf{w}+O\left(\|(\mathbf{u},\mathbf{w})\|^2\right)\right)$
as $\mathbf{u},\mathbf{w}\sim \mathbf{0}$ in $\mathbb{C}^\mathrm{d}$.

Since $m'+\mathbf{u}=\pi\big(x'+(\theta,\mathbf{u})\big)$ if $m'=\pi(x')$, this implies
\begin{equation}
\label{eqn:separate-coordinates}
x+\mathbf{u}=(x+\mathbf{w})+\big(\beta(\mathbf{u},\mathbf{w}),\mathbf{u}-\mathbf{w}+O\left(\|(\mathbf{u},\mathbf{w}\|^2\right)
\end{equation}
for some smooth function $\beta:B_{2\mathrm{d}}(\mathbf{0},\delta)\times B_{2\mathrm{d}}(\mathbf{0},\delta)\rightarrow
(-\pi,\pi)$ such that $\beta(\mathbf{u},\mathbf{0})=0$. Let us write $\beta=\beta_1+\beta_2+\beta_3$, where
$\beta_1$ is linear and $\beta_2$ is homogenous of degree $2$ in
$(\mathbf{u},\mathbf{v})\in \mathbb{R}^{2\mathrm{d}}\times \mathbb{R}^{2\mathrm{d}}$,
while $\beta_3$ vanishes at the origin to order $\ge 3$. Given $(\mathbf{u},\mathbf{v})$ of unit length,
by Theorem 3.1 of \cite{sz} we have
\begin{eqnarray}
\label{eqn:1st-asympt}
\lefteqn{\Pi_k\left(x+\frac{\mathbf{w}}{\sqrt{k}},x+\frac{\mathbf{u}}{\sqrt{k}}\right)}\\
&=&\left(\frac k\pi\right)^{\mathrm{d}}\,
e^{-i\omega_m(\mathbf{w},\mathbf{u})-\frac 12\,\|\mathbf{u}-\mathbf{w}\|_m^2}\cdot \left(1+O\left(k^{-1/2}\right)\right).
\nonumber
\end{eqnarray}
Adopting $x+\frac{\mathbf{w}}{\sqrt{k}}$ as reference point, we get instead
\begin{eqnarray}
\label{eqn:2nd-asympt}
\lefteqn{\Pi_k\left(x+\frac{\mathbf{w}}{\sqrt{k}},x+\frac{\mathbf{u}}{\sqrt{k}}\right)}\\
&=&\Pi_k\left(x+\frac{\mathbf{w}}{\sqrt{k}},\left(x+\frac{\mathbf{w}}{\sqrt{k}}\right)
+\left(\beta\left(\frac{\mathbf{u}}{\sqrt{k}},\frac{\mathbf{w}}{\sqrt{k}}\right),
\frac{1}{\sqrt{k}}\,(\mathbf{u}-\mathbf{w})+O\left(\frac 1k\right)\right)\right)\nonumber\\
&=&
\left(\frac k\pi\right)^{\mathrm{d}}\,e^{-ik\beta\left(\frac{\mathbf{u}}{\sqrt{k}},\frac{\mathbf{w}}{\sqrt{k}}\right)}\,
e^{-\frac 12\,\|\mathbf{u}-\mathbf{w}\|_m^2}\cdot \left(1+O\left(k^{-1/2}\right)\right)\nonumber\\
&=&\left(\frac k\pi\right)^{\mathrm{d}}\,e^{-i\sqrt{k}\beta_1(\mathbf{u},\mathbf{w})-i\beta_2(\mathbf{u},\mathbf{w})}\,
e^{-\frac 12\,\|\mathbf{u}-\mathbf{w}\|_m^2}\cdot \left(1+O\left(k^{-1/2}\right)\right)\nonumber\\
\nonumber
\end{eqnarray}
Comparing (\ref{eqn:1st-asympt}) and (\ref{eqn:2nd-asympt}) yields
$\beta_1=0$ and $\beta_2=\omega_m$.
\end{proof}

\begin{cor}
\label{cor:yet-more-refined-local-estimate}
As $(\vartheta,\mathbf{v})\sim \mathbf{0}$ in $\mathbb{R}^\mathrm{g}\times \mathbb{C}^\mathrm{d}$, we have
\begin{eqnarray*}
\widetilde{\mu}_{-\vartheta}(x+\mathbf{v})
&=&x+\Big(\vartheta\cdot \Phi(m)
+\omega_m\big(\vartheta\cdot \xi_M(m),\mathbf{v}\big)+O\left(\|(\vartheta,\mathbf{v})\|^3\right),\\
&&\mathbf{v}-\vartheta\cdot \xi_M(m)+O\left(\|(\vartheta,\mathbf{v})\|^2\right)\Big).
\end{eqnarray*}
\end{cor}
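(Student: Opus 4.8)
The statement refines Lemma \ref{lem:more-refined-local-estimate}, which is the case $\mathbf v=\mathbf 0$, so the idea is to reduce to that lemma by letting the base point vary, and then to re-express the outcome in the Heisenberg chart centred at $x$ by means of Lemma \ref{lem:comparison-heisenberg-local}. Below, $x+(\theta,\mathbf u)$ and $x'+(\theta,\mathbf u)$ refer to the charts $\gamma_x$ and $\gamma_{x'}$ respectively, and $O(\nu)$ denotes an error bounded by $C\,\|(\vartheta,\mathbf v)\|^{\nu}$ near the origin (smaller-order contributions being absorbed silently; when $O(\cdot)$ carries an explicit argument it has its usual meaning).

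First I would put $x'=x+\mathbf v=\gamma_x(0,\mathbf v)$, so that $m':=\pi(x')=\mathfrak{p}_m(\mathbf v)$ corresponds to $\mathbf v$ in the preferred chart $\mathfrak{p}_m$. The proof of Lemma \ref{lem:more-refined-local-estimate} uses only \eqref{eqn:inf-generator-lift}, the first-order expansion $\mu_{-\tau\vartheta}(m')=-\tau\,\vartheta\cdot\xi_M(m')+O(\tau^2)$ in preferred coordinates, and Lemma 2.4 of \cite{dp}; since we are given a smoothly varying family $\gamma_{x'}$ of Heisenberg charts, that argument goes through verbatim and uniformly for $x'$ near $x$, giving
\begin{equation*}
\widetilde{\mu}_{-\vartheta}(x+\mathbf v)=x'+\Big(\vartheta\cdot\Phi(m')+O\big(\|\vartheta\|^3\big),\ -\vartheta\cdot\xi_M(m')+O\big(\|\vartheta\|^2\big)\Big).
\end{equation*}
Next I would transport this back to the chart $\gamma_x$. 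Inverting the relation of Lemma \ref{lem:comparison-heisenberg-local} with $\mathbf w=\mathbf v$, a point with coordinates $(\theta',\mathbf u')$ in $\gamma_{x'}$ has, in $\gamma_x$, the coordinates $\big(\theta'-\omega_m(\mathbf v,\mathbf u')+O(3),\ \mathbf u'+\mathbf v+O(2)\big)$. Inserting $\theta'=\vartheta\cdot\Phi(m')+O(\|\vartheta\|^3)$ and $\mathbf u'=-\vartheta\cdot\xi_M(m')+O(\|\vartheta\|^2)$, and using that $\xi_M$ depends smoothly on the base point (so $\xi_M(m')=\xi_M(m)+O(\|\mathbf v\|)$), the spatial component reduces to $\mathbf v-\vartheta\cdot\xi_M(m)+O(2)$, which is the asserted spatial part, while the angular component reduces — after using the antisymmetry of $\omega_m$ — to
\begin{equation*}
\vartheta\cdot\Phi(m')-\omega_m\big(\vartheta\cdot\xi_M(m),\mathbf v\big)+O(3).
\end{equation*}

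Finally I would expand $\vartheta\cdot\Phi(m')=\vartheta\cdot\Phi\big(\mathfrak{p}_m(\mathbf v)\big)$ to first order in $\mathbf v$: from $\Phi\big(\mathfrak{p}_m(\mathbf v)\big)=\Phi(m)+d_m\Phi(\mathbf v)+O(\|\mathbf v\|^2)$ together with the moment-map relation for the Hamiltonian action on $(M,2\omega)$, namely $d_m\langle\Phi,\xi\rangle(\mathbf v)=2\,\omega_m\big(\xi_M(m),\mathbf v\big)$, one obtains $\vartheta\cdot\Phi(m')=\vartheta\cdot\Phi(m)+2\,\omega_m\big(\vartheta\cdot\xi_M(m),\mathbf v\big)+O(3)$; substituting this into the angular component leaves $\vartheta\cdot\Phi(m)+\omega_m\big(\vartheta\cdot\xi_M(m),\mathbf v\big)+O(3)$, which is the claim. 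I expect the only delicate point to be precisely this last cancellation: one must match the numerical normalization of the symplectic twist $\omega_m(\mathbf w,\mathbf u)$ supplied by Lemma \ref{lem:comparison-heisenberg-local} — itself fixed there by comparison with Theorem 3.1 of \cite{sz} — against that of the moment-map relation, so that exactly one of the two units of $\omega_m\big(\vartheta\cdot\xi_M(m),\mathbf v\big)$ produced by the variation of $\Phi$ along $\mathbf v$ survives the subtraction. The remaining estimates are routine once the orders of vanishing are tracked carefully through the two coordinate changes.
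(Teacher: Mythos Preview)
Your proposal is correct and follows essentially the same route as the paper: apply Lemma~\ref{lem:more-refined-local-estimate} at the shifted base point $x+\mathbf v$, expand $\vartheta\cdot\Phi(m+\mathbf v)$ via the moment-map relation to produce the $2\,\omega_m(\vartheta\cdot\xi_M(m),\mathbf v)$ term, and use Lemma~\ref{lem:comparison-heisenberg-local} to pass from $\gamma_{x+\mathbf v}$ back to $\gamma_x$, which subtracts one unit and leaves the single $\omega_m$ term. The only cosmetic difference is that you invert the coordinate-change lemma before expanding $\Phi$, whereas the paper expands first and then changes charts; the bookkeeping and the key cancellation are identical.
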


\begin{proof}
By Lemma \ref{lem:more-refined-local-estimate},
\begin{eqnarray}
\label{eqn:azione-traslata}
\lefteqn{\widetilde{\mu}_{-\vartheta}(x+\mathbf{v})=}\\
&&(x+\mathbf{v})+\Big(\vartheta\cdot\Phi(m+\mathbf{v})+O\left(\|\vartheta\|^3\right),
-\vartheta\cdot\xi_M(m+\mathbf{v})+O\left(\|\vartheta\|^2\right)\Big).\nonumber
\end{eqnarray}
Now $\vartheta\cdot\xi_M(m+\mathbf{v})=\vartheta\cdot\xi_M(m)+O\left(\|(\vartheta,\mathbf{v})\|^2\right)$.
On the other hand,
\begin{eqnarray}
\label{eqn:derivata-Phi}
\vartheta\cdot\Phi(m+\mathbf{v})&=&\sum_{j=1}^\mathrm{g}\vartheta_j\Phi_j(m+\mathbf{v})\nonumber\\
&=&\sum_{j=1}^\mathrm{g}\vartheta_j\,\Big(\Phi_j(m)+d_m\Phi_j(\mathbf{v})+O\left(\|\mathbf{v}\|^2\right)\Big)\nonumber\\
&=&\sum_{j=1}^\mathrm{g}\vartheta_j\,\Big(\Phi_j(m)+2\,\omega_m\big(\xi_{jM}(m),\mathbf{v})\Big)
+O\left(\|(\vartheta,\mathbf{v}\|^3\right)\nonumber\\
&=&\vartheta\cdot \Phi(m)+2\,\omega_m\big(\vartheta\cdot \xi_M(m),\mathbf{v}\big)+O\left(\|(\vartheta,\mathbf{v}\|^3\right).
\end{eqnarray}
Hence in the Heisenberg chart $\gamma_{x+\mathbf{v}}$ (\ref{eqn:azione-traslata}) may be rewritten
\begin{eqnarray}
\label{eqn:azione-traslata-bis}
\lefteqn{\widetilde{\mu}_{-\vartheta}(x+\mathbf{v})=(x+\mathbf{v})}\\
&&+\Big(\vartheta\cdot \Phi(m)+2\,\omega_m\big(\vartheta\cdot \xi_M(m),\mathbf{v}\big)+O\left(\|(\vartheta,\mathbf{v})\|^3\right),
-\vartheta\cdot\xi_M(m)+O\left(\|(\vartheta,\mathbf{v})\|^2\right).\nonumber
\end{eqnarray}
In view of Lemma \ref{lem:comparison-heisenberg-local}, in the Heisenberg chart $\gamma_x$ (\ref{eqn:azione-traslata-bis}) is
\begin{eqnarray*}
\label{eqn:azione-traslata-tris}
\widetilde{\mu}_{-\vartheta}(x+\mathbf{v})&=&x+\Big(\vartheta\cdot \Phi(m)+2\,\omega_m\big(\vartheta\cdot \xi_M(m),\mathbf{v}\big)
-\omega_m\big(\mathbf{v},\mathbf{v}-\vartheta\cdot \xi_M(m)\big)\nonumber\\
&&+O\left(\|(\vartheta,\mathbf{v}\|^3\right),
\mathbf{v}-\vartheta\cdot\xi_M(m)+O\left(\|(\vartheta,\mathbf{v})\|^2\right)\Big).
\end{eqnarray*}
The statement follows.
\end{proof}

We conclude this section with the following:

\begin{lem}
\label{lem:heis-tg}
Suppose $x\in X$, $m=\pi(x)$, and $t\in T_m$.
If $\upsilon=(\theta,\mathbf{v})\in T_xX$,
then $d_x\widetilde{\mu}_t(\upsilon)=\big(\theta,d_m\mu_t(\mathbf{v})\big)$.
\end{lem}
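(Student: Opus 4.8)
The plan is to work with the intrinsic splitting behind the identification $T_xX\cong\mathbb{R}\times T_mM$: the line $V_xX=\ker d_x\pi=\mathrm{span}\{\partial/\partial\theta|_x\}$ and the horizontal hyperplane $H_xX=\ker\alpha_x$, which $d_x\pi$ maps isomorphically onto $T_mM$. I will show that $d_x\widetilde{\mu}_t$ respects this splitting, acting as the identity on the first factor and as $d_m\mu_t$ on the second. First I would record that, since $t$ lies in the (finite) stabilizer $T_m$, we have $\widetilde{\mu}_t(x)=x$, and because $\pi\circ\widetilde{\mu}_t=\mu_t\circ\pi$ this also gives $\mu_t(m)=m$; hence $d_x\widetilde{\mu}_t\in\mathrm{GL}(T_xX)$, $d_m\mu_t\in\mathrm{GL}(T_mM)$, and the intertwining relation $d_x\pi\circ d_x\widetilde{\mu}_t=d_m\mu_t\circ d_x\pi$ holds on all of $T_xX$.

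Next I would verify that $d_x\widetilde{\mu}_t$ preserves the two summands. Since $h$ and $\nabla_A$ (hence $\alpha$) have been arranged to be $\mathbb{T}$-invariant, $\widetilde{\mu}_t^*\alpha=\alpha$, so $d_x\widetilde{\mu}_t$ carries $H_xX=\ker\alpha_x$ onto $\ker\alpha_{\widetilde{\mu}_t(x)}=\ker\alpha_x$; and the intertwining relation above shows it carries $V_xX=\ker d_x\pi$ into $\ker d_{\widetilde{\mu}_t(x)}\pi=\ker d_x\pi$. On $H_xX$, the same relation says that $d_x\pi$ conjugates $d_x\widetilde{\mu}_t|_{H_xX}$ to $d_m\mu_t$; equivalently, the horizontal lift at $x$ of $d_m\mu_t(\mathbf{v})$ equals $d_x\widetilde{\mu}_t$ applied to the horizontal lift at $x$ of $\mathbf{v}$, for every $\mathbf{v}\in T_mM$. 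On $V_xX$ I would use that a linearization acts $\mathbb{C}$-linearly on the fibers of $A^\vee$ and therefore commutes with the structure circle action $r$ (scalar multiplication in the fibers): $\widetilde{\mu}_t\circ r_\beta=r_\beta\circ\widetilde{\mu}_t$. Differentiating $\beta\mapsto\widetilde{\mu}_t(r_\beta(x))=r_\beta(\widetilde{\mu}_t(x))=r_\beta(x)$ at $\beta=0$ yields $d_x\widetilde{\mu}_t(\partial/\partial\theta|_x)=\partial/\partial\theta|_x$, so $d_x\widetilde{\mu}_t$ is the identity on $V_xX$. Writing $\upsilon=(\theta,\mathbf{v})$ as $\theta\,\partial/\partial\theta|_x$ plus the horizontal lift of $\mathbf{v}$ and combining these facts gives $d_x\widetilde{\mu}_t(\upsilon)=(\theta,d_m\mu_t(\mathbf{v}))$, as claimed.

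No step here is genuinely delicate; the one point deserving explicit mention is that $\widetilde{\mu}_t$ commutes with the structure $S^1$-action, which is immediate from the definition of a linearization together with the fact that $X\subseteq A^\vee$ is $\widetilde{\mu}$-invariant (because $h$ is). One could alternatively phrase the computation in a fixed system of Heisenberg local coordinates centered at $x$, but since $t$ is an arbitrary element of the finite group $T_m$ rather than a small perturbation of the identity, the first-order expansions of Lemma \ref{lem:more-refined-local-estimate} and Corollary \ref{cor:yet-more-refined-local-estimate} do not apply directly, and the coordinate-free argument above is the cleaner route.
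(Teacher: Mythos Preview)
Your proof is correct and follows essentially the same approach as the paper: both arguments rest on the commutation of $\widetilde{\mu}_t$ with the structure $S^1$-action and on the $\widetilde{\mu}$-invariance of the connection (so that horizontal lifts are carried to horizontal lifts), combined with $\pi\circ\widetilde{\mu}_t=\mu_t\circ\pi$. The paper phrases this via representative curves (a horizontal lift rotated by $r_{\tau\theta}$) while you work directly with the splitting $T_xX=V_xX\oplus H_xX$, but the content is the same.
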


\begin{proof}
As a smooth path
$\gamma^\sharp :(-\epsilon,\epsilon)\rightarrow X$ tangent to $(0,\mathbf{v})$ at $\tau=0$ we can take the
horizontal lift through $x$ of any path $\gamma:(-\epsilon,\epsilon)\rightarrow M$ tangent to
$\mathbf{v}$ at $\tau=0$. Thus,
$\widetilde{\gamma}(\tau)=:r_{\tau\theta}\left(\gamma^\sharp(\tau)\right)$ is tangent to $\upsilon$ at $\tau=0$,
where $r$ denotes the standard circle action on $X$.
It follows also that $\widetilde{\mu}_t\big(\widetilde{\gamma}(\tau)\big)$ ($\tau\in (-\epsilon,\epsilon)$)
is tangent to $d_x\widetilde{\mu}_t(\upsilon)$
at $\tau=0$. On the other hand, since $\widetilde{\mu}$ and $r$ commute, we have:
\begin{equation}
\label{eqn:hor-lift}
\widetilde{\mu}_t\big(\widetilde{\gamma}(\tau)\big)=\widetilde{\mu}_t\Big(r_{\tau\theta}\big(\gamma^\sharp(\tau)\big)\Big)=
r_{\tau\theta}\Big(\widetilde{\mu}_t\big(\gamma^\sharp(\tau)\big)\Big).
\end{equation}
Since $\widetilde{\mu}$ preserves the connection, $\widetilde{\mu}_t\big(\gamma^\sharp(\tau)\big)$
is the unique horizontal lift through $x$ of the path $\mu_t\big(\gamma(\tau)\big)$, and therefore it is
tangent to $\big(0,d_m\mu_t(\mathbf{v})\big)$ at $\tau=0$. The statement follows from this and (\ref{eqn:hor-lift}).
\end{proof}

\subsection{Some linear algebra}

The proofs of the following statements are left to the reader.

\begin{lem}
\label{lem:key-det-1}
Given $n\ge 1$, suppose $\mathbf{v}\in \mathbb{R}^n$ and let $C$ be a symmetric non-singular
$n\times n$ matrix. Then the symmetric $(n+1)\times (n+1)$ matrix
$$
D=:\left(
     \begin{array}{cc}
       0 & \mathbf{v}^t \\
       \mathbf{v} & C \\
     \end{array}
   \right)
$$
has determinant $\det(D)=-\left(\mathbf{v}^t C^{-1}\mathbf{v}\right)\cdot\det(C)$.
\end{lem}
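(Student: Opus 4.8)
The plan is to reduce the computation to that of a block-triangular determinant, exploiting the fact that $C$ is invertible; in other words, to recognize $\det(D)$ as (the determinant of) the Schur complement of $C$ in $D$. First I would introduce the block-unipotent $(n+1)\times(n+1)$ matrix
\[
U=:\begin{pmatrix} 1 & -\mathbf{v}^t C^{-1}\\ \mathbf{0} & I_n\end{pmatrix},
\]
which is upper triangular with unit diagonal, so that $\det(U)=1$ and hence $\det(UD)=\det(D)$. The key step is then the one-line block multiplication
\[
U\,D=\begin{pmatrix} 1 & -\mathbf{v}^t C^{-1}\\ \mathbf{0} & I_n\end{pmatrix}\begin{pmatrix} 0 & \mathbf{v}^t\\ \mathbf{v} & C\end{pmatrix}=\begin{pmatrix} -\mathbf{v}^t C^{-1}\mathbf{v} & \mathbf{v}^t-\mathbf{v}^t C^{-1}C\\ \mathbf{v} & C\end{pmatrix}=\begin{pmatrix} -\mathbf{v}^t C^{-1}\mathbf{v} & \mathbf{0}\\ \mathbf{v} & C\end{pmatrix},
\]
where the last equality uses $\mathbf{v}^t C^{-1}C=\mathbf{v}^t$.

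The right-hand matrix is block lower triangular with diagonal blocks the $1\times 1$ scalar $-\mathbf{v}^t C^{-1}\mathbf{v}$ and the $n\times n$ block $C$; therefore its determinant is $\bigl(-\mathbf{v}^t C^{-1}\mathbf{v}\bigr)\det(C)$. Combining this with $\det(D)=\det(UD)$ gives $\det(D)=-\bigl(\mathbf{v}^t C^{-1}\mathbf{v}\bigr)\det(C)$, as asserted. A reader preferring to avoid $C^{-1}$ could instead clear the top row by column operations, or argue by cofactor expansion along the first row together with induction on $n$; the Schur-complement route above seems the most economical.

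I do not anticipate any genuine obstacle. The only things to keep straight are that the hypothesis that $C$ is non-singular is exactly what licenses writing $C^{-1}$, and that the relevant Schur complement is here a $1\times 1$ block, so its "determinant" is just the scalar $-\mathbf{v}^t C^{-1}\mathbf{v}$, the minus sign tracing back to the $0$ in the upper-left corner of $D$. The symmetry of $C$ (hence of $D$) is irrelevant to this identity and may be ignored.
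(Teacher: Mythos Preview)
Your proof is correct: the Schur-complement reduction via the unipotent matrix $U$ is clean and the block computation checks out. Note that the paper itself leaves this lemma to the reader, so there is no proof in the text to compare against; your argument is exactly the sort of standard verification the author has in mind.
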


A variant of Lemma \ref{lem:key-det-1} is as follows:

\begin{lem}
\label{lem:key-det-3}
Let $(V,\Psi)$ be an $n$-dimensional Euclidean vector space, let $(V^\vee,\Psi^\vee)$ be its dual space, endowed with
the induced Euclidean structure. Suppose
$\phi\in V^\vee\setminus\{ 0\}$.
Let $\Omega:V\times V\rightarrow \mathbb{R}$ be a scalar product, and suppose that
the restriction $\Omega_\phi:\ker(\phi)\times \ker(\phi)\rightarrow \mathbb{R}$ of $\Omega$ to $\ker(\phi)$ is non-degenerate.
Let $\mathcal{B}=(v_1,\ldots,v_n)$ be any orthonormal basis of $(V,\Psi)$, and let $\mathcal{B}^*=(v_1^*,\ldots,v_n^*)$
be the dual basis. Let $C$ be the matrix of
$\Omega$ with respect to $\mathcal{B}$.
If $\phi=\sum_j\phi_j\,v_j^*$,
let us write $\phi_\mathcal{B}=(\phi_1,\ldots,\phi_n)^t$. Then
$$
\left|
  \begin{array}{cc}
    0 & \phi_\mathcal{B}^t \\
    \phi_\mathcal{B} & C \\
  \end{array}
\right|=-\|\phi\|^2_\Psi\,\det{}\!_\Psi\big(\Omega_\phi),
$$
where $\det _\Psi\big(\Omega_\phi)$ is the determinant of the matrix of $\Omega_\phi$
with respect to any orthonormal basis of $\ker(\phi)$ (orthonormal with respect to $\Psi$).
\end{lem}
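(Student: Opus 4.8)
The plan is to reduce, by an orthogonal change of frame, to the situation where $\phi$ is a multiple of the last coordinate functional, and then evaluate the bordered determinant directly by Laplace expansion.

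The first step is to note that all three quantities in the asserted identity are unchanged if we replace $\mathcal{B}$ by another $\Psi$-orthonormal basis $\mathcal{B}'$: indeed $\|\phi\|_\Psi$ is intrinsic, $\det{}_\Psi(\Omega_\phi)$ is basis-independent by its very definition in the statement, and for the left-hand side one observes that the change-of-basis matrix $P$ satisfies $P^tP=I$, $C'=P^tCP$ and $\phi_{\mathcal{B}'}=P^t\phi_{\mathcal{B}}$, whence
$$
\begin{pmatrix} 0 & \phi_{\mathcal{B}'}^t \\ \phi_{\mathcal{B}'} & C' \end{pmatrix}
=\begin{pmatrix} 1 & 0 \\ 0 & P^t \end{pmatrix}
\begin{pmatrix} 0 & \phi_{\mathcal{B}}^t \\ \phi_{\mathcal{B}} & C \end{pmatrix}
\begin{pmatrix} 1 & 0 \\ 0 & P \end{pmatrix},
$$
so the two bordered determinants differ by $(\det P)^2=1$.

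Next I would choose the adapted basis. Let $u\in V$ be the $\Psi$-Riesz representative of $\phi$, i.e. $\phi(w)=\Psi(u,w)$ for all $w\in V$; then $\ker(\phi)=u^{\perp_\Psi}$ and $\|u\|_\Psi=\|\phi\|_\Psi$. Pick a $\Psi$-orthonormal basis $e_1,\dots,e_{n-1}$ of $\ker(\phi)$ and set $e_n=u/\|u\|_\Psi$; then $\mathcal{B}'=(e_1,\dots,e_n)$ is $\Psi$-orthonormal with $\phi_{\mathcal{B}'}=(0,\dots,0,\|\phi\|_\Psi)^t$. Writing the matrix of $\Omega$ in $\mathcal{B}'$ in block form as $\left(\begin{smallmatrix} G & b \\ b^t & c\end{smallmatrix}\right)$, where $G$ is the Gram matrix of $\Omega_\phi$ in the $\Psi$-orthonormal basis $(e_1,\dots,e_{n-1})$ of $\ker(\phi)$ — so that $\det G=\det{}_\Psi(\Omega_\phi)$ — the bordered matrix takes the form
$$
D'=\begin{pmatrix} 0 & \mathbf{0}^t & \|\phi\|_\Psi \\ \mathbf{0} & G & b \\ \|\phi\|_\Psi & b^t & c\end{pmatrix}.
$$
Expanding $\det D'$ along the first row leaves only the top-right entry $\|\phi\|_\Psi$, with cofactor sign $(-1)^{n}$; the complementary minor has first column $(0,\dots,0,\|\phi\|_\Psi)^t$, and expanding it along that column produces $(-1)^{n+1}\|\phi\|_\Psi\det G$. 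Multiplying, $\det D'=(-1)^{2n+1}\|\phi\|_\Psi^2\det G=-\|\phi\|_\Psi^2\det{}_\Psi(\Omega_\phi)$, which by the first step is the desired value of the original bordered determinant.

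Alternatively, once one is in the basis $\mathcal{B}'$ one may simply apply Lemma \ref{lem:key-det-1} with $\mathbf{v}=\phi_{\mathcal{B}'}$ together with the cofactor identity $[(C')^{-1}]_{nn}=\det G/\det C'$, provided $\Omega$ is nondegenerate on all of $V$; the direct expansion above avoids even that mild assumption. There is no genuine obstacle here: the only points requiring care are the sign bookkeeping in the two Laplace expansions and the (elementary) verification that the bordered determinant is insensitive to the orthogonal change of frame.
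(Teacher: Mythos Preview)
Your proof is correct. The paper explicitly leaves the proof of this lemma to the reader, so there is no argument to compare against; your reduction by an orthogonal change of frame followed by a direct Laplace expansion is exactly the kind of elementary verification the author has in mind (the paper introduces the lemma as ``a variant of Lemma~\ref{lem:key-det-1}'', and you note that route as an alternative).
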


Here, $\|\phi\|^2_\Psi=\sum_j\phi_j^2$ is the squared norm of $\phi$ with respect to $\Psi^\vee$.

Let us now suppose that $\Omega$ is also positive definite.
Comparing Lemmata \ref{lem:key-det-1} and \ref{lem:key-det-3}, we get

\begin{lem}
\label{lem:key-det-4}
Let $V$ be an $n$-dimensional vector space, and let $\Psi$, $\Omega$ be two Euclidean structures on
$V$. Given any $\phi\in V^\vee\setminus \{0\}$, we have
$$
\|\phi\|^2_\Omega\cdot \det{}\!_\Psi\big(\Omega)=\|\phi\|^2_\Psi\,\det{}\!_\Psi\big(\Omega_\phi),
$$
where $\det{}\!_\Psi\big(\Omega)$ is the determinant of the matrix representing $\Omega$ with respect to
any orthonormal basis of $(V,\Psi)$.
\end{lem}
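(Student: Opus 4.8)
The plan is to obtain Lemma~\ref{lem:key-det-4} by evaluating one and the same bordered determinant in two different ways, namely via Lemma~\ref{lem:key-det-1} and via Lemma~\ref{lem:key-det-3}. First I would fix an orthonormal basis $\mathcal{B}=(v_1,\ldots,v_n)$ of $(V,\Psi)$, with dual basis $\mathcal{B}^*=(v_1^*,\ldots,v_n^*)$ of $(V^\vee,\Psi^\vee)$, and let $C$ denote the matrix of $\Omega$ with respect to $\mathcal{B}$. Since $\Omega$ is a Euclidean structure, $C$ is symmetric and positive definite, hence non-singular, so Lemma~\ref{lem:key-det-1} applies; likewise the restriction $\Omega_\phi$ of $\Omega$ to $\ker(\phi)$ is positive definite, hence non-degenerate, so Lemma~\ref{lem:key-det-3} applies as well. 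Writing $\phi=\sum_j\phi_j\,v_j^*$ and $\phi_\mathcal{B}=(\phi_1,\ldots,\phi_n)^t$, Lemma~\ref{lem:key-det-1} with $\mathbf{v}=\phi_\mathcal{B}$ gives
$$
\left|\begin{array}{cc} 0 & \phi_\mathcal{B}^t \\ \phi_\mathcal{B} & C \end{array}\right|=-\big(\phi_\mathcal{B}^t\,C^{-1}\,\phi_\mathcal{B}\big)\cdot\det(C),
$$
while Lemma~\ref{lem:key-det-3} gives
$$
\left|\begin{array}{cc} 0 & \phi_\mathcal{B}^t \\ \phi_\mathcal{B} & C \end{array}\right|=-\|\phi\|_\Psi^2\,\det{}\!_\Psi\big(\Omega_\phi\big).
$$
Equating the two right-hand sides yields $\big(\phi_\mathcal{B}^t\,C^{-1}\,\phi_\mathcal{B}\big)\,\det(C)=\|\phi\|_\Psi^2\,\det{}\!_\Psi\big(\Omega_\phi\big)$.

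It then remains to match the two factors on the left with the quantities appearing in the statement. By definition $\det(C)=\det{}\!_\Psi(\Omega)$. For the other factor, recall that $\Omega$ induces the isomorphism $V\to V^\vee$ whose matrix with respect to $\mathcal{B}$ and $\mathcal{B}^*$ is $C$; since $\mathcal{B}^*$ is $\Psi^\vee$-orthonormal, the Euclidean structure induced on $V^\vee$ by duality from $\Omega$ has matrix $C^{-1}$ in the basis $\mathcal{B}^*$, whence $\|\phi\|_\Omega^2=\phi_\mathcal{B}^t\,C^{-1}\,\phi_\mathcal{B}$. Substituting these two identifications into the displayed equality gives $\|\phi\|_\Omega^2\cdot\det{}\!_\Psi(\Omega)=\|\phi\|_\Psi^2\,\det{}\!_\Psi(\Omega_\phi)$, which is the assertion.

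There is essentially no obstacle here: the statement is a purely linear-algebraic identity obtained by combining the two preceding lemmas. The only step that warrants a line of care is the dual-metric identity $\|\phi\|_\Omega^2=\phi_\mathcal{B}^t\,C^{-1}\,\phi_\mathcal{B}$; should one prefer to avoid the dual metric altogether, one may instead choose $\mathcal{B}$ to be simultaneously $\Psi$-orthonormal and $\Omega$-orthogonal (simultaneous diagonalization of the two quadratic forms), so that $C=\mathrm{diag}(\lambda_1,\ldots,\lambda_n)$ with the $\lambda_j$ the eigenvalues of $\Omega$ relative to $\Psi$, and the identification becomes transparent.
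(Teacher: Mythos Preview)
Your proposal is correct and follows precisely the route indicated in the paper, which simply states that the lemma is obtained by ``comparing Lemmata~\ref{lem:key-det-1} and~\ref{lem:key-det-3}''. The only minor comment is that the clause ``since $\mathcal{B}^*$ is $\Psi^\vee$-orthonormal'' is not actually what drives the identity $\|\phi\|_\Omega^2=\phi_\mathcal{B}^t\,C^{-1}\,\phi_\mathcal{B}$; that identity is just the standard fact that the dual inner product to $\Omega$ has matrix $C^{-1}$ in the basis dual to the one in which $\Omega$ has matrix $C$, independently of $\Psi$.
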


\section{Proof of Theorem \ref{thm:circle-case}}
\begin{proof}
Let us prove the first statement. If $\Phi>0$, for any $k\le 0$ and $\ell \ge 0$ we have
$k\not\in \ell \Phi(M)$. Hence, $\widetilde{H}_k(X)\cap H_\ell(X)=\{0\}$ \cite{gs1}.
It follows that
$$
\widetilde{H}_k(X)=\bigoplus _{\ell \ge 0}\widetilde{H}_k(X)\cap H_\ell(X)=\{0\}.
$$

Let us now prove the third statement.
We shall adapt the arguments in \cite{z}, \cite{bsz} and \cite{sz}
for the standard circle action.
In Heisenberg local coordinates centered at $x$, let us set
$$x_{jk}=:x+\left(\frac{\theta_j}{\sqrt{k}},\frac{\mathbf{v}_j}{\sqrt{k}}\right)$$
for $j=1,2$.
For any $x\in X$, we have
\begin{eqnarray}
\label{eqn:equiv-proj-circle-case}
\widetilde{\Pi}_k\left(x_{1k},
x_{2k}\right)=\frac{1}{2\pi}\,\int_{-\pi}^{\pi}e^{-ik\vartheta}\,
\Pi\Big(\widetilde{\mu}_{-\vartheta}\left(x_{1k}\right),
x_{2k}\Big)\,\mathrm{d}\vartheta,
\end{eqnarray}
where we work in the standard angular coordinate and write $\vartheta$ for $e^{i\vartheta}$.

For $m=\pi(x)$, set $r_m=:|T_m|$, and suppose $T_m=\left\{e^{ib_1},\ldots,e^{ib_{r_m}}\right\}$
with $-\pi<b_e<\pi$ (if $-1\in T_m$, we need only shift integration to $(-\pi+\delta,\pi+\delta)$
for some suitably small $\delta>0$).
For some sufficiently small $\epsilon>0$, let $\varrho>0$ be a bump function,
supported in
$\bigcup_{e=1}^{r_m}(b_e-\epsilon,b_e+\epsilon)$ and identically equal to $1$ on
$\bigcup_{e=1}^{r_m}(b_e-\epsilon/2,b_e+\epsilon/2)$. Then
$$
\widetilde{\Pi}_k(x_{1k},x_{2k})=
\widetilde{\Pi}_k(x_{1k},x_{2k})^{(1)}+\widetilde{\Pi}_k(x_{1k},x_{2k})^{(2)},
$$ where
$\widetilde{\Pi}_k(x_{1k},x_{2k})^{(1)}$ (respectively, $\widetilde{\Pi}_k(x_{1k},x_{2k})^{(2)}$) is defined as in
(\ref{eqn:equiv-proj-circle-case}), with the integrand multiplied by $\varrho$
(respectively, by $1-\varrho$).

\begin{lem}
\label{lem:1st-reduction}
$\widetilde{\Pi}_k(x_{1k},x_{2k})^{(2)}=O\left(k^{-\infty}\right)$ as $k\rightarrow \infty$.
\end{lem}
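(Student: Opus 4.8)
The plan is to use the fact that the full Szeg\"o kernel $\Pi$ is $\mathcal{C}^\infty$ away from the diagonal $\Delta_X\subseteq X\times X$ — this is part of the Boutet de Monvel--Sj\"ostrand description recalled above (\cite{bs}), since $\mathrm{WF}(\Pi)=\Sigma^\sharp$ projects onto $\Delta_X$ — together with the elementary remark that the $(2)$-piece of (\ref{eqn:equiv-proj-circle-case}) is, for each fixed $x$, the $k$-th Fourier coefficient of a smooth $2\pi$-periodic function of $\vartheta$, hence $O\left(k^{-\infty}\right)$.

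First I would establish an off-diagonal separation that is uniform in the admissible data. Since $\widetilde{\mu}$ is a global $\mathbb{T}^1$-action, $\widetilde{\mu}_{-\vartheta}(x)=x$ exactly when $e^{i\vartheta}\in T_m$; as $\mathrm{supp}(1-\varrho)$ is a compact subset of $\mathbb{T}^1$ disjoint from $\bigcup_e(b_e-\epsilon/2,\,b_e+\epsilon/2)$, there is $\delta_0=\delta_0(x,\epsilon)>0$ with $\mathrm{dist}_X\big(\widetilde{\mu}_{-\vartheta}(x),x\big)\ge\delta_0$ for every $\vartheta\in\mathrm{supp}(1-\varrho)$. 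On the other hand $x_{jk}=x+\upsilon_j/\sqrt{k}$ with $\|\upsilon_j\|\le Ck^{1/9}=o\left(\sqrt{k}\right)$, so $\mathrm{dist}_X(x_{jk},x)\to 0$ uniformly over the admissible $\upsilon_j$; invoking a Lipschitz bound for the maps $\widetilde{\mu}_{-\vartheta}$ uniform in $\vartheta\in\mathbb{T}^1$ (which exists by compactness and smoothness of $\widetilde{\mu}$), we conclude that for all large $k$
\[
\mathrm{dist}_X\big(\widetilde{\mu}_{-\vartheta}(x_{1k}),\,x_{2k}\big)\ \ge\ \tfrac{\delta_0}{2}\qquad\text{for all }\vartheta\in\mathrm{supp}(1-\varrho),
\]
uniformly in the $\upsilon_l$ with $\|\upsilon_l\|\le Ck^{1/9}$.

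The statement then follows by a repeated integration by parts (non-stationary phase). Put $g_k(\vartheta)=:(1-\varrho(\vartheta))\,\Pi\big(\widetilde{\mu}_{-\vartheta}(x_{1k}),x_{2k}\big)$, so that $\widetilde{\Pi}_k(x_{1k},x_{2k})^{(2)}=\frac{1}{2\pi}\int_{-\pi}^{\pi}e^{-ik\vartheta}\,g_k(\vartheta)\,\mathrm{d}\vartheta$ is precisely the $k$-th Fourier coefficient of $g_k$. By the previous step, for $k$ large the argument of $\Pi$ lies in the compact subset $\{(p,q)\in X\times X:\mathrm{dist}_X(p,q)\ge\delta_0/2\}$, on which $\Pi$ is smooth with all derivatives bounded; since $(\vartheta,w)\mapsto\widetilde{\mu}_{-\vartheta}(x+w)$ is smooth with $\vartheta$-derivatives bounded uniformly for $w$ near $\mathbf{0}$, the chain rule shows that $g_k$ is a smooth $2\pi$-periodic function with $\|g_k\|_{\mathcal{C}^N}$ bounded uniformly in $k$ (and in the admissible $\upsilon_l$) for every $N$. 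Integrating by parts $N$ times — legitimate with no boundary terms by periodicity — gives $\big|\widetilde{\Pi}_k(x_{1k},x_{2k})^{(2)}\big|\le k^{-N}\,\sup_\vartheta\big|g_k^{(N)}(\vartheta)\big|\le C_N\,k^{-N}$ for every $N$, i.e.\ $O\left(k^{-\infty}\right)$.

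The one point deserving care is uniformity in $x$, since a priori both the cut-off $\varrho=\varrho_x$ and the constant $\delta_0$ depend on $x$ (the stabilizer $T_m$ may jump). This I would handle in the standard way: cover the compact manifold $X$ by finitely many open sets on each of which the branch points $b_e(x)$, hence a choice of $\varrho_x$, vary continuously with the base point, and take the minimum of the finitely many resulting $\delta_0$'s; the estimates above then hold uniformly in $x$ as well. I do not expect any genuine obstacle here — the substance of the lemma is entirely contained in the smoothness of $\Pi$ off $\Delta_X$ and in the separation estimate of the second step.
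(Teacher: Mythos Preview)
Your argument is correct and matches the paper's own proof essentially line for line: both rest on the off-diagonal smoothness of $\Pi$ (singular support on the diagonal), a uniform separation $\mathrm{dist}_X\big(\widetilde{\mu}_{-\vartheta}(y),y'\big)\ge\delta_1$ for $y,y'$ near $x$ and $\vartheta\in\mathrm{supp}(1-\varrho)$, and the rapid decay of Fourier coefficients of a smooth periodic function. Your extra care with the Lipschitz bound and the uniformity-in-$x$ discussion is not in the paper's short proof but is consistent with it; note only that your proposed finite cover ``where the $b_e(x)$ vary continuously'' is not quite the right picture, since $T_m$ is only upper semicontinuous---but this is harmless, because for $x'$ near $x$ one has $T_{m'}\subseteq T_m$, so the single cut-off $\varrho_x$ already works on a full neighborhood of $x$, and compactness of $X$ finishes the job.
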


\begin{proof}
There exist $\delta_1,\delta_2>0$ such that $\mathrm{dist}_X\big(\widetilde{\mu}_{-\vartheta}\left(y\right),
y'\big)\ge \delta_1$ whenever $\mathrm{dist}_X(x,y), \,\mathrm{dist}_X(x,y')\le \delta_2$ and
$\vartheta\in \mathrm{supp}(1-\varrho)$.
Since the singular support of $\Pi$ is the diagonal in $X\times X$ (\cite{f}, \cite{bg}),
in the same range
$$
f_{yy'}(\vartheta)=:\big(1-\varrho(\vartheta)\big)\,\Pi\left(\widetilde{\mu}_{-\vartheta}(x_{1k}),x_{2k}\right)
$$
is $\mathcal{C}^\infty$ on $S^1$, and its $k$-th Fourier coefficient $\widetilde{\Pi}_k(y,y')^{(2)}$
is uniformly $O\left(k^{-\infty}\right)$ as $k\rightarrow \infty$. Then we need only set $y=x_{1k}$,
$y'=x_{2k}$.
\end{proof}

Let $\sim$ stand for \lq has the same asymptotics as\rq; Lemma \ref{lem:1st-reduction} implies
$\widetilde{\Pi}_k(x_{1k},x_{2k})\sim\widetilde{\Pi}_k(x_{1k},x_{2k})^{(1)}$ as $k\rightarrow +\infty$.
On the support of $\varrho$, $\widetilde{\mu}_{-\vartheta}(x_{1k})$ belongs to a small neighborhood
of $x$, and there we may represent $\Pi$ as an FIO of the form
\begin{equation}
\label{eqn:szego-as-FIO}
\Pi\left(x',x''\right)=\int_0^{+\infty}e^{it\psi\left(x',x''\right)}\,s\left(t,x',x''\right)\,dt+
S\left(x',x''\right),
\end{equation}
where the phase $\psi$ satisfies $\Im(\psi)\ge 0$ and is essentially determined by the metric,
$s$ is a classical symbol of degree d, and $S$ is $\mathcal{C}^\infty$ \cite{bs}. The argument used for
Lemma \ref{lem:1st-reduction} implies that $S$ contributes negligibly to the asymptotics.

With the change of variable  $t\rightarrow kt$, we conclude that
\begin{eqnarray}
\label{eqn:fio-pi-rescaled-1-dim}
\lefteqn{\widetilde{\Pi}_k(x_{1k},x_{2k})}\\
&\sim&\frac{1}{2\pi}\,\int_0^{+\infty}\int_{-\pi}^\pi\,e^{-ik\vartheta}
e^{it\psi\left(\widetilde{\mu}_{-\vartheta}(x_{1k}),x_{2k}\right)}\,
s\left(t,\widetilde{\mu}_{-\vartheta}(x_{1k}),x_{2k}\right)\,\varrho(\vartheta)
\,\mathrm{d}t\,\mathrm{d}\vartheta\nonumber\\
&=&\frac{k}{2\pi}\,\int_0^{+\infty}\int_{-\pi}^\pi\,
e^{ik\Psi_k(x,t,\vartheta)}\,s\left(kt,\widetilde{\mu}_{-\vartheta}(x_{1k}),x_{2k}\right)\,\varrho(\vartheta)
\,\mathrm{d}t\,\mathrm{d}\vartheta,\nonumber
\end{eqnarray}
where
\begin{equation}
\label{eqn:phase-dim-1}
\Psi_k(x,t,\vartheta)=t\psi\left(\widetilde{\mu}_{-\vartheta}(x_{1k}),x_{2k}\right)-\vartheta.
\end{equation}
Let us denote by $P_k(x)$ the right hand side of (\ref{eqn:fio-pi-rescaled-1-dim}).
For some $C\gg 0$ we choose $\gamma\in \mathcal{C}^\infty_0\big(\big(1/(2C),C\big)\big)$ such that
$\gamma=1$ on $(1/C,C)$, and write $P_k(x)=P_k(x)'+P_k(x)''$, where $P_k(x)'$ (respectively,
$P_k(x)''$) is defined as $P_k(x)$ with the integrand multiplied by $\gamma(t)$
(respectively, by $1-\gamma(t)$).

\begin{lem}
\label{lem:second-reduction-dim-1}
$P_k(x)''=O\left(k^{-\infty}\right)$ as $k\rightarrow +\infty$.
\end{lem}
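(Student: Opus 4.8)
The plan is to show that $P_k(x)''$ arises from a region of $(t,\vartheta)$-space where the phase $\Psi_k$ in \eqref{eqn:phase-dim-1} has no stationary points, so that non-stationary phase gives rapid decay; the only subtlety compared to the classical case is that we must rule out stationarity uniformly as the reference point varies and as the rescaled displacements $\upsilon_l/\sqrt k$ shrink, and we must handle the fact that the symbol $s(kt,\cdot,\cdot)$ is being evaluated at large first argument $kt$.

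\medskip

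\noindent\textbf{Step 1: Control of the symbol for large $t$.} On the support of $1-\gamma$ we have either $t\le 1/(2C)$ or $t\ge C$. On the region $t\ge C$, I would use that $s$ is a classical symbol of degree $\mathrm d$ in $t$, so $s(kt,x',x'')=O\big((kt)^{\mathrm d}\big)$, while each integration by parts in $\vartheta$ (see Step 2) gains a factor $k^{-1}$ without losing powers of $t$; but to make the $t$-integral converge after sufficiently many integrations by parts one should instead first note that the relevant piece of $\Pi$ is, up to a smooth term, a Fourier integral operator whose kernel is rapidly decreasing off the diagonal, and more directly that for $t$ in a bounded range away from $0$ the phase derivative $\partial_\vartheta\Psi_k$ is bounded below. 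The cleanest route: on $\operatorname{supp}(1-\gamma)$ split off $t\ge C$ and observe that there $t\,\psi$ has $\partial_t\Psi_k=\psi\big(\widetilde\mu_{-\vartheta}(x_{1k}),x_{2k}\big)$ with $\Im\psi\ge0$ and $\Im\psi$ bounded below by a positive constant once $\widetilde\mu_{-\vartheta}(x_{1k})$ is at definite distance from $x_{2k}$ — which, on the support of $\varrho$ restricted to $t$ large, need not hold, so one integrates by parts in $t$ using $\partial_t$, each step gaining $(kt)^{-1}$ and leaving the $t$-integral convergent. Thus the $t\ge C$ contribution is $O(k^{-\infty})$.

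\medskip

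\noindent\textbf{Step 2: The region $t\le 1/(2C)$.} Here the decisive point is that $\partial_\vartheta\Psi_k = t\,\partial_\vartheta\big[\psi(\widetilde\mu_{-\vartheta}(x_{1k}),x_{2k})\big]-1$, and since $\psi$ and its derivatives are bounded on the (compact) relevant set while $t\le 1/(2C)$ with $C$ large, we get $|\partial_\vartheta\Psi_k|\ge 1/2$ throughout this region, uniformly in $x$ and in the rescaled displacements. Repeated integration by parts in $\vartheta$ against $e^{ik\Psi_k}$ then produces an arbitrary negative power of $k$; one only needs that the symbol $s(kt,\cdot,\cdot)$ and all $\vartheta$-derivatives of $\Psi_k$ and of $\varrho\gamma$ are bounded uniformly, which holds because on this region $kt$ stays $\le k/(2C)$ — wait, that is large — so again one invokes the symbol estimate $s(kt,\cdot)=O\big((kt)^{\mathrm d}\big)=O(k^{\mathrm d})$, a fixed polynomial loss absorbed by the negative powers gained. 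The uniformity over $\|\upsilon_l\|\le Ck^{1/9}$ enters only through the smooth dependence of $\widetilde\mu_{-\vartheta}(x_{1k})$ on the parameters, whose derivatives grow at most polynomially in the $\upsilon_l$.

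\medskip

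\noindent\textbf{Main obstacle.} The genuine difficulty is bookkeeping the two ``large argument'' features simultaneously: the symbol evaluated at $kt$ and the phase carrying a factor $k$. The resolution is that the integration-by-parts operator in $t$ (on $t\ge C$) or in $\vartheta$ (on $t\le 1/(2C)$) gains $k^{-1}$ per step while the symbol only ever costs a fixed power $k^{\mathrm d}$ of $k$ and bounded powers of $t$, and in the $t\ge C$ case the resulting $t$-integrals converge because each $\partial_t$ applied to $(kt)^{-1}$-type factors improves decay in $t$. Once this is arranged, both pieces of $P_k(x)''$ are $O(k^{-\infty})$, uniformly in the stated range, which is the assertion of Lemma \ref{lem:second-reduction-dim-1}.
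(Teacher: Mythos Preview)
Your Step~2 (the region $t \le 1/(2C)$) is correct and matches the paper's argument. The gap is in Step~1.

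For $t \ge C$ you settle on integrating by parts in $t$. But $\partial_t \Psi_k = \psi\big(\widetilde{\mu}_{-\vartheta}(x_{1k}), x_{2k}\big)$, and on the support of $\varrho$ near $\vartheta = b_e$ (with $e^{ib_e} \in T_m$) we have $\widetilde{\mu}_{-\vartheta}(x_{1k})$ arbitrarily close to $x_{2k}$, so $\psi$ is close to zero and the factor you would pick up is $(k\psi)^{-1}$, which is unbounded. (It is $(k\psi)^{-1}$, not $(kt)^{-1}$: $\psi$ does not depend on $t$.) So integration by parts in $t$ does not give the claimed gain, and the argument as written does not close.

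The paper integrates by parts in $\vartheta$ on \emph{both} regions. You actually started down this road and abandoned it because you thought it ``gains $k^{-1}$ without losing powers of $t$'' and hence leaves the $t$-integral divergent. That is the mistake. Near $\vartheta = b_e$ one computes $\partial_\vartheta \psi\big(\widetilde{\mu}_{-\vartheta}(x),x\big)\big|_{\vartheta=b_e} = \Phi(m)$, so on $\operatorname{supp}(\varrho)$ one has $a/2 \le |\partial_\vartheta \psi| \le 2A$ with $a = \min\Phi > 0$, $A = \max\Phi$. Hence for $t \ge C$,
\[
|\partial_\vartheta \Psi_k| = |t\,\partial_\vartheta\psi - 1| \ge \tfrac{a}{2}\,t - 1 \ge \tfrac{a}{4}\,t + \big(\tfrac{a}{4}C - 1\big),
\]
which grows \emph{linearly} in $t$. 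Each integration by parts in $\vartheta$ therefore gains a factor $\lesssim (kt)^{-1}$, not merely $k^{-1}$; after $N > \mathrm{d}+1$ steps the integrand is $O\big((kt)^{\mathrm{d}-N}\big)$, the $t$-integral over $[C,\infty)$ converges, and the contribution is $O(k^{-\infty})$.
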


\begin{proof}
Working near some $b_e$ at a time, let us write $\vartheta=b_e+\theta$, $\theta\sim 0$.
In view of (\ref{eqn:T-1-local-action}), in Heisenberg local coordinates centered at $x$ with $m=\pi(x)$
we have
\begin{equation}
\label{eqn:T-1-local-action-proof}
\widetilde{\mu}_{-\vartheta}(x)=\widetilde{\mu}_{-\theta}(x)=x+\big(\theta\,\Phi(m),-\theta\,\xi_M(m)\big)+O\left(\theta^2\right).
\end{equation}
Now $d_{(y,y)}\psi=\big(\alpha_y,-\alpha_y\big)$ for any $y\in Y$. Therefore, in view of (\ref{eqn:T-1-local-action-proof})
and the definition of Heisenberg local coordinates,
\begin{equation}
\label{eqn:derivata-vartheta-dim-1}
\left.\partial_\vartheta\psi\left(\widetilde{\mu}_{-\vartheta}(x),x\right)\right|_{b_e}=
\left.\partial_\theta\psi\left(\widetilde{\mu}_{-\theta}(x),x\right)\right|_{0}=\Phi(m).
\end{equation}
Consequently, if $0<\epsilon\ll 1$ and $k\gg 0$,
at $\vartheta=b_e+\theta\in \mathrm{supp}(\varrho)$
we have
\begin{equation}
\label{eqn:bound-on-partial-vartheta}
2\,A\ge 2\,\Phi(m)\ge \big|\partial_\theta\psi\left(\widetilde{\mu}_{-\theta}(x_{1k}),x_{2k}\right)\big|
\ge \frac 12\,\Phi(m)\ge \frac a2,
\end{equation}
where $a=\min\|\Phi\|$, $A=\max\|\Phi\|$.

Thus for $t\le 1/C$ we have
\begin{equation*}
\big|\partial_\vartheta\Psi_k(x,t,\vartheta)\big|=\big|t\,\partial_\vartheta
\psi\left(\widetilde{\mu}_{-\vartheta}(x_{1k}),x_{2k}\right)-1\big|
\ge 1-2\,\frac AC\ge \frac 12,
\end{equation*}
while for $t\ge C$
$$
\big|\partial_\vartheta\Psi_k(x,t,\vartheta)\big|=\big|t\,\partial_\vartheta\psi\left(\widetilde{\mu}_{-\vartheta}(x_{1k}),x_{2k}\right)-1\big|
\ge t\,\frac a2-1\ge \frac 14\,at+\left(\frac 14 \,aC-1\right).
$$
The statement follows integrating by parts in $d\vartheta$.
\end{proof}

We can write $\varrho=\sum_{e=1}^{r_m}\varrho_e$, where $\varrho_e\in \mathcal{C}^\infty_0\big((b_e-\epsilon,b_e+\epsilon)\big)$.
In view of Lemma \ref{lem:second-reduction-dim-1},
\begin{eqnarray}
\label{eqn:variuospieces}
\lefteqn{\widetilde{\Pi}_k(x_{1k},x_{2k})}\\
&\sim&\sum_{e=1}^{r_m}\frac{k}{2\pi}\int_{1/2C}^{2C}\int_{b_e-\epsilon}^{b_e+\epsilon}
e^{ik\Psi_k(x,t,\vartheta)}\,s\left(k t,\widetilde{\mu}_{-\vartheta}(x_{1k}),x_{2k}\right)\,\varrho_e(\vartheta)\,\gamma(t)\,\mathrm{d}t\,\mathrm{d}\vartheta.
\nonumber
\end{eqnarray}
Let us study the asymptotics of the $e$-th summand $S_e(k)$ in (\ref{eqn:variuospieces}).
On the support of $\varrho_e$, we keep writing $\vartheta=b_e+\eta$, with
$|\eta|<\epsilon$. Then
\begin{eqnarray}
\label{eqn:S-e-}
\lefteqn{S_e(k)=e^{-ikb_e}}\\
&&\cdot\frac{k}{2\pi}\int_{1/2C}^{2C}\int_{-\epsilon}^{\epsilon}
e^{ik\Psi^{(e)}_k(x,t,\eta)}\,s\big(k t,\widetilde{\mu}_{-\eta-b_e}(x_{1k}),x_{2k}\big)\,\varrho_0(\eta)\,\gamma(t)\,\mathrm{d}t\,\mathrm{d}\eta,
\nonumber
\end{eqnarray}
where now
\begin{equation}
\label{eqn:phase-dim-1-e}
\Psi^{(e)}_k(x,t,\eta)=t\psi\big(\widetilde{\mu}_{-\eta-b_e}(x_{1k}),x_{2k}\big)-\eta.
\end{equation}
We have assumed $\varrho_e(\vartheta)=\varrho_0(\vartheta-b_e)$.

Let $f\in \mathcal{C}_0^\infty(\mathbb{R})$ be a bump function with $f(\eta)=1$ for $|\eta|<1/2$ and $f(\eta)=0$ for
$|\eta|>1$, and set $f_k(\eta)=:f\left(k^{7/18}\eta\right)$. Inserting the identity $1=f_k(\eta)+\big(1-f_k(\eta)\big)$ in
(\ref{eqn:S-e-}), we get $S_e(k)=S_e'(k)+S_e''(k)$, where $S_e'(k)$ (resp., $S_e'(k)''$)
is defined as in (\ref{eqn:S-e-}), except that the integrand has been
multiplied by $f_k(\eta)$ (resp., $1-f_k(\eta)$).

\begin{lem}
\label{lem:S-k''}
$S_e''(k)=O\left(k^{-\infty}\right)$ as $k\rightarrow +\infty$.
\end{lem}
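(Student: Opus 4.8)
The plan is to estimate $S_e''(k)$ crudely in absolute value, using only the positivity $\Im\psi\ge 0$ of the parametrix phase together with the standard quadratic lower bound $\Im\psi(x',x'')\ge c_0\,\mathrm{dist}_X(x',x'')^2$ for $(x',x'')$ near the diagonal (a basic property of the Boutet de Monvel--Sj\"ostrand construction of \cite{bs}, also exploited in \cite{sz}). Since $1-f_k(\eta)$ is supported where $|\eta|\ge\frac12\,k^{-7/18}$, the entire content of the lemma is the uniform geometric estimate
\[
\mathrm{dist}_X\big(\widetilde{\mu}_{-\eta-b_e}(x_{1k}),\,x_{2k}\big)\ \ge\ \frac a2\,|\eta|,\qquad a:=\min_M\Phi>0,
\]
on that support, valid uniformly in $x\in X$ and in $\upsilon_1,\upsilon_2\in\xi_X(x)^\perp$ with $\|\upsilon_l\|\le Ck^{1/9}$. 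Granting it, $\big|e^{ik\Psi^{(e)}_k(x,t,\eta)}\big|=e^{-kt\,\Im\psi}\le e^{-c'\,k\,\eta^2}\le e^{-c''\,k^{2/9}}$, since $t\ge1/(2C)$ on $\mathrm{supp}\,\gamma$ and $k\eta^2\ge\frac14\,k^{2/9}$ on the support; bounding $|s(kt,\cdot,\cdot)|\lesssim(kt)^{\mathrm d}\lesssim k^{\mathrm d}$, the prefactor $k$, and the (compact) domain of integration trivially gives $|S_e''(k)|\lesssim k^{\mathrm d+1}\,e^{-c''k^{2/9}}=O(k^{-\infty})$, as desired.

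To prove the geometric estimate I would first absorb $b_e$. As $e^{ib_e}\in T_m$ fixes $x$ and $\widetilde{\mu}$ acts on $X$ by isometries commuting with its own flow, $d_x\widetilde{\mu}_{e^{-ib_e}}$ is a linear isometry of $T_xX$ fixing $\xi_X(x)$, hence preserving $\xi_X(x)^\perp$; in Heisenberg coordinates it is $(\theta,\mathbf v)\mapsto(\theta,d_m\mu_{e^{-ib_e}}(\mathbf v))$ by Lemma \ref{lem:heis-tg}. Thus $\widetilde{\mu}_{-b_e}(x_{1k})=x+\upsilon_1'/\sqrt k+O(k^{-7/9})$ with $\upsilon_1'\in\xi_X(x)^\perp$, $\|\upsilon_1'\|=\|\upsilon_1\|$, and it remains to bound from below the distance between $\widetilde{\mu}_{-\eta}\big(x+\upsilon_1'/\sqrt k+O(k^{-7/9})\big)$ and $x+\upsilon_2/\sqrt k$ for $|\eta|<\epsilon$. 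Expanding $\widetilde{\mu}_{-\eta}$ in the chart $\gamma_x$ as in Lemma \ref{lem:more-refined-local-estimate} and Corollary \ref{cor:yet-more-refined-local-estimate} (now with $\eta$ merely small, not infinitesimal) gives $\widetilde{\mu}_{-\eta}(x)=x-\eta\,\xi_X(x)+O(\eta^2)$ and $d_x\widetilde{\mu}_{-\eta}=\mathrm{Id}+O(\eta)$, so in Heisenberg coordinates the vector joining $x_{2k}$ to $\widetilde{\mu}_{-\eta-b_e}(x_{1k})$ equals
\[
-\eta\,\xi_X(x)+\tfrac1{\sqrt k}\,(\upsilon_1'-\upsilon_2)+O\!\big(\eta^2+\eta\,k^{-7/18}+k^{-7/9}\big).
\]
The first two summands are orthogonal --- this is exactly the point at which the hypothesis $\upsilon_l\in\xi_X(x)^\perp$ enters --- so their sum has norm $\ge|\eta|\,\|\xi_X(x)\|\ge a\,|\eta|$, using $\|\xi_X(x)\|^2=\|\xi_M^\sharp(m)\|^2+\Phi(m)^2\ge\Phi(m)^2$. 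On $\mathrm{supp}(1-f_k)$ one has $k^{-7/18}\le2|\eta|$, so every error term above is $O(\eta^2)$; shrinking $\epsilon$ once and for all, the norm is $\ge\frac a2|\eta|$, which is the claim.

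The main obstacle is precisely this last uniform bound: one must check that all implied constants --- the parametrix constant $c_0$, the constants in the Taylor expansion of $\widetilde{\mu}_{-\eta}$ about $x$, and the smallness threshold $\epsilon$ --- can be chosen independently of $x\in X$, and that the lower-order errors are genuinely absorbed on the range $|\eta|\ge\frac12\,k^{-7/18}$ (this is where the exponent $7/18=\frac12-\frac19$ is forced: the displacement scale $k^{1/9}/\sqrt k=k^{-7/18}$ must match the cutoff scale). Both follow from compactness of $X$ and from $\Phi>0$, which makes $\xi_X$ nowhere zero and the orbit maps $\eta\mapsto\widetilde{\mu}_{-\eta}(x)$ uniformly non-degenerate with no fixed points. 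Everything else is routine majorization; in particular no integration by parts is required.
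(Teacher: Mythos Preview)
Your argument is correct, and the geometric distance estimate is derived essentially as in the paper (the paper's proof is terser, simply writing $\mathrm{dist}_X\big(\widetilde{\mu}_{-\vartheta}(x_{1k}),x_{2k}\big)\ge \tfrac12\sqrt{\eta^2\|\xi_X(x)\|^2+\tfrac1k\|\upsilon_1^{(e)}-\upsilon_2\|^2}\ge C|\eta|$ after invoking the analogue of your \eqref{eqn:tilde-mu-e}--\eqref{eqn:tilde-mu-e-1}).

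The genuine difference is in the final step. The paper does \emph{not} bound $|e^{ik\Psi^{(e)}_k}|$ directly; instead, having established that the two arguments of $\psi$ stay at distance $\ge C'k^{-7/18}$, it integrates by parts in $dt$, using $\partial_t\Psi^{(e)}_k=\psi$ together with a lower bound $|\psi|\gtrsim\mathrm{dist}^2\gtrsim k^{-7/9}$, so that each integration by parts gains a factor $(k|\psi|)^{-1}\lesssim k^{-2/9}$ (this is what the reference to Lemma~2.5 of \cite{p2} encodes). Your route---exploiting $\Im\psi\ge c_0\,\mathrm{dist}^2$ to get the brute exponential bound $|e^{ikt\psi}|\le e^{-c''k^{2/9}}$---is more elementary and self-contained, since it avoids the iterated integration by parts and the attendant bookkeeping of derivatives of the amplitude. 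The paper's approach, on the other hand, is the one that generalizes cleanly to phases that are merely nondegenerate rather than strictly damping, which is why it is the default FIO technique.
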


\begin{proof}
Let us set $\upsilon_j=:(\theta_j,\mathbf{v}_j)\in \xi_X(x)^\perp$.
Choosing as preferred local coordinates on $M$ centered at $m=\pi(x)$
those given by the exponential map, we have
$$
\mu_{-\mathbf{b}_e}(m+\mathbf{v})=m+\mathbf{v}^{(e)},
$$
where $\mathbf{v}^{(e)}=:d_m\mu_{-\mathbf{b}_e}(\mathbf{v})$.
By Lemma 2.8 of \cite{p1},
\begin{equation}
\label{eqn:tilde-mu-e}
\widetilde{\mu}_{-\mathbf{b}_e}\big(x+(\theta,\mathbf{v})\big)=
x+\left(\theta+O\left(\|\mathbf{v}\|^3\right),\mathbf{v}^{(e)}\right).
\end{equation}

Let us set $\upsilon_j=:(\theta_1,\mathbf{v}_j)\in \xi_X(x)^\perp$,
$\upsilon_j^{(e)}=:\left(\theta_1,\mathbf{v}_j^{(e)}\right)\in \xi_X(x)^\perp$. To first order in
$(\theta,\upsilon_j)$, we have
\begin{equation}
\label{eqn:tilde-mu-e-1}
\widetilde{\mu}_{-\vartheta}\big(x+\upsilon_1\big)\sim
x+\Big(-\eta\,\xi_X(x)+\upsilon_1^{(e)}\Big).
\end{equation}

Since $x_{jk}=x+\upsilon_j/\sqrt{k}$, and because
Heisenberg local coordinates are isometric at the origin,
$$
\mathrm{dist}_X\big(\widetilde{\mu}_{-\vartheta}(x_{1k}),x_{2k}\big)\ge \frac 12\,\sqrt{\eta^2\,\|\xi_X(x)\|^2+
\frac 1k\,\|\upsilon_1^{(e)}-\upsilon_2\|^2}\ge C\,|\eta|,
$$
for some $C>0$. In particular, where $1-f_k(\eta)\neq 0$ the latter distance is $\ge (C/2)\,k^{-7/18}$.
Now we need only apply integration by parts by $t$, as in the proof of Lemma 2.5 of \cite{p2}.
\end{proof}

Thus $S_e(k)\sim S_e(k)''$ as $k\rightarrow +\infty$. To evaluate the latter, let us perform the change of
variable of integration $\eta\rightsquigarrow \eta/\sqrt{k}$. Then, perhaps after changing the definition of $C$,
\begin{eqnarray}
\label{eqn:S-e-rescaled}
\lefteqn{S_e(k)\sim S_e''(k)=\frac{1}{2\pi}\,e^{-ikb_e}\,k^{1/2}}\\
&&\cdot \int_{1/2C}^{2C}\int_{-C\,k^{1/9}}^{Ck^{1/9}}e^{ik\Psi^{(e)}_k\big(x,t,\eta/\sqrt{k}\big)}\,s\big(k t,\widetilde{\mu}_{-\eta/\sqrt{k}-b_e}(x_{1k}),x_{2k}\big)\, f\left(\frac{\eta}{k^{1/9}}\right)\,
\gamma(t)\,\mathrm{d}t\,\mathrm{d}\eta.
\nonumber
\end{eqnarray}

In view of Corollary \ref{cor:yet-more-refined-local-estimate}, we have
\begin{eqnarray}
\label{eqn:action-expanded-1}
\lefteqn{\widetilde{\mu}_{-\eta/\sqrt{k}-b_e}(x_{1k})}\\
&=&x+\left( \frac{1}{\sqrt{k}}\,\big(\eta\,\Phi(m)+\theta_1\big)+\frac{1}{k}\,\omega_m\big(\eta\, \xi_M(m),\mathbf{v}_1^{(e)}\big)
+B_3\left(\frac{\eta}{\sqrt{k}},\frac{\mathbf{v}}{\sqrt{k}}\right),\right.
\nonumber\\
&&\left.+\frac{1}{\sqrt{k}}\,\left(\mathbf{v}_1^{(e)}-\eta\, \xi_M(m)\right)+B_2\left(\frac{\eta}{\sqrt{k}},
\frac{\mathbf{v}}{\sqrt{k}}\right)\right),\nonumber
\end{eqnarray}
where here and in the following $B_j$ denotes a smooth function, with appropriate codomain, vanishing to
$j$-th order at the origin in $\mathbb{R}\times \mathbb{C}^\mathrm{d}$.

Let us define
\begin{eqnarray}
\label{eqn:funzione-A}
\lefteqn{A_{ek}(\eta,\upsilon_1,\upsilon_2)}\\
&=:&\frac{1}{\sqrt{k}}\,\big(\eta\,\Phi(m)+\theta_1-\theta_2\big)+\frac{1}{k}\,\omega_m\left(\eta\, \xi_M(m),\mathbf{v}_1^{(e)}\right)
+B_3\left(\frac{\eta}{\sqrt{k}},\frac{\mathbf{v}}{\sqrt{k}}\right),\nonumber
\end{eqnarray}
where we set collectively $\mathbf{v}=(\mathbf{v}_1,\mathbf{v}_2)$.

By the discussion in \S 3 of \cite{sz} we deduce from (\ref{eqn:phase-dim-1-e})
and (\ref{eqn:action-expanded-1}):
\begin{eqnarray}
\label{eqn:psi-e-expanded}
\lefteqn{\Psi^{(e)}_k\big(x,t,\eta/\sqrt{k}\big)
=t\psi\big(\widetilde{\mu}_{-\eta/\sqrt{k}-b_e}(x_{1k}),x_{2k}\big)-\eta/\sqrt{k}}\\
&=&it\,\left[1-e^{i\,A_{ek}(\eta,\upsilon_1,\upsilon_2)}\right]
-\frac{it}{k}\psi_2\left(\mathbf{v}_1^{(e)}-\eta\, \xi_M(m),\mathbf{v}_2\right)\,e^{i\,A_{ek}(\eta,\upsilon_1,\upsilon_2)}
-\eta/\sqrt{k}\nonumber\\
&&+
it\,R_3^\psi\left(\frac{1}{\sqrt{k}}\,\left(\mathbf{v}_1^{(e)}-\eta\, \xi_M(m)\right),
\frac{1}{\sqrt{k}}\,\mathbf{v}_2\right)\,e^{i\,A_{ek}(\eta,\upsilon_1,\upsilon_2)},\nonumber
\end{eqnarray}
where $R_3^\psi$ vanishes to third order at the origin. Here,
$$
\psi_2(\mathbf{r},\mathbf{s})=:-i\,\omega_m(\mathbf{r},\mathbf{s})-\frac 12\,\|\mathbf{r}-\mathbf{s}\|^2
\,\,\,\,\,\,\,\,\,\,\left(\mathbf{r},\mathbf{s}\in \mathbb{C}^\mathrm{d}\right).
$$
Now
\begin{eqnarray*}
it\,\left[1-e^{i\,A_{ek}(\eta,\upsilon_1,\upsilon_2)}\right]
&=& \frac{t}{\sqrt{k}}\,\big(\eta\,\Phi(m)+\theta_1-\theta_2\big)+\frac tk\omega_m\left(\eta\,\xi_M(m),\mathbf{v}_1^{(e)}\right) \nonumber\\
&&+\frac{it}{2k}\,\big(\eta\,\Phi(m)+\theta_1-\theta_2\big)^2+B_3'\left(\frac{\eta}{\sqrt{k}},\frac{\mathbf{v}}{\sqrt{k}}\right)
\end{eqnarray*}

Therefore,
\begin{eqnarray}
\label{eqn:phase-1-rescaled-exp}
\lefteqn{ik\,\Psi^{(e)}_k\big(x,t,\eta/\sqrt{k}\big)=i\sqrt{k}\,\Big[t\,\big(\eta\,\Phi(m)+\theta_1-\theta_2\big)-\eta\Big]}\\
&&-\frac{t}{2}\,\big(\eta\,\Phi(m)+\theta_1-\theta_2\big)^2+it\,\omega_m\left(\eta\,\xi_M(m),\mathbf{v}_1^{(e)}\right)+t\,\psi_2\left(\mathbf{v}_1^{(e)}-\eta\, \xi_M(m),\mathbf{v}_2\right)\,e^{i\,A_{ek}(\eta,\upsilon_1,\upsilon_2)}\nonumber\\
&&+t\,k\,B_3''\left(\frac{\eta}{\sqrt{k}},\frac{\mathbf{v}}{\sqrt{k}}\right).\nonumber
\end{eqnarray}

Inserting (\ref{eqn:phase-1-rescaled-exp}) in (\ref{eqn:S-e-rescaled}), we obtain

\begin{eqnarray}
\label{eqn:S-e-rescaled-1}
\lefteqn{S_e(k)\sim \frac{1}{2\pi}\,e^{-ikb_e}\,k^{1/2}}\\
&&\cdot \int_{1/2C}^{2C}\int_{-C\,k^{1/9}}^{Ck^{1/9}}e^{i\sqrt{k}\,\Upsilon(t,\eta,\theta)}\,
e^{G(m,\theta,\mathbf{v},\upsilon)}\,S_k(m,\theta,\mathbf{v},\upsilon)\,\mathrm{d}t\,\mathrm{d}\eta,
\nonumber
\end{eqnarray}
where
\begin{equation}
\label{eqn:upsilon}
\Upsilon(t,\eta,\theta)=:t\,\big(\eta\,\Phi(m)+\theta_1-\theta_2\big)-\eta,
\end{equation}
\begin{eqnarray*}
\lefteqn{G(m,\eta,\mathbf{v},\upsilon)=:-t\,\big(\eta\,\Phi(m)+\theta_1-\theta_2\big)^2}\\
&&+it\,\omega_m\left(\eta\,\xi_M(m),\mathbf{v}_1^{(e)}\right)+t\,\psi_2\left(\mathbf{v}_1^{(e)}-\eta\, \xi_M(m),\mathbf{v}_2\right)\,e^{i\,A_{ek}(\eta,\upsilon_1,\upsilon_2)},
\end{eqnarray*}
and the remaining terms have been incorporated in the phase $S_k$;
notice that $\Re(G)\le -c\,\eta^2$ for some $c>0$.
The amplitude may be Taylor expanded in $\big(\eta/\sqrt{k},\upsilon/\sqrt{k}\big)$, and the error
at the $N$-th step is bounded by $C\,e^{-c\eta^2}\,k^{-aN}$ for some $a>0$. Thus we may integrate the
expansion term by term.

Now we interpret (\ref{eqn:S-e-rescaled-1}) as an oscillatory integral in $\sqrt{k}$, with real phase
$\Upsilon$. Since $\partial_t\Upsilon=\eta\,\Phi(m)+\theta_1-\theta_2$, an integration by parts in
$\mathrm{d}t$ shows that we only lose a rapidly decaying contribution if we multiply the integrand
in (\ref{eqn:S-e-rescaled-1}) by a compactly supported bump function identically $1$ on a neighborhood
of $(\theta_2-\theta_1)/\Phi(m)$.

We see from (\ref{eqn:upsilon}) that $\Upsilon$ has a unique critical point
in
$$
(t_0,\eta_0)=\big(1/\Phi(m),(\theta_2-\theta_1)/\Phi(m)\big),
$$ and that the Hessian
matrix there is
$$
H(\Upsilon)=\left(
              \begin{array}{cc}
                0 & \Phi(m) \\
                \Phi(m) & 0 \\
              \end{array}
            \right);
$$
the determinant being $-\Phi(m)^2<0$, the critical point is non-degenerate.
Hence
$$
\sqrt{\det \left(\frac{\sqrt{k}}{2\pi i}\cdot H(\Upsilon)\right)}=\frac{\sqrt{k}}{2\pi}\,\Phi(m).
$$

Applying the stationary phase Lemma, we obtain for (\ref{eqn:S-e-rescaled-1}) an asymptotic
expansion in descending powers of $k^{1/2}$, with leading order term given by
$$
e^{-ikb_e-i\sqrt{k}\, (\theta_2-\theta_1)/\Phi(m)}\,\Phi(m)^{-(\mathrm{d}+1)}\,\left(\frac k\pi\right)^{\mathrm{d}}\,e^{E(\upsilon_1^{(e)},\upsilon_2)},
$$
where (see (\ref{eqn:phase-1-rescaled-exp}))
\begin{eqnarray*}
\lefteqn{E(\upsilon_1^{(e)},\upsilon_2)=:}\\
&&\frac{1}{\Phi(m)}\left[i\,\omega_m\left(\frac{\theta_2-\theta_1}{\Phi(m)}\,\xi_M(m),\mathbf{v}_1^{(e)}\right)+
\psi_2\left(\mathbf{v}_1^{(e)}-\frac{\theta_2-\theta_1}{\Phi(m)}\, \xi_M(m),\mathbf{v}_2\right)\right]\\
&=&\frac{1}{\Phi(m)}\left\{i\,\left[\omega_m\left(\frac{\theta_2-\theta_1}{\Phi(m)}\,\xi_M(m),\mathbf{v}_1^{(e)}+\mathbf{v}_2\right)
-
\omega_m\left(\mathbf{v}_1^{(e)},\mathbf{v}_2\right)\right]\right.\\
&&\left.-\frac 12\,\left\|\left(\mathbf{v}_1^{(e)}-\mathbf{v}_2\right)
-\frac{\theta_2-\theta_1}{\Phi(m)}\, \xi_M(m)\right\|^2\right\}.
\end{eqnarray*}

Finally, to prove the second statement suppose $x_{1k},\,x_{2k}\in X$ in (\ref{eqn:fio-pi-rescaled-1-dim})
satisfy $\mathrm{dist}_X\big(\widetilde{\mu}_t(x_{1k}),x_{2k}\big)\ge C\,k^{\epsilon-1/2}$ $\forall\,t\in \mathbb{T}^1$.
Then, arguing similarly to the proof of Lemma 2.5 of \cite{p2}, one checks following \cite{bs} that
the phase in (\ref{eqn:phase-dim-1}) satisfies
$\partial_t\Psi_k\ge C'k^{2\epsilon-1}$, and the statement is proved integrating by parts in $dt$.

\end{proof}

\begin{rem}
Since the critical point of the phase $\Upsilon$ in (\ref{eqn:S-e-rescaled-1})
is non-degenerate, the asymptotic expansion derived in
Theorem \ref{thm:circle-case} may be locally smoothly
deformed with $x$ \cite{ms}. In general, however, as we move from a given reference point
$x$ to a nearby $x'$,
some of the critical points may cease to be real, since $T_m$ is only \lq upper semi-continuous\rq.
Thus the contribution to the asymptotics coming from, say, the $e$-th summand
in (\ref{eqn:variuospieces})
might be negligible at most $x'$ near $x$ if $e^{i\mathbf{b}_e}\not\in T_{x'}$ generically.
\end{rem}

\subsection{Proof of Corollary \ref{cor:dim-circle-case}.}
\begin{proof}
For $k=1,2,\ldots$, let us define
$f_k\in \mathcal{C}^\infty(M)$ by setting
$f_k(m)=:(\ell\,k)^{-\mathrm{d}}\,\widetilde{\Pi}_{\ell k}(x,x)$ if $m=\pi(x)$.
Given Theorem \ref{thm:circle-case} (or Corollary \ref{cor:point-wise-circle}),
$f_k\le C\,k^\mathrm{d}$ for some $C>0$ and
$f_k\rightarrow \left(\ell/\pi^\mathrm{d}\right)\,\Phi^{-(\mathrm{d}+1)}$ for $k\rightarrow +\infty$
pointwise on a dense open subset $M'\subseteq M$. By the dominated convergence Theorem,
$$
(\ell\,k)^{-\mathrm{d}}\,\dim\left(\widetilde{H}_{\ell k}(X)\right)=
\int_Mf_k\,dV_M\rightarrow \left(\ell/\pi^\mathrm{d}\right)\,\int_M\Phi^{-(\mathrm{d}+1)}
\,dV_M.
$$
On the other hand, $dV_M/\pi^{\mathrm{d}}=\big(1/\mathrm{d}!\big)\,\big(\omega/\pi\big)^{\mathrm{d}}
=\big(1/\mathrm{d}!\big)\cdot c_1(A)^\mathrm{d}$.
\end{proof}

\section{Proof of Theorem \ref{thm:higher-dim-case}}

\begin{proof}
We start from the relation
\begin{equation}
\label{eqn:equiv-proj-general-case}
\widetilde{\Pi}_{k\varpi}(x,x)=
\frac{1}{(2\pi)^{\mathrm{g}}}\,\int_{-\pi}^\pi\cdots\int_{-\pi}^\pi
e^{-ik\vartheta\cdot \varpi}\,\widetilde{\Pi}\left(\widetilde{\mu}_{-\vartheta}(x),x\right)\,d\vartheta,
\end{equation}
where $\vartheta=\big(\vartheta_1,\ldots,\vartheta_\mathrm{g}\big)\in (-\pi,\pi)^{\mathrm{g}}$,
$\vartheta\cdot \varpi=\sum_{j=1}^\mathrm{g}\vartheta_j\varpi_j$,
$\mathrm{d}\vartheta=\mathrm{d}\vartheta_1\,\cdots\,\mathrm{d}\vartheta_\mathrm{g}$.

For $\mathbf{a}=(a_1,\ldots,a_\mathrm{g})\in \mathbb{R}^\mathrm{g}$, we set $e^{i\mathbf{a}}=:\left(e^{ia_1},\ldots,e^{ia_\mathrm{g}}\right)$.
Suppose $T_m=\left\{e^{i\mathbf{b}_1},\ldots,e^{i\mathbf{b}_{r_m}}\right\}$ with
$\mathbf{b}_e=(b_{e1},\ldots,b_{e\mathrm{g}})\in (-\pi,\pi)^{\mathrm{g}}$ for
$1\le e\le r_m$. Choose $\epsilon>0$ sufficiently small, and let $\varrho\in \mathcal{C}^\infty(\mathbb{T})$
be a bump function supported where $\|\vartheta-\mathbf{b}_e\|\le \epsilon$ for some $e$, and equal to $1$
where  $\|\vartheta-\mathbf{b}_e\|\le \epsilon/2$ for some $e$. Thus $\varrho=\sum_{j=1}^\mathrm{g}\varrho_e$,
where the supports of the $\varrho_e$'s are at positive distance from each other, and each $\varrho_e$
is supported where  $\|\vartheta-\mathbf{b}_e\|\le \epsilon$. The argument in the proof of Lemma \ref{lem:1st-reduction}
implies
\begin{equation}
\label{eqn:equiv-proj-general-case-bump}
\widetilde{\Pi}_{k\varpi}(x,x)\sim
\frac{1}{(2\pi)^{\mathrm{g}}}\,\int_{-\pi}^\pi\cdots\int_{-\pi}^\pi
e^{-ik\vartheta\cdot \varpi}\,\varrho(\vartheta)\,
\widetilde{\Pi}\left(\widetilde{\mu}_{-\vartheta}(x),x\right)\,d\vartheta.
\end{equation}
Again, $\widetilde{\mu}_{-\vartheta}(x)$ is close to $x$ on the support of $\varrho$.
Therefore,  in (\ref{eqn:equiv-proj-general-case-bump}) we may replace $\Pi$ by its
representation as an FIO in (\ref{eqn:szego-as-FIO}), and the contribution of $S$ to the
asymptotics is $O\left(k^{-\infty}\right)$. Rescaling in $t$ as in (\ref{eqn:fio-pi-rescaled-1-dim})
and adapting the proof of Lemma \ref{lem:second-reduction-dim-1}, we obtain the analogue of
(\ref{eqn:variuospieces}):
\begin{eqnarray}
\label{eqn:variuospieces-general}
\lefteqn{\widetilde{\Pi}_{k\varpi}(x,x)}\\
&\sim&\sum_{e=1}^{r_m}\frac{k}{(2\pi)^{\mathrm{g}}}\cdot
\int_{1/2C}^{2C}\int_{-\pi}^\pi\cdots\int_{-\pi}^\pi e^{ik\Psi(x,t,\vartheta)}\,
s\left(k t,\widetilde{\mu}_{-\vartheta}(x),x\right)\,\varrho_e(\vartheta)\,dt\,d\vartheta,
\nonumber
\end{eqnarray}
where now
\begin{equation}
\label{eqn:defn-phase-general}
\Psi(x,t,\vartheta)=:t\,\psi\left(\widetilde{\mu}_{-\vartheta}(x),x\right)
-\vartheta\cdot \varpi.
\end{equation}

Let us estimate each summand asymptotically by the stationary phase Lemma. On $\mathrm{supp}(\varrho_e)$,
$\partial_t\Psi(x,t,\vartheta)=\psi\left(\widetilde{\mu}_{-\vartheta}(x),x\right)=0$ if and only if
$\vartheta=\mathbf{b}_e$. Let us set $\vartheta=\eta+\mathbf{b}_e$, $\eta\sim \mathbf{0}$;
then $\widetilde{\mu}_{-\vartheta}(x)=\widetilde{\mu}_{-\eta}(x)$, and
in view of (\ref{eqn:T-g-local-action}) we have
$\partial_\vartheta\Psi(x,t,\mathbf{b}_e)=t\,\Phi(m)-\varpi$.

Let us set $\lambda_\varpi(m)=:\|\varpi\|/\|\Phi(m)\|$. Then by the above the following alternative holds.
If $m\not\in M_\varpi=\Phi^{-1}(\mathbb{R}_+\cdot m)$, then $\Psi(x,t,\vartheta)$ has no critical point in
$(t,\vartheta)$ on the support of $\varrho_e$ for any $e$,
so $\widetilde{\Pi}_{k\varpi}(x,x)=O\left(k^{-\infty}\right)$. If on the other hand $m\in M_\varpi$ then
$(t_e,\vartheta_e)=:\big(\lambda_\varpi(m),\mathbf{b}_e\big)$ is the only stationary point of
$\Psi(m,\cdot,\cdot)$ in $\mathbb{R}_+\times \mathrm{supp}(\varrho_e)$.

Assuming $m\in M_\varpi$ and that $\Phi$ is transversal to $\mathbb{R}_+\cdot \varpi$,
let us compute the Hessian matrix of $\Psi$ at each critical point.
Clearly, $\partial^2_{tt}\Psi=0$ identically. With $\vartheta=\eta+\mathbf{b}_e$, $\eta\sim \mathbf{0}$
by Lemma \ref{lem:more-refined-local-estimate} we have
\begin{eqnarray}
\label{eqn:phase-approximate-general-1}
\lefteqn{\Psi(x,t,\vartheta)=\Psi(x,t,\eta+\mathbf{b}_e)=t\,\psi\left(\widetilde{\mu}_{-\eta}(x),x\right)
-\eta\cdot\varpi-\mathbf{b}_e\cdot \varpi}\\
&=& \Big[t\,\psi\Big(x+\left(\eta\cdot\Phi(m)+O\left(\|\eta\|^3\right),
-\eta\cdot\xi_M(m)+O\left(\|\eta\|^2\right)\right),x\Big)
-\eta\cdot \varpi\Big] -\mathbf{b}_e\cdot \varpi. \nonumber
\end{eqnarray}
In view of (\ref{eqn:T-g-local-action}), the first line of
(\ref{eqn:phase-approximate-general-1}) implies
\begin{equation}
\label{eqn:hessian-t-vartheta}
\left.\partial^2_{t\vartheta}\Psi\right|_{\vartheta=\mathbf{b_e}}=
\left.\partial^2_{t\vartheta}\Psi\right|_{\eta=\mathbf{0}}=
\left.\partial_{\eta}\psi\left(\widetilde{\mu}_{-\eta}(x),x\right)\right|_{\eta=\mathbf{0}}=\Phi(m).
\end{equation}
In view of the discussion in \S 3 of \cite{sz}, the second line of (\ref{eqn:phase-approximate-general-1})
may be rewritten
\begin{eqnarray}
\label{eqn:phase-expand-general-2}
\lefteqn{\Psi(x,t,\eta+\mathbf{b}_e)}\\
&=&-\mathbf{b}_e\cdot \varpi+i t\,\Big\{\left[1-e^{i\eta\cdot\Phi(m)}\right]+\frac{1}{2}\,\|\eta\cdot \xi_M(m)\|^2
e^{i\eta\cdot\Phi(m)}+t\cdot O\left(\|\eta\|^3\right)
\Big\}-\eta\cdot \varpi
 . \nonumber
\end{eqnarray}
We obtain from (\ref{eqn:phase-expand-general-2}):
\begin{equation}
\label{eqn:phase-expand-general-3}
\left.\partial^2_{\vartheta_j\vartheta_k}\Psi\right|_{\vartheta=\mathbf{b_e},t=\lambda_\varpi(m)}=
i\,\lambda_\varpi(m)\,\Big[\Phi_j(m)\,\Phi_k(m)+\langle \xi_j,\xi_k\rangle_m\Big].
\end{equation}
Thus the Hessian of $\Psi$ at the critical point is
$$
H(\Psi)=:\left(
           \begin{array}{cc}
             0 & \Phi(m)^t \\
             \Phi(m) & i\,\lambda_\varpi(m)\,\Big[\Phi_j(m)\,\Phi_k(m)+\langle \xi_j,\xi_k\rangle_m\Big] \\
           \end{array}
         \right).
$$

This is the Hessian matrix at each critical point for every $m\in M_\varpi$; it is a smooth matrix valued
function on $M_\varpi$. To compute its determinant, let us remark that
$C(m)=:\big[\langle \xi_j,\xi_k\rangle_m\big]$ is positive semi-definite, and the scalar product it
defines on $\mathfrak{t}$
is positive definite on $\ker \big(\Phi(m)\big)$, by Corollary \ref{cor:transverse-conic-mm}.
Let $D$ be the smooth (symmetric, positive definite) matrix valued function defined in the discussion preceding
the statement of Theorem \ref{thm:higher-dim-case}.
Performing row operations and
applying Lemma \ref{lem:key-det-3} we get
\begin{eqnarray}
\label{eqn:phase-expand-general-4}
\lefteqn{\det \big(H(\Psi)\big)=\left|
           \begin{array}{cc}
             0 & \Phi(m)^t \\
             \Phi(m) & i\,\lambda_\varpi(m)\,C(m) \\
           \end{array}
         \right|}\\
         &=&\big(i\lambda_\varpi(m)\big)^{\mathrm{g}-1}\left|
           \begin{array}{cc}
             0 & \Phi(m)^t \\
             \Phi(m) & C(m) \\
           \end{array}
         \right|=i^{\mathrm{g}+1}\,\lambda_\varpi(m)^{\mathrm{g}-1}\,\|\Phi(m)\|^2\cdot \det\big(D(m)\big).
         \nonumber
\end{eqnarray}
Hence,
\begin{eqnarray}
\label{eqn:critical-hessian-det-0}
\det \left(\frac{k}{2\pi i}\,H(\Psi)\right)&=&
\left(\frac{k}{2\pi}\right)^{\mathrm{g}+1}\,
\lambda_\varpi(m)^{\mathrm{g}-1}\cdot\|\Phi(m)\|^2\,\det \left(D(m)\right)\nonumber\\
&=&\left(\frac{k}{2\pi}\right)^{\mathrm{g}+1}\,
\lambda_\varpi(m)^{\mathrm{g}-1}\cdot \|\Phi(m)\|^2\,\mathcal{D}(m)^2;
\end{eqnarray}
Applying the stationary phase Lemma, we end up with an asymptotic expansion in descending powers of
$k$ for the $e$-th summand in (\ref{eqn:variuospieces-general}), with leading term
\begin{eqnarray}
\label{eqn:asympt-expansion-varpi}
\lefteqn{e^{-ik\mathbf{b}_e\cdot\varpi}
\left(\sqrt{2}\pi\right)^{-(\mathrm{g}-1)}}\\
&&\cdot\left(\|\varpi\|\cdot\frac{k}{\pi}\right)^{\mathrm{d}+(1-\mathrm{g})/2}\,
\frac{1}{\mathcal{D}(m)}\,\left(\frac{1}{\|\Phi(m)\|}\right)^{\mathrm{d}+1+(1-\mathrm{g})/2}.
\nonumber
\end{eqnarray}
\end{proof}

As mentioned in the introduction, if $M_\varpi'\neq \emptyset$
there we can give a different description of the leading term.
In the present situation Lemma \ref{lem:key-det-4} may be restated as follows:
\begin{lem}
\label{lem:G-m-det}
For any $m\in M'_\varpi$, we have
$$
\|\Phi(m)\|^2_m\cdot \det\big(C(m)\big)=\|\Phi(m)\|^2\cdot \det\big(D(m)\big).
$$
In particular,
$G(m)=:\|\Phi(m)\|^2_m\cdot \det\big(C(m)\big)$ extends to a smooth function on $M_\varpi$,
uniformly bounded away from $0$.
\end{lem}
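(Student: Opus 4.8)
The plan is to recognize Lemma~\ref{lem:G-m-det} as nothing more than the instance of Lemma~\ref{lem:key-det-4} obtained by taking $V=\mathfrak{t}$, equipped with the two Euclidean structures $\Psi$ --- the one for which the basis $(\xi_1,\ldots,\xi_\mathrm{g})$ is orthonormal --- and $\Omega=\langle\,,\,\rangle_m$, together with the covector $\phi=\Phi(m)\in\mathfrak{t}^*$ (which is nonzero since $\mathbf{0}\notin\Phi(M)$). First I would observe that this application is legitimate precisely because $m\in M'_\varpi$: on $M'$ the evaluation map $\mathrm{val}_m$ is injective, hence $\Omega=\langle\,,\,\rangle_m$ is positive definite, which is the hypothesis under which Lemma~\ref{lem:key-det-4} was stated. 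Then I would spell out the dictionary of notation: $\det{}\!_\Psi(\Omega)=\det\big(C(m)\big)$, because $C(m)=\big[\langle\xi_j,\xi_k\rangle_m\big]$ is by construction the Gram matrix of $\Omega$ in the $\Psi$-orthonormal basis $(\xi_j)$; $\|\phi\|_\Omega=\|\Phi(m)\|_m$ and $\|\phi\|_\Psi=\|\Phi(m)\|$ by the definitions of the dual norms on $\mathfrak{t}^*$; and $\det{}\!_\Psi(\Omega_\phi)=\det\big(D(m)\big)$.

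The only point in this translation that is not purely formal, and the step I would treat with some care, is the identification $\det{}\!_\Psi(\Omega_\phi)=\det\big(D(m)\big)$. By definition $D(m)$ represents, in a $\Psi$-orthonormal basis of $\ker\Phi(m)$, the Euclidean product on $\ker\Phi(m)$ pulled back from $(T_mM,g_m)$ under the isomorphism $N_m\cong\ker\Phi(m)$; by Lemma~\ref{lem:N-ker-Phi-transverse} this isomorphism is $J_m\circ\mathrm{val}_m$, and since $J_m$ is a $g_m$-isometry, the pulled-back product coincides with the pull-back of $g_m$ under $\mathrm{val}_m$ alone, i.e.\ with the restriction $\Omega_\phi=\langle\,,\,\rangle_m|_{\ker\Phi(m)}$. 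With that in hand, Lemma~\ref{lem:key-det-4} gives exactly $\|\Phi(m)\|_m^2\,\det\big(C(m)\big)=\|\Phi(m)\|^2\,\det\big(D(m)\big)$.

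For the final assertion, I would substitute $\det\big(D(m)\big)=\mathcal{D}(m)^2$ to obtain $G(m)=\|\Phi(m)\|^2\,\mathcal{D}(m)^2$ for every $m\in M'_\varpi$, and then observe that the right-hand side is defined, smooth, and strictly positive on all of $M_\varpi$: indeed $\mathcal{D}\in\mathcal{C}^\infty(M_\varpi)$ with $\mathcal{D}>0$ by Definition~\ref{defn:calD}, while $\Phi$ is smooth and nowhere vanishing on $M_\varpi$ because $\mathbf{0}\notin\Phi(M)$. Since $M_\varpi$ is compact (it is closed in the compact manifold $M$, as in the proof of Lemma~\ref{lem:connected}) this positive continuous function is bounded away from $0$, and since $M'_\varpi$ is dense in $M_\varpi$ it is the unique continuous --- hence the asserted smooth --- extension of $G$. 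I do not anticipate a genuine difficulty anywhere: the substance of the argument is the bookkeeping of the Euclidean and dual structures on $\mathfrak{t}$ and $\mathfrak{t}^*$ together with the $J_m$ appearing in Lemma~\ref{lem:N-ker-Phi-transverse}, while the remaining ingredients (positivity of $\mathcal{D}$, compactness of $M_\varpi$, density of $M'_\varpi$) are already recorded above.
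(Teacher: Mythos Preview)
Your proposal is correct and follows exactly the approach the paper intends: the paper's own ``proof'' consists of the single remark that Lemma~\ref{lem:key-det-4} may be restated in the present situation, and you have spelled out precisely that restatement (including the one nontrivial identification $\det{}\!_\Psi(\Omega_\phi)=\det D(m)$ via Lemma~\ref{lem:N-ker-Phi-transverse} and the orthogonality of $J_m$). Your treatment of the ``In particular'' clause---rewriting $G(m)=\|\Phi(m)\|^2\,\mathcal{D}(m)^2$ and invoking compactness of $M_\varpi$ together with positivity of $\mathcal{D}$---is more explicit than the paper, which leaves this to the reader.
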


Hence if $m\in M'_\varpi$ we can rewrite (\ref{eqn:critical-hessian-det-0}) as
\begin{eqnarray}
\label{eqn:critical-hessian-det}
\det \left(\frac{k}{2\pi i}\,H(\Psi)\right)=
\left(\frac{k}{2\pi}\right)^{\mathrm{g}+1}\,
\lambda_\varpi(m)^{\mathrm{g}-1}\cdot\|\Phi(m)\|_m^2\,\det\big(C(m)\big).
\end{eqnarray}

\begin{lem}
\label{lem:eff-vol}
Given that $m\in M'$,
$\det\big(C(m)\big)^{1/2}=(2\pi)^{-\mathrm{g}}\,|T_m|\cdot V_{\mathrm{eff}}(m)$.
\end{lem}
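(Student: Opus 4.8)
The plan is to read $\sqrt{\det C(m)}$ as the Riemannian volume of a fundamental domain for the integral lattice of $\mathbb{T}^\mathrm{g}$ in $\mathfrak{t}$, and to compare this with the volume of the orbit $\mathbb{T}\cdot m$ through the orbit map, whose fibres are the cosets of $T_m$.

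First I would use that $\mu$ is holomorphic and Hamiltonian, so that $\mathbb{T}^\mathrm{g}$ preserves both $\omega$ and $J$, hence acts by isometries of the K\"ahler metric $g$. It follows that the orbit map $q_m\colon\mathbb{T}^\mathrm{g}\to M$, $t\mapsto\mu_t(m)$, satisfies $q_m\circ L_s=\mu_s\circ q_m$ for every $s\in\mathbb{T}^\mathrm{g}$ ($L_s$ denoting left translation), and therefore the pulled-back tensor $q_m^*g$ is a left-invariant symmetric $2$-tensor on $\mathbb{T}^\mathrm{g}$. Since $m\in M'$, $\mathrm{val}_m$ is injective, so $q_m$ is an immersion and $q_m^*g$ is a genuine left-invariant Riemannian metric; at the identity its value is $(q_m^*g)_e(\xi,\eta)=g_m\big(\xi_M(m),\eta_M(m)\big)=\langle\xi,\eta\rangle_m$, i.e. it is the left-invariant metric determined by $\langle\,,\,\rangle_m$ on $\mathfrak{t}$.

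Next I would compute the Riemannian volume of $(\mathbb{T}^\mathrm{g},q_m^*g)$ in two ways. Writing $\mathbb{T}^\mathrm{g}=\mathfrak{t}/\Lambda$ with $\Lambda=\ker\big(\exp\colon\mathfrak{t}\to\mathbb{T}^\mathrm{g}\big)$, this volume equals the covolume of $\Lambda$ measured by $\langle\,,\,\rangle_m$. In the coordinates $\vartheta=(\vartheta_1,\dots,\vartheta_\mathrm{g})$ with $\vartheta_j\in(-\pi,\pi)$ and $\xi_j=\partial/\partial\vartheta_j|_{\mathbf 0}$, one has $\mathbb{T}^\mathrm{g}=(\mathbb{R}/2\pi\mathbb{Z})^\mathrm{g}$, so $\Lambda$ is generated by $2\pi\xi_1,\dots,2\pi\xi_\mathrm{g}$; the Gram matrix of these generators is $(2\pi)^2\,C(m)$, whence the covolume is $(2\pi)^\mathrm{g}\,\sqrt{\det C(m)}$. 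On the other hand, the stabiliser of $m$ under $\mu$ is $T_m$, so $q_m$ descends to a diffeomorphism $\mathbb{T}^\mathrm{g}/T_m\cong\mathbb{T}\cdot m$, preceded by the $|T_m|$-sheeted covering $\mathbb{T}^\mathrm{g}\to\mathbb{T}^\mathrm{g}/T_m$; as $q_m$ is a local isometry onto $\mathbb{T}\cdot m$, the volume of $(\mathbb{T}^\mathrm{g},q_m^*g)$ equals $|T_m|$ times the volume of the orbit, i.e. $|T_m|\,V_{\mathrm{eff}}(m)$. Equating the two expressions yields $(2\pi)^\mathrm{g}\,\sqrt{\det C(m)}=|T_m|\,V_{\mathrm{eff}}(m)$, which is the assertion.

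The only steps requiring a little care are the normalisation of the integral lattice --- the factor $(2\pi)^\mathrm{g}$ is forced by the convention $\vartheta_j\in(-\pi,\pi)$, equivalently $\mathbb{T}^\mathrm{g}=(\mathbb{R}/2\pi\mathbb{Z})^\mathrm{g}$ --- and the identification of $q_m$ as a covering of degree exactly $|T_m|$ onto $\mathbb{T}\cdot m$, which is immediate since the action is locally free on $M'$. There is no genuine obstacle here: the lemma is in essence a volume-bookkeeping identity.
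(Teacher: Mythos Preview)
Your proof is correct and follows essentially the same approach as the paper: compute the volume of $\mathbb{T}^\mathrm{g}$ with respect to the pulled-back metric $q_m^*g$ in two ways, once directly (the paper integrates $\sqrt{\det C(m)}\,d\vartheta$ over $(-\pi,\pi)^\mathrm{g}$, you compute the covolume of the lattice $2\pi\mathbb{Z}^\mathrm{g}$) and once via the $|T_m|$-sheeted covering of the orbit. Your presentation is if anything slightly more explicit about why $q_m^*g$ is left-invariant.
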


\begin{proof}
Let $\Upsilon_m:\mathbb{T}\rightarrow M$ be the $\mathcal{C}^\infty$ map $m\mapsto \mu_g(m)$.
Then $C(m)$ represents
the Riemannian metric on $\mathbb{T}$ given by the pull-back under $\Upsilon_m$
of the Riemannian metric on the orbit, in the trivialization of the tangent bundle
$T\mathbb{T}$
given by $\big(\partial/\partial\vartheta_j\big)$. The volume density of the
pulled-back metric in the standard angular coordinates is therefore
$\det\big(C(m)\big)^{1/2}\,d\vartheta$.  Noting that $C(m)$ is constant
along the orbit,  the volume of $\mathbb{T}$ for the
associated Riemannian
density is
$$
\int_{-\pi}^\pi\cdots\int_{-\pi}^\pi\det\big(C(m)\big)^{1/2}\,d\vartheta=(2\pi)^{\mathrm{g}}\,
\det\big(C(m)\big)^{1/2}.
$$
On the other hand, $\Upsilon_m$ is a $|T_m|$-to-$1$ covering of the orbit through $m$, and
therefore the latter volume equals $|T_m|\cdot V_{\mathrm{eff}}(m)$.
\end{proof}

Therefore, for $m\in M'_\varpi$
the leading term (\ref{eqn:asympt-expansion-varpi}) may rewritten:
\begin{eqnarray*}
e^{-ik\mathbf{b}_e\cdot\varpi}
\left(\|\varpi\|\cdot\frac{k}{\pi}\right)^{\mathrm{d}+(1-\mathrm{g})/2}\,
\frac{2^{(\mathrm{g}+1)/2}\pi}{|T_m|\,V_{\mathrm{eff}}(m)\|\Phi(m)\|_m}\,
\left(\frac{1}{\|\Phi(m)\|}\right)^\mathrm{\mathrm{d}+(1-\mathrm{g})/2}.
\end{eqnarray*}

\section{Proof of Theorem \ref{thm:scaling-limit-general}}

\begin{proof}
To begin with, suppose by contradiction that
$x_k,y_k\in X$ are such that
$$
\max\big\{\mathrm{dist}_X\big(\mathbb{T}\cdot x_k,\mathbb{T}\cdot y_k),
\mathrm{dist}_X\big(\mathbb{T}\cdot x_k,X_\varpi)\big\}\ge C_1\,k^{\epsilon_1-1/2},
$$
but
$\widetilde{\Pi}_{k\varpi}(x_k,y_k)\ge C_1\,k^{-N}$ for some $C_1,\,N,\,\epsilon_1>0$.
After passing to a subsequence, we may assume that either
$\mathrm{dist}_X\big(\mathbb{T}\cdot x_k,\mathbb{T}\cdot y_k)\ge C_1\,k^{\epsilon_1-1/2}$,
or $\mathrm{dist}_X\big(\mathbb{T}\cdot x_k,X_\varpi)\ge C_1\,k^{\epsilon_1-1/2}$.

In the first case, there are two possibilities.
If $\mathrm{dist}_X\big(\mathbb{T}\cdot x_k,\mathbb{T}\cdot y_k)\ge \delta$ for some fixed
$\delta>0$, an argument along the lines of the proof of Lemma \ref{lem:1st-reduction}
shows that $\widetilde{\Pi}_{k\varpi}(x_k,y_k)=O\left(k^{-\infty}\right)$.

Suppose then $\mathrm{dist}_X\big(\mathbb{T}\cdot x_k,\mathbb{T}\cdot y_k)\rightarrow 0$;
we may then arrange that $\mathrm{dist}_X(x_k,y_k)\rightarrow 0$,
and perhaps after passing to a subsequence we may assume $x_k,y_k\rightarrow x$ for some
$x\in X$.
We may then estimate $\widetilde{\Pi}_{k\varpi}(x_k,y_k)$ by means of
(\ref{eqn:variuospieces-general}) and (\ref{eqn:defn-phase-general}),
with $(x_k,y_k)$ in place of $(x,x)$.
The argument at the end of the proof of Theorem \ref{thm:circle-case},
based on integration by parts in $dt$,
again shows that $\widetilde{\Pi}_{k\varpi}(x_k,y_k)=O\left(k^{-\infty}\right)$.
Thus we reach a contradiction.

Suppose next that $x_k\in X$ is a sequence with
$\mathrm{dist}_X(x_k,X_\varpi)>C_1\,k^{\epsilon_1-1/2}$,
$\widetilde{\Pi}_{k\varpi}(x_k,x_k)\ge C_1\,k^{-N}$.
After passing to a subsequence, we may assume $x_k\rightarrow x_\infty$ for
$k\rightarrow +\infty$. If $x_\infty\not\in X_\varpi$, then
$\widetilde{\Pi}_{k\varpi}(x_k,x_k)=O\left(k^{-\infty}\right)$ by Theorem \ref{thm:higher-dim-case},
absurd. Thus $x_\infty\in X_\varpi$. Let us set $m_k=:\pi(x_k)$,
$m_\infty=:\pi(x_\infty)$. Then
$\mathrm{dist}_M(m_k,M_\varpi)>C_1\,k^{\epsilon_1-1/2}$, $m_\infty\in M_\varpi$
and $m_k\rightarrow m_\infty$.

We may estimate asymptotically $\widetilde{\Pi}_{k\varpi}(x_k,x_k)$
using (\ref{eqn:variuospieces-general}) in the neighborhood of $x_\infty$,
with $(x,x)$ replaced by $(x_k,x_k)$. Thus $\widetilde{\Pi}_{k\varpi}(x_k,x_k)$
is given by an oscillatory integral with the phase
\begin{equation}
\label{eqn:defn-phase-general-k}
\Psi_k(t,\vartheta)=:\Psi(x_k,t,\vartheta)=:t\,\psi\left(\widetilde{\mu}_{-\vartheta}(x_k),x_k\right)
-\vartheta\cdot \varpi.
\end{equation}

Fix $\epsilon'\in (1/2-\epsilon_1,1/2)$
and a bump function $f\in \mathcal{C}^\infty_0\left(\mathbb{R}^{2\mathrm{d}+1}\right)$, identically $=1$
on a neighborhood of the origin;
an adaptation of the proof of Lemma \ref{lem:S-k''} shows that only a rapidly
decaying contribution is lost as $k\rightarrow +\infty$ by inserting in the amplitude of (\ref{eqn:variuospieces-general})
a cut-off function
of the form $f\Big (k^{\epsilon'}\,\big(x_k-\widetilde{\mu}_{-\vartheta}(x_k)\big)\Big)$ (expressed in any
given coordinate chart centered at $x_\infty$).

We then have:
\begin{eqnarray}
\label{eqn:partial-vartheta-k}
\partial_\vartheta\Psi_k(t,\vartheta)&=&t\,\partial_\vartheta\psi\left(\widetilde{\mu}_{-\vartheta}(x_k),x_k\right)
-\varpi\\
&=&t\,\Phi(m_k)-\varpi+O\left(k^{-\epsilon'}\right)=t\,\Phi(m_k)-\varpi+o\left(k^{\epsilon_1-1/2}\right).
\nonumber
\end{eqnarray}

We may find an open neighborhood $V$ of $m_\infty$ in $M_\varpi$, and a smoothly varying
system of preferred local coordinates (see \S \ref{subsect:heis}) centered at points $m\in V$,
such that $m+\mathbf{t}\in M_\varpi$ if $\mathbf{t}\in T_mM_\varpi$.
At each $m\in M_\varpi$, the corresponding preferred local chart determines
isomorphisms of Euclidean vector spaces $N_m\cong \mathbb{R}^{\mathrm{g}-1}$
($N_{m}$ is the normal space to $M_\varpi$
at $m$), $T_mM_\varpi\cong \mathbb{R}^{2\mathrm{d}+1-\mathrm{g}}$,
and
$$
\mathbb{R}^{2\mathrm{d}}\cong T_mM\cong T_mM_\varpi\oplus N_m\cong \mathbb{R}^{2\mathrm{d}+1-\mathrm{g}}\oplus \mathbb{R}^{\mathrm{g}-1}.
$$

\begin{lem}
\label{lem:t-k-n-k}
If $\mathbf{t}\in \mathbb{R}^{2\mathrm{d}+1-\mathrm{g}}$ and $\mathbf{n}\in \mathbb{R}^{\mathrm{g}-1}$
are sufficiently small, then
$$
m_\infty+(\mathbf{t}+\mathbf{n})=(m_\infty+\mathbf{t})+\mathbf{n}+O\big(\|\mathbf{t}\|\cdot \|\mathbf{n}\|).
$$
\end{lem}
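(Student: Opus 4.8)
The plan is to reduce the assertion to the two special cases $\mathbf{n}=\mathbf{0}$ and $\mathbf{t}=\mathbf{0}$, in each of which the identity holds \emph{exactly}, and then to extract the bilinear error bound by an elementary Hadamard-type argument.

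First I would unwind the notation. Since $\mathbf{t}\in T_{m_\infty}M_\varpi$ and the family of preferred charts was chosen so that $m+T_mM_\varpi\subseteq M_\varpi$, the point $m_\infty+\mathbf{t}$ lies in $M_\varpi$, so the preferred chart centered at $m_\infty+\mathbf{t}$ is one of the charts in the smoothly varying family, and $(m_\infty+\mathbf{t})+\mathbf{n}$ is a well-defined point of $M$ for $\mathbf{n}$ small. Expressing this point in the preferred chart centered at $m_\infty$ yields a smooth map $\Phi(\mathbf{t},\mathbf{n})$ of $(\mathbf{t},\mathbf{n})$ near the origin of $\mathbb{R}^{2\mathrm{d}+1-\mathrm{g}}\times\mathbb{R}^{\mathrm{g}-1}$, valued in $\mathbb{R}^{2\mathrm{d}}$, and the claim is precisely that $\Phi(\mathbf{t},\mathbf{n})=\mathbf{t}+\mathbf{n}+O(\|\mathbf{t}\|\cdot\|\mathbf{n}\|)$. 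Put $G(\mathbf{t},\mathbf{n})=:\Phi(\mathbf{t},\mathbf{n})-\mathbf{t}-\mathbf{n}$. For $\mathbf{n}=\mathbf{0}$ we have $(m_\infty+\mathbf{t})+\mathbf{0}=m_\infty+\mathbf{t}$, whose expression in the chart at $m_\infty$ is by definition $\mathbf{t}$, so $G(\mathbf{t},\mathbf{0})=\mathbf{0}$; for $\mathbf{t}=\mathbf{0}$ we have $(m_\infty+\mathbf{0})+\mathbf{n}=m_\infty+\mathbf{n}$, which in the chart at $m_\infty$ is $\mathbf{n}$, so $G(\mathbf{0},\mathbf{n})=\mathbf{0}$.

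It then remains to prove the purely calculus statement: if $G$ is smooth near the origin with $G(\cdot,\mathbf{0})\equiv\mathbf{0}$ and $G(\mathbf{0},\cdot)\equiv\mathbf{0}$, then $G(\mathbf{t},\mathbf{n})=O(\|\mathbf{t}\|\cdot\|\mathbf{n}\|)$. Since $G(\mathbf{t},\mathbf{0})=\mathbf{0}$, Hadamard's lemma in the $\mathbf{n}$ variables gives $G(\mathbf{t},\mathbf{n})=\sum_i n_i\,H_i(\mathbf{t},\mathbf{n})$ with $H_i(\mathbf{t},\mathbf{n})=\int_0^1\partial_{n_i}G(\mathbf{t},s\mathbf{n})\,ds$ smooth. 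As $G(\mathbf{0},\cdot)$ vanishes identically, so do its $\mathbf{n}$-derivatives along $\mathbf{t}=\mathbf{0}$, whence $H_i(\mathbf{0},\mathbf{n})=\mathbf{0}$; a second application of Hadamard's lemma, now in $\mathbf{t}$, yields $\|H_i(\mathbf{t},\mathbf{n})\|\le C\|\mathbf{t}\|$ on a neighborhood of the origin. Combining, $\|G(\mathbf{t},\mathbf{n})\|\le C\,\|\mathbf{t}\|\sum_i|n_i|\le C'\,\|\mathbf{t}\|\cdot\|\mathbf{n}\|$. Since all the data depend smoothly on the base point, the same argument furnishes the estimate uniformly for base points in a compact neighborhood of $m_\infty$ in $M_\varpi$.

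I expect the only delicate point to be the bookkeeping of the first step: one must check that the preferred chart at $m_\infty$, the chart at $m_\infty+\mathbf{t}$, and the splittings $T_mM\cong T_mM_\varpi\oplus N_m$ are set up consistently, so that the two boundary identities $G(\mathbf{t},\mathbf{0})=\mathbf{0}$ and $G(\mathbf{0},\mathbf{n})=\mathbf{0}$ are genuinely exact. This is exactly what improves on the naive bound $G(\mathbf{t},\mathbf{n})=O(\|(\mathbf{t},\mathbf{n})\|^2)$ coming from the general comparison of nearby preferred charts (cf.\ the proof of Lemma~\ref{lem:comparison-heisenberg-local}), which would still carry unwanted pure $\|\mathbf{t}\|^2$ and $\|\mathbf{n}\|^2$ terms; once the two exact boundary identities are in hand, the bilinear bound follows formally.
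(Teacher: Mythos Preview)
Your argument is correct and is essentially the same as the paper's: the paper's entire proof is the single observation that the identity holds exactly when either $\mathbf{t}=\mathbf{0}$ or $\mathbf{n}=\mathbf{0}$, leaving the Hadamard-type step implicit, while you spell it out. There is no substantive difference in approach.
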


\begin{proof}
$m_\infty+(\mathbf{t}+\mathbf{n})=(m_\infty+\mathbf{t})+\mathbf{n}$
when either $\mathbf{t}=\mathbf{0}$ or $\mathbf{n}=\mathbf{0}$.
\end{proof}

In preferred local coordinates centered at $m_\infty$, $m_k=m_\infty+\mathbf{v}_k$ for a
unique $\mathbf{v}_k\in \mathbb{R}^{2\mathrm{d}}$.
We can uniquely write $\mathbf{v}_k=\mathbf{t}_k+\mathbf{n}_k$,
where $\mathbf{t}_k\in \mathbb{R}^{2\mathrm{d}+1-\mathrm{g}}$, $\mathbf{n}_k\in \mathbb{R}^{\mathrm{g}-1}$.
By Lemma \ref{lem:t-k-n-k}, since $\mathbf{t}_k\rightarrow \mathbf{0}$
we then have $m_k=m_k'+\mathbf{n}_k+o(\|\mathbf{n}_k\|)$, where $m_k'=:m_\infty+\mathbf{t}_k\in M_\varpi$.
Furthermore, since preferred local coordinates are isometric at the origin, for $k\gg 0$ we have
\begin{equation}
\label{eqn:bound-on-n-k}
\|\mathbf{n}_k\|\ge \frac 12\,\mathrm{dist}_M(m_k,m_k')\ge \frac 13\,\mathrm{dist}_M(m_k,M_\varpi)
\ge \frac 13\,C_1\,k^{\epsilon_1-1/2}.
\end{equation}
Given any $\beta\in \mathfrak{t}^*$, let us write $\beta=\beta^\parallel+\beta^\perp$, where
$ \beta^\parallel\in \mathrm{span}(\varpi)$, $\beta^\perp\in \mathrm{span}(\varpi)^\perp$.
Then
\begin{eqnarray}
\label{eqn:phi-partial-vartheta-k}
t\Phi(m_k)-\varpi&=&t\,\Phi\big(m_k'+\mathbf{n}_k\big)-\varpi+o(\|\mathbf{n}_k)\\
&=&\Big[t\,\left(\Phi(m_k)'+d_{m_k'}\Phi(\mathbf{n}_k)^\parallel\right)-\varpi\Big]
+t\,d_{m_k'}\Phi(\mathbf{n}_k)^\perp+o(\|\mathbf{n}_k\|).\nonumber
\end{eqnarray}

Now the first summand on the second line of (\ref{eqn:phi-partial-vartheta-k}) is in $\mathrm{span}(\varpi)$,
therefore
\begin{equation}
\label{eqn:norm-of-difference}
\big\|t\Phi(m_k)-\varpi\big\|\ge t\,\left\|d_{m_k'}\Phi(\mathbf{n}_k)^\perp\right\|+o(\|\mathbf{n}_k\|).
\end{equation}

With the unitary identification $\mathbb{R}^{\mathrm{g}-1}\cong N_{m_k'}$,
we have $\mathbf{n}_k=J_{m_k'}\big(\eta_{kM}(m_k')\big)$ for a unique $\eta_k\in \ker\big(\Phi(m_k')\big)$
(Lemma \ref{lem:N-ker-Phi-transverse}), and
$\|\eta_k\|>D\,k^{\epsilon_1-1/2}$ for some $D>0$.
Again by Lemma \ref{lem:N-ker-Phi-transverse}, for any $m\in M_\varpi$ and $\eta\in \ker\big(\Phi(m)\big)\setminus \{0\}$
we have
$d_m\Phi\circ J_{m}\big(\eta_{M}(m)\big)\not\in \mathrm{span}\big(\Phi(m)\big)$;
hence, there exists $C_2>0$ such that
\begin{equation}
\label{eqn:bound-phi-perp}
\Big\|d_m\Phi\circ J_{m}\big(\eta_{M}(m)\big)^\perp\Big\|\ge C_2\,\|\eta\|
\,\,\,\,\,\,\,\,\,\,(m\in M_\varpi,\,\,\eta\in \ker\big(\Phi(m)\big)),
\end{equation}
where perpendicularity is to $\mathrm{span}\big(\Phi(m)\big)$ in $\mathfrak{t}^*$.
Thus, (\ref{eqn:norm-of-difference}) and (\ref{eqn:bound-phi-perp}) imply
\begin{equation}
\label{eqn:norm-of-difference-expl}
\big\|t\Phi(m_k)-\varpi\big\|\ge D'\,k^{\epsilon_1-1/2},
\end{equation}
since we may assume $t$ bounded away from $0$.

One can now argue by integration by parts in $d\vartheta$ that
$\widetilde{\Pi}_{k\varpi}(x_k,x_k)=O\left(k^{-\infty}\right)$, a contradiction.

Thus if $x_k\in X$ is a sequence with
$\mathrm{dist}_X(x_k,X_\varpi)>C_1\,k^{\epsilon_1-1/2}$, then
$\widetilde{\Pi}_{k\varpi}(x_k,x_k)=O\left(k^{-\infty}\right)$. If
$x_k,y_k\in X$ and $\mathrm{dist}_X(x_k,X_\varpi)>C_1\,k^{\epsilon_1-1/2}$ then
$$
\left|\widetilde{\Pi}_{k\varpi}(x_k,y_k)\right|\le \sqrt{\widetilde{\Pi}_{k\varpi}(x_k,x_k)}\cdot
\sqrt{\widetilde{\Pi}_{k\varpi}(y_k,y_k)}=O\left(k^{-\infty}\right),
$$
since the first factor on the right hand side is $O\left(k^{-\infty}\right)$.
\end{proof}

\section{Proof of Theorem \ref{thm:scaling-limit-general-precise}.}

\begin{proof}
Let us set $x_{jk}=x+\upsilon_j/\sqrt{k}$. Then
\begin{equation}
\label{eqn:equiv-proj-general-case-rescaled}
\widetilde{\Pi}_{k\varpi}(x_{1k},x_{2k})=
\frac{1}{(2\pi)^{\mathrm{g}}}\,\int_{-\pi}^\pi\cdots\int_{-\pi}^\pi
e^{-ik\vartheta\cdot \varpi}\,\widetilde{\Pi}\left(\widetilde{\mu}_{-\vartheta}(x_{1k}),x_{2k}\right)\,d\vartheta.
\end{equation}
Following the argument leading to (\ref{eqn:variuospieces-general}), and adapting the proof
of Lemma \ref{lem:S-k''}, we get
for some $\epsilon>0$
\begin{eqnarray}
\label{eqn:variuospieces-general-k-g}
\lefteqn{\widetilde{\Pi}_{k\varpi}(x_{1k},x_{2k})\sim\frac{k^{1-\mathrm{g}/2}}{(2\pi)^{\mathrm{g}}}
\cdot \sum_{e=1}^{r_m}e^{-ik\,\mathbf{b}_e\cdot \varpi}}\\
&&\cdot\,
\int_{1/2C}^{2C}\int_{\mathbb{R}^{\mathrm{g}}} e^{ik\Psi_k^{(e)}(x,t,\eta)}\,
s\left(k t,\widetilde{\mu}_{-\eta/\sqrt{k}-\mathbf{b}_e}(x_{1k}),x_{2k}\right)\,f\left(k^{-1/9}\,\|\eta\|\right)\,dt\,d\eta,
\nonumber
\end{eqnarray}
where now
\begin{equation}
\label{eqn:defn-phase-general-g}
\Psi^{(e)}_k(x,t,\eta)=:t\,\psi\left(\widetilde{\mu}_{-\eta/\sqrt{k}-\mathbf{b}_e}(x_{1k}),x_{2k}\right)
-\frac{1}{\sqrt{k}}\,\eta\cdot \varpi.
\end{equation}
Integration is $d\eta$ over a ball of radius $k^{1/9}$ in $\mathbb{R}^\mathrm{g}$.
In view of Corollary \ref{cor:yet-more-refined-local-estimate}, as in the derivation
of (\ref{eqn:action-expanded-1}) we obtain
\begin{eqnarray}
\label{eqn:action-expanded-g}
\lefteqn{\widetilde{\mu}_{-\eta/\sqrt{k}-\mathbf{b}_e}(x_{1k})}\\
&=&x+\left( \frac{1}{\sqrt{k}}\,\big(\eta\cdot\Phi(m)+\theta_1\big)+\frac{1}{k}\,\omega_m\big(\eta\cdot \xi_M(m),\mathbf{v}_1^{(e)}\big)
+B_3\left(\frac{\eta}{\sqrt{k}},\frac{\mathbf{v}}{\sqrt{k}}\right),\right.
\nonumber\\
&&\left.+\frac{1}{\sqrt{k}}\,\left(\mathbf{v}_1^{(e)}-\eta\, \xi_M(m)\right)+B_2\left(\frac{\eta}{\sqrt{k}},
\frac{\mathbf{v}}{\sqrt{k}}\right)\right),\nonumber
\end{eqnarray}
where now $B_j$ denotes a smooth function vanishing to
$j$-th order at the origin in $\mathbb{R}^{\mathrm{g}}\times \mathbb{C}^\mathrm{d}$.

Let us write $\mathfrak{t}=\mathrm{span}(\xi_1)\oplus \ker\big(\Phi(m)\big)$, where
$\xi_1\in \ker\big(\Phi(m)\big)^\perp$ has unit norm, and let
$(\xi_2,\cdots,\xi_\mathrm{g})$ be an orthonormal basis of $\ker\big(\Phi(m)\big)$;
assume without loss that $\langle\Phi(m)\xi_1\rangle =\|\Phi(m)\|$.
If $\eta=\sum_j\eta_j\xi_j=\eta_1\xi_1+\eta'$, then $\eta\cdot \Phi=\langle\Phi(m),\xi_1\rangle=\eta_1\,\|\Phi(m)\|$
and $\eta\cdot \varpi=\eta_1\,\varpi_1=\eta_1\,\|\varpi\|$.

Inserting this in (\ref{eqn:action-expanded-g}), we obtain from (\ref{eqn:variuospieces-general-k-g})
$\widetilde{\Pi}_{k\varpi}(x_{1k},x_{2k})\sim \sum_{e=1}^{r_m}S_e(k)$, where now the analogue of
(\ref{eqn:S-e-rescaled-1}) is
\begin{eqnarray}
\label{eqn:S-e-rescaled-g}
\lefteqn{S_e(k)\sim \frac{1}{(2\pi)^\mathrm{g}}\,e^{-ik\mathbf{b}_e}\,k^{1-\mathrm{g}/2}}\\
&&\cdot \int_{\mathbb{R}^{\mathrm{g}-1}}\left[\int_{1/2C}^{2C}\int_{-C\,k^{1/9}}^{Ck^{1/9}}e^{i\sqrt{k}\,\Upsilon(t,\eta_1,\theta)}\,
e^{G(m,\theta,\mathbf{v},\upsilon)}\,S_k(m,\theta,\mathbf{v},\upsilon)\,\mathrm{d}t\,\mathrm{d}\eta_1\right]\,d\eta',
\nonumber
\end{eqnarray}
with
\begin{equation}
\label{eqn:upsilon-g}
\Upsilon(t,\eta_1,\theta)=:t\,\big(\eta_1\,\|\Phi(m)\|+\theta_1-\theta_2\big)-\eta_1\,\|\varpi\|,
\end{equation}
and
\begin{eqnarray*}
\lefteqn{G(m,\eta,\mathbf{v},\upsilon)=:-t\,\big(\eta_1\,\|\Phi(m)\|+\theta_1-\theta_2\big)^2}\\
&&+it\,\omega_m\left(\eta\cdot\xi_M(m),\mathbf{v}_1^{(e)}\right)+t\,\psi_2\left(\mathbf{v}_1^{(e)}-\eta\cdot \xi_M(m),\mathbf{v}_2\right)\,e^{i\,A_{ek}(\eta,\upsilon_1,\upsilon_2)};
\end{eqnarray*}
we have $\Re (G)\le -c\,\|\eta\|^2$ for some $c>0$.

The inner integral in (\ref{eqn:S-e-rescaled-g}) may be estimated as in the last part of the proof
of Theorem \ref{thm:circle-case}. We end up with an asymptotic expansion for $S_e(k)$ in descending powers of
$k^{1/2}$, and an $N$-th step remainder bounded by $C_N\,e^{-c\,\|\eta'\|^2}\,k^{-aN}$ for some
$a>0$ (and the integration takes place over a ball of radius $O\left(k^{1/9}\right)$); the constant $a$ is uniformly
bounded away from $0$ when we restrict $\upsilon$ to some vector subspace of $T_xX$ having zero intersection to
$\mathfrak{t}_X(x)$.
The leading order term is
\begin{eqnarray}
\label{eqn:leading-term-with-integral}
\lefteqn{\frac{1}{(2\pi)^{\mathrm{g}-1}}\,k^{\mathrm{d}+(1-\mathrm{g})/2}e^{-ik\,\mathbf{b}_e\cdot \varpi-i\sqrt{k}\,(\theta_1-\theta_2)/\|\Phi(m)\|}\,
}\\
&&\cdot \left(\frac{\lambda_\varpi(m)}{\pi}\right)^\mathrm{d}\,\frac{1}{\|\Phi(m)\|}\,
\cdot\int_{\mathbb{R}^{\mathrm{g}-1}} e^{G_0(\eta',\upsilon_1,\upsilon_2)}\,d\eta',   \nonumber
\end{eqnarray}
where, setting $\eta'\cdot \xi'=:\sum_{j=2}^\mathrm{\mathrm{g}}\eta_j\,\xi_j$,
\begin{eqnarray*}
G_0(\upsilon_1,\upsilon_2)&=:&\lambda_\varpi(m)\,\left[i\,\omega_m\left(\frac{\theta_2-\theta_1}{\|\Phi(m)\|}\,\xi_{1M}(m)+\eta'\cdot\xi'_M(m),
\mathbf{v}_1^{(e)}\right)\right.\\
&&\left.+
\psi_2\left(
\mathbf{v}_1^{(e)}-
\left(\frac{\theta_2-\theta_1}{\|\Phi(m)\|}\,\xi_{1M}(m)+\eta'\cdot\xi'_M(m)\right),
\mathbf{v}_2\right)\right].
\end{eqnarray*}
If $\upsilon_j=(0,\mathbf{v}_j)$ with $\mathbf{v}_j\in N_m$ for $j=1,2$, (\ref{eqn:leading-term-with-integral}) specializes to
\begin{eqnarray}
\label{eqn:leading-term-with-integral-0}
\frac{k^{\mathrm{d}+(1-\mathrm{g})/2}}{(2\pi)^{\mathrm{g}-1}}\,e^{-ik\,\mathbf{b}_e\cdot \varpi}\,
\cdot \left(\frac{\lambda_\varpi(m)}{\pi}\right)^\mathrm{d}\,\frac{1}{\|\Phi(m)\|}\,
\cdot\int_{\mathbb{R}^{\mathrm{g}-1}} e^{G_0(\eta',\upsilon)}\,d\eta',
\end{eqnarray}
where
\begin{eqnarray*}
G_0(\upsilon)&=:&\lambda_\varpi(m)\,\left[i\,\omega_m\left(\eta'\cdot\xi'_M(m),
\mathbf{v}_1^{(e)}\right)+
\psi_2\left(
\mathbf{v}_1^{(e)}-\eta'\cdot\xi'_M(m),
\mathbf{v}_2\right)\right]\\
&=&\lambda_\varpi(m)\,\left[-i\,\omega_m\left(\mathbf{v}_1^{(e)},\mathbf{v}_2\right)-\frac 12\,\|\mathbf{v}_1^{(e)}-\mathbf{v}_2\|^2\right]\\
&&+\lambda_\varpi(m)\,\left[i\,
\omega_m\left(\eta'\cdot\xi'_M(m),\mathbf{v}_1^{(e)}+\mathbf{v}_2\right)-\frac 12
\left\|\eta'\cdot\xi'_M(m)\right\|^2\right].
\end{eqnarray*}
With the coordinate change $\nu=\sqrt{\lambda_\varpi(m)}\cdot\eta'$, and using that
$\omega_m(\mathbf{u},\mathbf{w})=-g_m\big(\mathbf{u},J_m(\mathbf{w})\big)$, we obtain
\begin{eqnarray*}
\lefteqn{\int_{\mathbb{R}^{\mathrm{g}-1}} e^{\lambda_\varpi(m)\,\left[i\,
\omega_m\left(\eta'\cdot\xi'_M(m),\mathbf{v}^{(e)}_1+\mathbf{v}_2\right)-\frac 12
\left\|\eta'\cdot\xi'_M(m)\right\|^2\right]}\,d\eta'}\\
&=&\lambda_\varpi(m)^{-(\mathrm{g}-1)/2}\int_{\mathbb{R}^{\mathrm{g}-1}} e^{
-i\,g_m\left(\nu\cdot\xi'_M(m),\sqrt{\lambda_\varpi(m)}\cdot(\mathbf{v}_1^{(e)}+\mathbf{v}_2)\right)-\frac 12
\left\|\nu\cdot\xi'_M(m)\right\|^2}\,d\nu.
\end{eqnarray*}
Now recall that $(\xi_2',\ldots,\xi'_{\mathrm{g}})$ is an orthonormal basis of $\ker\big(\Phi(m)\big)$,
while $\|\cdot\|$, the standard Euclidean norm on $\mathbb{R}^{\mathrm{g}-1}$, represents in Heisenberg local
coordinates the restriction to
$\mathrm{ev}_m\big(\ker\big(\Phi(m)\big)\big)\cong N_m\subseteq T_mM$ of the Riemannian norm
$\|\cdot \|_m$. Given this, we have
\begin{eqnarray*}
\lefteqn{\lambda_\varpi(m)^{-(\mathrm{g}-1)/2}\int_{\mathbb{R}^{\mathrm{g}-1}} e^{
-i\,g_m\left(\nu\cdot\xi'_M(m),\sqrt{\lambda_\varpi(m)}\cdot(\mathbf{v}_1^{(e)}+\mathbf{v}_2)\right)-\frac 12
\left\|\nu\cdot\xi'_M(m)\right\|^2}\,d\nu}\\
&=&\lambda_\varpi(m)^{-(\mathrm{g}-1)/2}\cdot \frac{1}{\mathcal{D}(m)}\cdot
(2\pi)^{(\mathrm{g}-1)/2}\cdot e^{-\lambda_\varpi(m)\,\|\mathbf{v}_1^{(e)}+\mathbf{v}_2\|^2/2}.
\end{eqnarray*}
Summing up, we obtain for $S_e(k)$ an asymptotic expansion in descending powers of $k^{1/2}$, with leading order term
\begin{eqnarray*}
\lefteqn{\frac{1}{(\sqrt{2}\pi)^{\mathrm{g}-1}}\cdot \left(\lambda_\varpi(m)\cdot\frac{k}{\pi}\right)^{\mathrm{d}+(1-\mathrm{g})/2}}\\
&&\cdot \frac{e^{-ik\mathbf{b}_e\cdot \varpi}}{\|\Phi(m)\|\,\mathcal{D}(m)}\cdot
e^{-i\,\lambda_\varpi(m)\,\omega_m\left(\mathbf{v}_1^{(e)},\mathbf{v}_2\right)-\lambda_\varpi(m)\,
\big(\|\mathbf{v}_1^{(e)}\|^2+\|\mathbf{v}_2\|^2\big)}.
\end{eqnarray*}
Notice that for $\mathbf{v}_1=\mathbf{v}_2=0$ we recover the leading term of Theorem
\ref{thm:higher-dim-case} (where the corresponding expansion whose proved to be in descending powers of $k$).
\end{proof}

\subsection{Proof of Corollary \ref{cor:asymp-bound-dim-gen-case}}

\begin{proof}
Under our assumptions,
$M_\varpi=\Phi^{-1}(\mathbb{R}_+\cdot \varpi)$ is a compact submanifold
of $M$, of (real) codimension $\mathrm{g}-1$. Let us set $X_\varpi=:\pi^{-1}(M_\varpi)$.

For any $x\in X_\varpi$, we can find a neighborhood $U$ of $x$ in $X_\varpi$ and
a smoothly varying family of Heisenberg local coordinates centered at $x'\in U$,
that we shall denote $\gamma_{x'}(\theta,\mathbf{v})=x'+(\theta,\mathbf{v})$
(\S \ref{subsect:heis}). We may assume without loss that
$$
r_\beta(x')+(\theta,\mathbf{v})=x'+(\beta+\theta,\mathbf{v})
$$
where defined, and that $\gamma_{x'}$
is adjusted to $X_\varpi$, in the following sense.

For any $x'\in X_\varpi$, we have
$T_{x'}X=T_{x'}X_\varpi\oplus N_{x'}$, where $N_{x'}$ is the normal space of $X_\varpi$
in $X$ at $x'$; $N_{x'}$ is naturally unitarily isomorphic to the normal space of
$M_\varpi$ in $M$ at $m'=\pi(x')$. We may then assume that, under the unitary isomorphisms
$T_{x'}X\cong \mathbb{R}\oplus \mathbb{R}^{2\mathrm{d}}\cong
\left(\mathbb{R}\oplus \mathbb{R}^{2\mathrm{d}-(\mathrm{g}-1)}\right)\oplus \mathbb{R}^{\mathrm{g}-1}$
induced by $\gamma_{x'}$,
$$
T_{x'}X'_\varpi\cong \mathbb{R}\oplus \mathbb{R}^{2\mathrm{d}-(\mathrm{g}-1)},\,\,\,
N_{x'}\cong \mathbb{R}^{\mathrm{g}-1}.
$$

If $x'\in U$ and $\mathbf{v}\in \mathbb{R}^{\mathrm{g}-1}$ is suitably small, set
$x'+\mathbf{v}=:\gamma_{x'}\big(0,(\mathbf{0},\mathbf{v})\big)$.
For some $\delta>0$,
the map
$$
(x',\mathbf{v})\in U\times B_{\mathrm{g}-1}(\mathbf{0};2\delta)\mapsto x'+\mathbf{v}\in X
$$
is a diffeomorphism onto its image ($B_{\mathrm{g}-1}(\mathbf{0};\varepsilon)$ is the open ball of center the origin
and radius $\varepsilon$ in $\mathbb{R}^{\mathrm{g}-1}$).
We may assume without loss that $U$ is (the image of) a coordinate chart $x=x(u)$
for $X_\varpi$, and
let $du$ be the standard measure in local coordinates on $U$. Then under the previous
diffeomorphism  $(u,\mathbf{v})$
are local coordinates on $X$ centered at $x$, and
$dV_X=\mathcal{V}\big(x(u),\mathbf{v}\big)\,du\,d\mathbf{v}$ for a smooth positive function
$\mathcal{V}$.
By construction, and definition of
Heisenberg local coordinates, the volume form on $X_\varpi$, expressed
in the local coordinates on $U$, is $dV_{X_\varpi}=\mathcal{V}\big(x(u),\mathbf{0}\big)\,du$.

Let $(V_a)$ be a finite open cover of $M_\varpi$, such that the previous construction can be carried out on
each $U_a=:\pi^{-1}(U_a)$. Also, let $(\gamma_a)$ be a partition of unity on $M_\varpi$ subordinate to
$(V_a)$, implicitly pulled back to $X_\varpi$.

Finally let $\varsigma$ be a smooth bump function on $\mathbb{R}^{\mathrm{g}-1}$, vanishing for
$\|\mathbf{v}\|\ge 2\delta$, and identically equal to $1$ for $\|\mathbf{v}\|\le \delta$.

We may assume that for any $a$ and $x\in U_a$,
$x+\mathbf{v}$ is at distance $\ge \|\mathbf{v}\|/2$ from $X_\varpi$
if $\mathbf{v}\in \mathbb{R}^{\mathrm{g}-1}$, $\|\mathbf{v}\|\le \delta$.
Since $\widetilde{\Pi}_{k\varpi}(x,x)=O\left(k^{-\infty}\right)$ as $k\rightarrow +\infty$ when $x\not\in
X_\varpi$, perhaps after neglecting a negligible contribution integration may be restricted to
a tubular neighborhood of $X_\varpi$. Therefore as $k\rightarrow +\infty$
\begin{eqnarray}
\label{eqn:integrazione-su-X_r}
\int_{X_r}\widetilde{\Pi}_{k\varpi}(x,x)\,dV_X(x)&\sim&
\sum_a\int_{U_a}\gamma_a\big(x(u)\big)\,P_{ka}\big(x(u)\big)\,du\nonumber\\
&=&\sum_a\int_{U_a}\gamma_a(x)\,\frac{P_{ka}(x)}{\mathcal{V}_a\big(x,\mathbf{0}\big)}\,dV_{X_\varpi}(x)
\end{eqnarray}
where, for $x\in U_a$,
\begin{equation}
\label{eqn:pka}
P_{ka}\big(x\big)=:\int_{B(\mathbf{0};2\delta)}\varsigma(\mathbf{v})\,\widetilde{\Pi}_{k\varpi}
\big(x+\mathbf{v},x+\mathbf{v}\big)\,\mathcal{V}_a(x,\mathbf{v})\,d\mathbf{v};
\end{equation}
the index $a$ appears in the right hand side of (\ref{eqn:pka}) through the moving system of
Heisenberg local coordinates on $U_a$, and
$\mathcal{V}_a(x,\mathbf{0})\,du=dV_{X_\varpi}\big(x(u)\big)$.

Let us now study the asymptotics of $P_{ka}\big(x\big)$.
In view of Theorem \ref{thm:scaling-limit-general}, only a rapidly decaying contribution is lost
if the integrand in (\ref{eqn:pka}) is multiplied by a cut-off of the form
$\gamma_k(\mathbf{v})=:\gamma \left(k^{1/2-\epsilon_1}\|\mathbf{v}\|\right)$, where $\gamma$ is some compactly supported bump function
identically equal to $1$ in a neighborhood of $0$. If we now adopt the rescaling
$\mathbf{v}\rightsquigarrow \mathbf{v}/\sqrt{k}$, we end up with
\begin{eqnarray}
\label{eqn:pka-estimate}
\lefteqn{P_{ka}\big(x\big)=k^{-(\mathrm{g}-1)/2}}\\
&&\cdot\int_{\mathbb{R}^{\mathrm{g}-1}}\gamma_k(\mathbf{v})\,\widetilde{\Pi}_{k\varpi}
\left(x+\frac{\mathbf{v}}{\sqrt{k}},x+\frac{\mathbf{v}}{\sqrt{k}}\right)\,\mathcal{V}_a\left(x,\frac{\mathbf{v}}{\sqrt{k}}\right)
\,d\mathbf{v}.\nonumber
\end{eqnarray}
Integration is now over a ball of radius $O\left(k^{\epsilon_1}\right)$.

Using Theorem \ref{thm:scaling-limit-general-precise}, and then integrating in $d\mathbf{v}$, we obtain
for $P_{ka}\big(x\big)$ an asymptotic expansion in descending powers of $k^{1/2}$,
whose leading term is generically given by
$$
\frac{1}{(2\pi)^{\mathrm{g}-1}}\,\left(\|\varpi\|\cdot \frac k\pi\right)^{\mathrm{d}+1-\mathrm{g}}
\cdot \frac{1}{\mathcal{D}(m)}\,\left(\frac{1}{\|\Phi(m)\|}\right)^{\mathrm{d}+2-\mathrm{g}}.
$$
One can then argue as in the proof of Corollary \ref{cor:dim-circle-case}.

\end{proof}

\end{document}